\newtheoremstyle{mythm}
{.5\baselineskip}	
{.5\baselineskip}	
{}		
{}		
{\bf}	
{.}		
{ }		
{}		
\theoremstyle{mythm}
\newtheorem{theorem}{Theorem}	
\newtheorem{lemma}[theorem]{Lemma}
\newtheorem{proposition}[theorem]{Proposition}
\newtheorem{corollary}[theorem]{Corollary}
\newtheorem{definition}[theorem]{Definition}
\newtheorem{example}[theorem]{Example}
\newtheorem{remark}[theorem]{Remark}
\newtheorem{question}{Question}
\title{Translation Results for some Selection Games with Minimal Cusco Maps}
\author{Christopher Caruvana}
\address{School of Sciences\\
Indiana University Kokomo\\
2300 S. Washington Street, Kokomo, IN 46902 USA}
\email{chcaru@iu.edu}
\urladdr{https://chcaru.pages.iu.edu/}
\author{Jared Holshouser}
\address{Independent Researcher}
\email{jholshouser1321@gmail.com}
\urladdr{https://jaredholshouser.github.io/}
\subjclass{91A44, 54C60, 54D20, 54C35, 54B20}
\keywords{minimal cusco maps, topological selection principles, topology of uniform convergence on compacta, Rothberger property, countable fan-tightness}
\date{\today}
\begin{document}

\begin{abstract}
  We establish relationships between various topological selection games involving the space of minimal cusco maps into the real line and the underlying domain.
  These connections occur across different topologies, including the topology of pointwise convergence and the topology of uniform convergence on compacta.
  Full and limited-information strategies are investigated.
  The primary games we consider are Rothberger-like games, generalized point-open games, strong fan-tightness games, Tkachuk's closed discrete selection game,
  and Gruenhage's \(W\)-games.
  We also comment on the difficulty of generalizing the given results to other classes of functions.
\end{abstract}

\maketitle

\section{Introduction}

Minimal upper-semicontinuous compact-valued functions have a rich history. The topic can be traced back to the study of holomorphic functions and cluster sets, see \cite{ClusterSetsBook}.
The phrase \emph{minimal usco} was coined by Christensen \cite{Christensen1982}, where a topological game similar to the Banach-Mazur game was considered.
When the codomain is a linear space, the term \emph{cusco map} refers to usco maps which are convex-valued.
Usco and cusco maps have been objects of study since they provide insights into the underlying topological properties of the convex subdifferential and the Clarke generalized gradient \cite{BorweinZhu}.
In this paper, using some techniques similar to those of Hol{\'{a}} and Hol{\'{y}} \cite{HolaHoly2022,HolaHoly2023},
we tie connections between a space \(X\) and the space of minimal cusco maps with the topology of uniform convergence on
certain kinds of subspaces of \(X\). The results are analogous to those of of \cite{CCUsco} and similar in spirit to those appearing in \cite{ClontzHolshouser,CHCompactOpen,CHContinuousFunctions}; in particular,
most of the results come in the form of selection game equivalences or dualities,
which rely on a variety of game-related results, referenced when needed.
In many contexts of interest, we see that, pertaining to the properties investigated herein, the spaces of minimal usco maps, minimal cusco maps, and continuous real-valued functions behave similarly.

Consequences of these results include Corollary \ref{cor:ParticularMetrizability}, which captures \cite[Cor 4.5]{HolaHoly2023}: a space
\(X\) is hemicompact if and only if \(\mathrm{MC}_k(X)\), the space of minimal cusco maps into \(\mathbb R\) on \(X\) with the topology of uniform convergence on compact subsets, is metrizable.
Corollary \ref{cor:ParticularMetrizability} also shows that \(X\) is hemicompact if and only if \(\mathrm{MC}_k(X)\) is not discretely selective.
Corollary \ref{cor:BigRothberger} contains the assertion that \(X\) is \(k\)-Rothberger if and only if \(\mathrm{MC}_k(X)\) has strong countable fan-tightness at \(\mathbf 0\), the constant \(\{0\}\) function.

\section{Preliminaries}

We use the word \emph{space} to mean \emph{topological space}.
Any undefined notions and terminologies are as in \cite{Engelking} or \cite{LinearTopologicalSpaces}.
Unless otherwise stated, all spaces considered are assumed to be Hausdorff.
When the parent space is understood from context, we use the notation
\(\mathrm{int}(A)\) and \(\mathrm{cl}(A)\) for
the interior and closure of \(A\), respectively.
If we must specify the topological space \(X\), we use \(\mathrm{int}_X(A)\) and
\(\mathrm{cl}_X(A)\).

Given a function \(f : X \to Y\), we denote the graph of \(f\) by
\(\mathrm{gr}(f) = \{ \langle x , f(x) \rangle : x \in X \}\).
For a set \(X\), we let \(\wp(X)\) denote the set of subsets of \(X\) and
\(\wp^+(X) = \wp(X) \setminus \{\emptyset\}\).
For sets \(X\) and \(Y\), we let \[\mathrm{Fn}(X,Y) = \bigcup_{A \in \wp^+(X)} Y^A;\]
that is, \(\mathrm{Fn}(X,Y)\) is the collection of all \(Y\)-valued functions
defined on non-empty subsets of \(X\).

When a set \(X\) is implicitly serving as the parent space in context,
given \(A \subseteq X\), we will let \(\mathbf{1}_A\) be the indicator (or characteristic) function for \(A\).

For any set \(X\), we let \(X^{<\omega}\) denote the set of finite sequences of \(X\) and \([X]^{<\omega}\) denote the set of finite subsets of \(X\).
For a space \(X\), we let \(K(X)\) denote the set of all non-empty compact subsets of \(X\).
We let \(\mathbb K(X)\) denote the set \(K(X)\) endowed with the Vietoris topology;
that is, the topology with basis consisting of sets of the form
\[[U_1,U_2,\ldots, U_n]
= \left\{ K \in \mathbb K(X) : K \subseteq \bigcup_{j=1}^n U_j \wedge K^n \cap \prod_{j=1}^n U_j \neq \emptyset \right\}.\]
For more about this topology, see \cite{MichaelSubsets}.

A family \(\mathcal A\) of subsets of a set \(X\) is a \emph{bornology} \cite{BornologyBook} if
\(X = \bigcup \mathcal A\), \(A \cup B \in \mathcal A\) for all \(A, B \in \mathcal A\), and,
for each \(A \in \mathcal A\), \(B \subseteq A \implies B \in \mathcal A\).
We will be interested in certain kinds of bases for bornologies, which we refer to as ideals of closed sets
since the conditions for being a bornology are similar to those of ideals.
\begin{definition}
    For a space \(X\), we say that a family \(\mathcal A \subseteq \wp^+(X)\) of closed sets is an
    \emph{ideal of closed sets} if
    \begin{itemize}
      \item
      for \(A , B \in \mathcal A\), \(A \cup B \in\mathcal A\);
      \item
      for \(A \in \mathcal B\), if \(B \subseteq A\) is closed, then \(B \in \mathcal A\); and
      \item
      for every \(x \in X\), \(\{x\} \in \mathcal A\).
    \end{itemize}
\end{definition}

Throughout, we will assume that any ideal of closed sets under consideration doesn't contain
the entire space \(X\).
Two ideals of closed sets of primary interest are
the collection of non-empty finite subsets of an infinite space \(X\) and
the collection of non-empty compact subsets of a non-compact space \(X\).

\subsection{Selection Games}
Topological games have a long history, much of which can be gathered from Telg{\'a}rsky's survey
\cite{TelgarskySurvey}.
In this paper, we will be dealing only with single-selection games of countable length.

\begin{definition}
    Given sets \(\mathcal A\) and \(\mathcal B\), we define the \emph{single-selection game}
    \(\mathsf G_1(\mathcal A, \mathcal B)\) as follows.
    For each \(n\in\omega\), One chooses \(A_n \in \mathcal A\) and Two responds with \(x_n \in A_n\).
    Two is declared the winner if \(\{ x_n : n \in \omega \} \in \mathcal B\).
    Otherwise, One wins.
\end{definition}

The study of games naturally inspires questions about the existence of various kinds of strategies.
Infinite games and corresponding full-information strategies were both introduced in \cite{GaleStewart}.
Some forms of limited-information strategies came shortly after, like positional (also known as stationary) strategies \cite{DavisGames,SchmidtGames}.
For more on stationary and Markov strategies, see \cite{GalvinTelgarsky}.
For the strategy types to be used in this paper, along with associated notation, we refer the reader to
\cite[Def 1.3]{CCUsco}.

Selection games and selection principles are intimately related.
For more details on selection principles and relevant references,
see \cite{ScheepersI,KocinacSelectedResults}.

Since this paper will focus on single-selection games of a countable length,
we only recall single-selection principles of a countable length.
\begin{definition}
    Let \(\mathcal A\) and \(\mathcal B\) be collections.
    The \emph{single-selection principle} \(\mathsf{S}_1(\mathcal A, \mathcal B)\) for a space \(X\) is the following property.
    Given any \(A \in \mathcal A^\omega\), there exists \(\vec x \in \prod_{n\in\omega} A_n\) such that
    \(\{ \vec{x}_n : n \in \omega \} \in \mathcal B\).
\end{definition}
As mentioned in \cite[Prop 15]{ClontzDualSelection}, \(\mathsf{S}_1(\mathcal A, \mathcal B)\) holds if and only if
\(\mathrm{I} {\not\uparrow}_{\mathrm{pre}}\mathsf{G}_1(\mathcal{A},\mathcal{B})\).
Hence, we may establish equivalences between certain selection principles by addressing the corresponding
selection games.

\begin{definition}
  For a space \(X\), an open cover \(\mathscr U\) of \(X\) is said to be \emph{non-trivial}
  if \(\emptyset \not\in \mathscr U\) and \(X \not\in \mathscr U\).
\end{definition}
Common cover types that appear in selection principle theory are \(\omega\)-covers and \(k\)-covers, whose definitions will be recalled below.
These cover types are generalized in the following definition.
\begin{definition}
  Let \(X\) be a space and \(\mathcal A\) be a set of closed subsets of \(X\).
  We say that a non-trivial cover \(\mathscr U\) of \(X\) is an \emph{\(\mathcal A\)-cover}
  if, for every \(A \in \mathcal A\), there exists \(U \in \mathscr U\) so that
  \(A \subseteq U\).
\end{definition}

Though \(\mathcal A\)-covers were used in \cite{CHCompactOpen} with the
notation \(\mathcal O(X,\mathcal A)\) referring to the collection of all such covers,
these covers didn't receive the name of \(\mathcal A\)-covers until \cite{CHHyperspaces}.
Also, \(\mathcal A\)-covers were independently defined as \(\omega_{\mathcal I}\)-covers and studied in \cite{LopezCallejasCruzCastillo}
where the authors investigate Ramsey-like properties.
However, these notions are not new and the essential idea appears as early as 1975 in \cite{Telgarsky1975}
where Telg{\'{a}}sky defines \(\mathbf K\)-covers relative to any collection of sets \(\mathbf K\).

For a collection \(\mathcal A\), we use \(\neg \mathcal A\) to denote the collection
of sets which are not in \(\mathcal A\).
Throughout this paper, we employ commonly used notation for cover types and other collections
of topological objects.
For an explicit list, we refer the reader to \cite[Def 1.7]{CCUsco}.

Note that, in our notation, \(\mathcal O_X([X]^{<\omega})\) is the set of all \(\omega\)-covers of \(X\),
which we will denote by \(\Omega_X\), and that \(\mathcal O_X(K(X))\) is the set of all \(k\)-covers of \(X\),
which we will denote by \(\mathcal K_X\).
We also use \(\Gamma_\omega(X)\) to denote \(\Gamma_X([X]^{<\omega})\) and \(\Gamma_k(X)\) to denote
\(\Gamma_X(K(X))\).

Note that \(\mathsf{S}_{1}(\mathcal O_X, \mathcal O_X)\) is the Rothberger property
and \(\mathsf G_1(\mathcal O_X, \mathcal O_X)\) is the Rothberger game.
If we let \(\mathbb P_X = \{ \mathscr N_{X,x} : x \in X \}\), then
\(\mathsf G_1(\mathbb P_X, \neg \mathcal O)\) is a rephrasing of the point-open game studied by Galvin \cite{Galvin1978}
and Telg{\'{a}}rsky \cite{Telgarsky1975}.
The games \(\mathsf G_1(\mathscr N_{X,x} , \neg \Gamma_{X,x})\) and \(\mathsf G_1(\mathscr N_{X,x} , \neg \Omega_{X,x})\)
are two variants of Gruenhage's \(W\)-game (see \cite{Gruenhage1976}).
We refer to \(\mathsf G_1(\mathscr N_{X,x} , \neg \Gamma_{X,x})\) as Gruenhage's converging \(W\)-game
and \(\mathsf G_1(\mathscr N_{X,x} , \neg \Omega_{X,x})\) as Gruenhage's clustering \(W\)-game.
The games \(\mathsf G_1(\mathscr T_X, \neg \Omega_{X,x})\) and \(\mathsf G_1(\mathscr T_X, \mathrm{CD}_X)\) were introduced
by Tkachuk (see \cite{Tkachuk2018,TkachukTwoPoint}) and tied to Gruenhage's \(W\)-games in \cite{TkachukTwoPoint,ClontzHolshouser}.
The strong countable dense fan-tightness game at \(x\) is \(\mathsf G_1(\mathscr D_X , \Omega_{X,x} )\)
and the strong countable fan-tightness game at \(x\) is \(\mathsf G_1(\Omega_{X,x} , \Omega_{X,x} )\) (see \cite{BarmanDown2011}).

Since we work in contexts which include both full- and limited-information strategies, we incorporate this context into our usage of the game relations
of \emph{equivalence} and \emph{duality}.
For the explicit details, we refer the reader to Definitions 1.9 and 1.11 in \cite{CCUsco}.
Accordingly, we will use the notation \(\mathcal G \equiv \mathcal H\) to denote that two selection games \(\mathcal G\)
and \(\mathcal H\) are equivalent.
\begin{definition}[{\cite{CHContinuousFunctions}}]
  Given selection games \(\mathcal G\) and \(\mathcal H\), we say that \(\mathcal G \leq_{\mathrm{II}} \mathcal H\) if the following implications hold:
  \begin{itemize}
    \item
    \(\mathrm{II} \uparrow_{\mathrm{mark}} \mathcal G \implies \mathrm{II} \uparrow_{\mathrm{mark}} \mathcal H\)
    \item
    \(\mathrm{II} \uparrow \mathcal G \implies \mathrm{II} \uparrow \mathcal H\)
    \item
    \(\mathrm{I} \not\uparrow \mathcal G \implies \mathrm{I} \not\uparrow \mathcal H\)
    \item
    \(\mathrm{I} {\not\uparrow}_{\mathrm{pre}} \mathcal G \implies \mathrm{I} {\not\uparrow}_{\mathrm{pre}} \mathcal H\)
  \end{itemize}
\end{definition}
Note that \(\leq_{\mathrm{II}}\) is transitive and that if \(\mathcal G \leq_{\mathrm{II}} \mathcal H\) and \(\mathcal H \leq_{\mathrm{II}} \mathcal G\), then \(\mathcal G \equiv \mathcal H\).
For another important comment on how these notions of equivalence and duality interact, see \cite[Lemma 1.12]{CCUsco}.

Certain general game dualities, as corollaries to \cite[Cor 26]{ClontzDualSelection}, which will be used below are
recorded in \cite[Lemma 1.13]{CCUsco}.

We now state the translation theorem we will be using to establish some game equivalences.
\begin{theorem}[{\cite[Theorem 12]{CHContinuousFunctions}}]\label{TranslationTheorem}
    Let \(\mathcal A\), \(\mathcal B\), \(\mathcal C\), and \(\mathcal D\) be collections.
    Suppose there are functions \(\overleftarrow{T}_{\mathrm{I},n}:\mathcal B \to \mathcal A\) and
    \(\overrightarrow{T}_{\mathrm{II},n}: \left( \bigcup \mathcal A \right) \times \mathcal B \to \bigcup \mathcal B\)
    for each \(n \in \omega\), such that
    \begin{enumerate}[label=(\roman*)]
        \item
        if \(x \in \overleftarrow{T}_{\mathrm{I},n}(B)\), then \(\overrightarrow{T}_{\mathrm{II},n}(x,B) \in B\), and
        \item
        if \(\langle x_n : n \in \omega \rangle \in \prod_{n\in\omega} \overleftarrow{T}_{\mathrm{I},n}(B_n)\)
        and \(\{x_n : n\in\omega\} \in \mathcal C\),
        then \(\left\{\overrightarrow{T}_{\mathrm{II},n}(x_n,B_n) : n \in \omega \right\} \in \mathcal D\).
    \end{enumerate}
    Then \(\mathsf G_1(\mathcal A,\mathcal C) \leq_{\mathrm{II}} \mathsf G_1(\mathcal B, \mathcal D)\).
\end{theorem}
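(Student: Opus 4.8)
The plan is to prove this by a direct strategy-transfer argument, treating all four strategy types (full-information for Two, Markov for Two, the negations of full-information for One, and the negation of predetermined for One) in parallel, since the hypotheses are set up precisely so that a single bookkeeping scheme handles every case. First I would fix the four families of translation maps $\overleftarrow{T}_{\mathrm{I},n}$ and $\overrightarrow{T}_{\mathrm{II},n}$ and observe their basic role: a move $B_n \in \mathcal{B}$ by One in $\mathsf{G}_1(\mathcal{B},\mathcal{D})$ is pulled back to the move $\overleftarrow{T}_{\mathrm{I},n}(B_n) \in \mathcal{A}$ for One in $\mathsf{G}_1(\mathcal{A},\mathcal{C})$; any legal response $x_n \in \overleftarrow{T}_{\mathrm{I},n}(B_n)$ by Two in the $\mathcal{A}$-game is pushed forward via hypothesis (i) to a legal response $\overrightarrow{T}_{\mathrm{II},n}(x_n,B_n) \in B_n$ by Two in the $\mathcal{B}$-game. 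This gives a map on partial plays, and I would make precise that it is ``level-preserving'' (the $n$th move depends only on data available at stage $n$), which is what keeps Markov strategies Markov and predetermined strategies predetermined.

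Next I would carry out the transfer in each case. Suppose first $\mathrm{II} \uparrow \mathsf{G}_1(\mathcal{A},\mathcal{C})$ via a strategy $\tau$. Define a strategy $\tau'$ for Two in $\mathsf{G}_1(\mathcal{B},\mathcal{D})$: given a partial play $\langle B_0,\dots,B_n\rangle$, let $A_k = \overleftarrow{T}_{\mathrm{I},k}(B_k)$, inductively let $x_k = \tau(A_0,\dots,A_k)$ be Two's $\tau$-response (legal since $\tau$ is winning hence legal), and set $\tau'(B_0,\dots,B_n) = \overrightarrow{T}_{\mathrm{II},n}(x_n,B_n)$, which lies in $B_n$ by (i). In a full run, $\{x_n : n \in \omega\} \in \mathcal{C}$ because $\tau$ is winning, so by (ii) the sequence $\{\overrightarrow{T}_{\mathrm{II},n}(x_n,B_n) : n \in \omega\} \in \mathcal{D}$, i.e.\ $\tau'$ is winning. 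For the Markov case the same construction applies, but now $\tau$ depends only on $(A_n,n) = (\overleftarrow{T}_{\mathrm{I},n}(B_n),n)$, so $\tau'$ depends only on $(B_n,n)$ and is therefore Markov. For $\mathrm{I} \not\uparrow$: given a hypothetical winning strategy $\sigma'$ for One in $\mathsf{G}_1(\mathcal{B},\mathcal{D})$, I would build a strategy $\sigma$ for One in $\mathsf{G}_1(\mathcal{A},\mathcal{C})$ by composing $\sigma'$ with the translations (One applies $\sigma'$ to the translated-forward history to get $B_n \in \mathcal{B}$, then plays $\overleftarrow{T}_{\mathrm{I},n}(B_n) \in \mathcal{A}$), show any $\sigma$-play $\langle x_n\rangle$ in $\mathcal{C}$ would by (ii) produce a $\sigma'$-play in $\mathcal{D}$, contradicting that $\sigma'$ is winning for One; hence $\sigma$ defeats Two, contradicting $\mathrm{I} \not\uparrow \mathsf{G}_1(\mathcal{A},\mathcal{C})$. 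The predetermined case is the same with $\sigma'$ depending only on $n$, forcing $\sigma$ to depend only on $n$.

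The one point requiring care — and the main obstacle — is the bookkeeping in the $\mathrm{I} \not\uparrow$ and $\mathrm{I} \underset{\mathrm{pre}}{\not\uparrow}$ cases, where the direction of composition is reversed: One in the $\mathcal{B}$-game needs Two's past moves in the $\mathcal{B}$-game to feed $\sigma'$, but what is actually being played in the $\mathcal{A}$-game are Two's moves $x_k \in \overleftarrow{T}_{\mathrm{I},k}(B_k)$, so $\sigma'$ must be fed $\overrightarrow{T}_{\mathrm{II},k}(x_k,B_k)$, and one must check this recursion is well-founded — each $B_k$ is determined by $\sigma'$ applied to $\langle \overrightarrow{T}_{\mathrm{II},j}(x_j,B_j) : j < k\rangle$, which is available once $x_0,\dots,x_{k-1}$ have been played. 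Writing this induction cleanly (simultaneously defining $B_k$ from earlier $x_j$'s) is the crux; everything else is a routine unwinding of the definitions. I would present the $\mathrm{II}\uparrow$ argument in full and then remark that the other three cases follow by the analogous construction, noting explicitly that level-preservation of the translation maps is what makes the Markov-to-Markov and predetermined-to-predetermined implications go through.
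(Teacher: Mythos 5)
Your proposal is correct and is essentially the argument behind the cited result: the paper itself does not reprove Theorem \ref{TranslationTheorem} (it invokes \cite[Thm.~12]{CHContinuousFunctions}), and the proof there is exactly this strategy-translation scheme --- pushing Two's (Markov or full-information) strategies forward via $\overrightarrow{T}_{\mathrm{II},n}$ and pulling One's (predetermined or full-information) strategies back via $\overleftarrow{T}_{\mathrm{I},n}$ in the contrapositive, with conditions (i) and (ii) guaranteeing legality and the winning condition. Your identification of the simulation bookkeeping in the $\mathrm{I}\not\uparrow$ case as the only delicate point, and the observation that level-preservation keeps Markov and predetermined strategies in their classes, matches the intended proof.
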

This translation theorem essentially automates the work one would do to prove the connections between two games at all the different levels of strategy. For a quick example of how this translation result gets used, when we establish a connection between the open covers of a space \(X\) and the continuous real-valued functions on \(X\), we can have \(\overleftarrow{T}_{\mathrm{I},n}\) take in a sequence of functions, and output the cover formed by their preimages of a neighborhood around \(0\), and \(\overrightarrow{T}_{\mathrm{II},n}\) would take in an open set and a sequence of continuous functions, and output a function from the sequence whose preimage is that open set.

We will also need a separation axiom for some results in the sequel.
\begin{definition}
    Let \(X\) be a space and let \(\mathcal A\) be an ideal of closed subsets of \(X\).
    We say that \(X\) is \emph{\(\mathcal A\)-normal} if, given any \(A \in \mathcal A\)
    and \(U \subseteq X\) open with \(A \subseteq U\), there exists an open set \(V\)
    so that \(A \subseteq V \subseteq \mathrm{cl}(V) \subseteq U\).
\end{definition}
Note that, if \(X\) is \(\mathcal A\)-normal, then \(X\) is regular.
If \(\mathcal A = K(X)\) and \(X\) is regular, then \(X\) is \(\mathcal A\)-normal.

\subsection{Uniform Spaces} \label{section:UniformSpaces}

For all necessary and relevant background regarding uniformities and uniform spaces,
see \cite[Chapter 6]{Kelley} and \cite[{\S}8.1]{Engelking}.

We will use the following key fact about uniform spaces below.
\begin{theorem}[see \cite{Kelley}] \label{thm:UniformMetrizability}
    A Hausdorff uniform space \((X,\mathcal E)\) is metrizable if and only if
    \(\mathcal E\) has a countable base.
\end{theorem}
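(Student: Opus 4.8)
The plan is to prove the two implications separately; the forward one is immediate, while the reverse is the substantive \emph{metrization lemma} and is the heart of the matter.

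For the forward implication, suppose $\mathcal E$ is the uniformity derived from a metric $d$ on $X$. Then for each $n \in \omega$ the set $V_n = \{\langle x,y\rangle \in X^2 : d(x,y) < 2^{-n}\}$ is an entourage, and by the very definition of the metric uniformity every $E \in \mathcal E$ contains some $V_n$; hence $\{V_n : n \in \omega\}$ is a countable base for $\mathcal E$.

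For the reverse implication, suppose $\{U_n : n \in \omega\}$ is a countable base for $\mathcal E$. First I would upgrade it to a well-behaved base: using closure under finite intersections together with the existence of symmetric refinements and of $F \in \mathcal E$ with $F \circ F \subseteq E$, one recursively builds symmetric entourages $V_n$ with $V_0 = X^2$, $V_{n+1} \subseteq V_n \cap U_n$, and $V_{n+1} \circ V_{n+1} \circ V_{n+1} \subseteq V_n$; since each $U_n$ contains some $V_m$ and each $V_n$ contains some $U_m$, the family $\{V_n : n \in \omega\}$ is again a base for $\mathcal E$. Now define $g : X^2 \to [0,1]$ by $g\langle x,y\rangle = \inf\{2^{-n} : \langle x,y\rangle \in V_n\}$, with the convention $g\langle x,y\rangle = 0$ when $\langle x,y\rangle \in V_n$ for every $n$, and then set
\[
d(x,y) = \inf\left\{ \sum_{i=0}^{k} g\langle z_i,z_{i+1}\rangle : k \in \omega,\ z_0 = x,\ z_{k+1} = y,\ z_i \in X \right\}.
\]
Symmetry of $d$, its vanishing on the diagonal, and the triangle inequality are routine (chains concatenate). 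The crucial estimate is $\tfrac{1}{2}\, g\langle x,y\rangle \leq d(x,y) \leq g\langle x,y\rangle$; the upper bound is given by the one-step chain, and the lower bound is proved by induction on the chain length. Granting this estimate, the sets $\{\langle x,y\rangle : d(x,y) < 2^{-n}\}$ interleave with the $V_n$ and therefore form a base for $\mathcal E$, so $d$ induces precisely the uniformity $\mathcal E$ (and in particular its topology). Finally, the Hausdorff hypothesis gives $\bigcap_n V_n = \Delta_X$, so $g$, and hence $d$, separates distinct points; thus $d$ is a genuine metric and $(X,\mathcal E)$ is metrizable.

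I expect the main obstacle to be the inductive lower bound $g\langle x,y\rangle \leq 2\,d(x,y)$: one splits an arbitrary chain from $x$ to $y$ at the place where its running partial sums first reach half of the total, applies the inductive hypothesis to each of the two halves, and then invokes the triple-composition property $V_{n+1} \circ V_{n+1} \circ V_{n+1} \subseteq V_n$ to conclude that $x$ and $y$ are $V_n$-related for the appropriate $n$. Everything else — verifying the pseudometric axioms and translating the $g$--$d$ comparison into coincidence of the induced uniformities — is bookkeeping. This is the classical argument; see \cite{Kelley}.
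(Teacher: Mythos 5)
Your argument is correct and is exactly the classical metrization-lemma proof from Kelley that the paper itself relies on (the paper offers no proof of this theorem, only the citation): the symmetric entourages with \(V_{n+1}\circ V_{n+1}\circ V_{n+1}\subseteq V_n\), the chain-infimum pseudometric, and the estimate \(\tfrac{1}{2}g\leq d\leq g\) proved by splitting a chain at half its total weight are precisely the cited route, with the Hausdorff hypothesis used only to upgrade the pseudometric to a metric. Nothing further is needed.
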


For a uniform space \((X,\mathcal E)\), there is a natural way to define a uniformity on \(K(X)\)
which is directly analogous to the Pompeiu-Hausdorff distance defined in the context of metric spaces.
\begin{definition}
    Let \((X,\mathcal E)\) be a uniform space and, for \(E \in \mathcal E\), define
    \[hE = \{\langle K , L \rangle \in K(X)^2 : K \subseteq E[L] \wedge L \subseteq E[K]\}.\]
\end{definition}

Just as the Pompeiu-Hausdorff distance on compact subsets generates the Vietoris topology,
the analogous uniformity also generates the Vietoris topology.
\begin{theorem}[see {\cite[Chapter 2]{CastaingValadier1977}}] \label{thm:UniformVietoris}
    For a uniform space \((X,\mathcal E)\), \(\mathcal B = \{ hE : E \in \mathcal E \}\)
    is a base for a uniformity on \(K(X)\);
    the topology generated by the uniform base \(\mathcal B\) is the Vietoris topology.
\end{theorem}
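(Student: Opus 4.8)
The plan is to verify the four base-for-a-uniformity axioms for $\mathcal B = \{ hE : E \in \mathcal E \}$ and then separately identify the generated topology with the Vietoris topology. For the uniformity axioms: the diagonal condition $\Delta_{K(X)} \subseteq hE$ is immediate, since $K \subseteq E[K]$ follows from $\Delta_X \subseteq E$. For the symmetry condition, I would check that $h(E^{-1}) \subseteq (hE)^{-1}$ (in fact equality holds): if $\langle K, L \rangle \in h(E^{-1})$, then $K \subseteq E^{-1}[L]$ and $L \subseteq E^{-1}[K]$, which unwinds to $L \subseteq E[K]$ and $K \subseteq E[L]$ after swapping roles, giving $\langle L, K \rangle \in hE$. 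For the composition condition, given $hE$ choose $F \in \mathcal E$ with $F \circ F \subseteq E$; I claim $hF \circ hF \subseteq hE$. If $\langle K, M \rangle \in hF$ and $\langle M, L \rangle \in hF$, then $K \subseteq F[M] \subseteq F[F[L]] = (F\circ F)[L] \subseteq E[L]$ and symmetrically $L \subseteq E[K]$, using the identity $F[F[A]] = (F \circ F)[A]$, which I would record as a one-line observation. Finally, for the intersection condition, I would show $h(E \cap F) \subseteq hE \cap hF$ directly from monotonicity of $E \mapsto E[x]$ in $E$.

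For the topological identification, the key is to compare basic neighborhoods. A basic uniform neighborhood of $K \in K(X)$ is $hE[K] = \{ L \in K(X) : L \subseteq E[K] \text{ and } K \subseteq E[L] \}$. I would first show such a set is Vietoris-open around $K$: cover $K$ by finitely many $F$-balls $F[x_1], \dots, F[x_n]$ for a symmetric $F$ with $F \circ F \subseteq E$ and $\{x_i\} \subseteq K$; then the Vietoris basic set $[F[x_1], \dots, F[x_n]]$ contains $K$ and is contained in $hE[K]$ — membership forces $L \subseteq \bigcup_i F[x_i] \subseteq E[K]$ (since each $x_i \in K$) and forces $L$ to meet each $F[x_i]$, so every $x_i \in F[F[L]] \subseteq E[L]$, whence $K \subseteq E[L]$ by compactness and the covering. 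Conversely, I would show each Vietoris basic neighborhood $[U_1, \dots, U_n]$ of $K$ contains some $hE[K]$: for each point of $K$ pick an entourage pushing it inside the relevant $U_j$, and for each $j$ pick a point $k_j \in K \cap U_j$ with an entourage $E_j$ satisfying $E_j[k_j] \subseteq U_j$; intersecting finitely many entourages (using compactness of $K$ to get finiteness) yields $E$ with $hE[K] \subseteq [U_1,\dots,U_n]$.

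The main obstacle is the converse direction of the topological comparison — showing that every Vietoris-basic neighborhood contains a uniform-basic one — because this is exactly where compactness of $K$ must be used twice: once to reduce the "$K \subseteq \bigcup U_j$" part of the Vietoris condition to a finite entourage choice, and once to handle the "$K$ meets each $U_j$" part, where one must be careful that shrinking $E$ does not destroy the nonempty intersections $E[k_j] \cap U_j$. Everything else is bookkeeping with the calculus of entourages ($E[A]$, composition, monotonicity), and I would state the needed small identities ($F[F[A]] = (F\circ F)[A]$, monotonicity of $E \mapsto E[A]$) up front to keep the verification clean. Alternatively, since the result is attributed to \cite[Chapter 2]{CastaingValadier1977}, I could simply cite it; but if a self-contained argument is wanted, the above is the route.
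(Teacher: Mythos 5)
The paper offers no proof of this statement at all: it is recorded as a known fact with a pointer to Castaing--Valadier, so your self-contained verification goes beyond what the paper does, and the route you outline (check the four base axioms, then compare basic neighborhoods in both directions) is the standard argument and is essentially sound. Three details need repair before it is complete. First, your symmetry check is false as stated: \(h(E^{-1}) \subseteq (hE)^{-1}\) can fail for non-symmetric \(E\); for example, on \(X=\{a,b\}\) with the discrete uniformity and \(E = \Delta_X \cup \{\langle a,b\rangle\}\), the pair \(\langle \{a,b\},\{b\}\rangle\) lies in \(h(E^{-1})\) but not in \(hE\). Fortunately the axiom is immediate anyway: the defining condition of \(hE\) is symmetric in \(K\) and \(L\), so \((hE)^{-1}=hE\) and one may simply take \(A = hE\) itself. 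Second, in showing that \(hE[K]\) is a Vietoris neighborhood of \(K\), the sets \(F[x_i]\) need not be open, whereas Vietoris basic sets are built from open sets; cover \(K\) instead by the interiors \(\mathrm{int}(F[x_i])\) (each \(F[x]\) is a neighborhood of \(x\) in the uniform topology), after which your estimates \(L \subseteq \bigcup_i F[x_i] \subseteq E[K]\) and the \(F\circ F \subseteq E\) computation giving \(K \subseteq E[L]\) go through. Third, in the converse direction the recipe ``pick an entourage pushing each point into its \(U_j\) and intersect finitely many'' is not enough as written: for an arbitrary \(k \in K\), not just the finitely many chosen points, you need \(E[k]\) to land inside some \(U_j\), so you must first choose symmetric \(F_x\) with \((F_x \circ F_x)[x] \subseteq U_{j(x)}\), cover \(K\) by the interiors of the sets \(F_x[x]\), and intersect the finitely many resulting \(F_{x_i}\) together with the \(E_j\)'s, taking the final \(E\) symmetric. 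The worry you single out there --- that shrinking \(E\) might destroy the nonempty intersections \(E[k_j]\cap U_j\) --- is not the actual difficulty: a smaller symmetric \(E\) only helps, since \(K \subseteq E[L]\) forces \(L \cap E[k_j] \neq \emptyset\) while \(E[k_j] \subseteq U_j\). With these fixes your proposal is a correct, complete proof of the statement the paper leaves to the cited literature.
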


For the set of functions from a space \(X\) to a uniform space \((Y,\mathcal E)\),
we review, following \cite[Chapter 7]{Kelley}, the uniformity which generates the topology of uniform convergence on a family
of subsets of \(X\).
\begin{definition} \label{def:UniformStructure}
    For the set \(Y^X\) of functions from a set \(X\) to a uniform space \((Y,\mathcal E)\), we define,
    for \(A \in \wp^+(X)\) and \(E \in \mathcal E\),
    \[
        \mathbf U(A,E) = \{\langle f , g \rangle \in (Y^X)^2 : (\forall x \in A)\ \langle f(x), g(x) \rangle \in E\}.
    \]
    For the set of functions \(X \to \mathbb K(Y)\), we let \(\mathbf W(A,E) = \mathbf U(A,hE)\).
\end{definition}
If \(\mathcal B\) is a base for a uniformity on \(Y\) and \(\mathcal A\) is an ideal of subsets of \(X\),
then \(\{ \mathbf U(A,B) : A \in \mathcal A , B \in \mathcal B \}\) forms a base for a uniformity on \(Y^X\).
The corresponding topology generated by this base for a uniformity is the topology of uniform
convergence on \(\mathcal A\).
Consequently, \(\{ \mathbf W(A,B) : A \in \mathcal A , B \in \mathcal B \}\) is a base for a uniformity
on \(\mathbb K(Y)^X\).

\subsection{Usco and Cusco Mappings}

In this section, we introduce the basic facts of usco and cusco mappings needed for this paper.
For a thorough introduction to usco mappings, see \cite{UscoBook}.
Of primary use are Theorems \ref{thm:HolaHolyUscoChar} and \ref{thm:HolaHolyCuscoChar} which offer convenient
characterizations of minimal usco and cusco maps, respectively.

A \emph{set-valued} function from \(X\) to \(Y\) is a function \(\Phi : X \to \wp(Y)\).
These are sometimes also referred to as \emph{multi-functions}.
\begin{definition}
    A set-valued function \(\Phi : X \to \wp(Y)\) is said to be \emph{upper semicontinuous} if, for every open
    \(V \subseteq Y\), \[\Phi^\leftarrow(V) := \{ x \in X : \Phi(x) \subseteq V\}\] is open in \(X\).
    An \emph{usco} map from a space \(X\) to \(Y\) is a set-valued map \(\Phi\) from \(X\) to \(Y\) which is upper semicontinuous
    and whose range is contained in \(\mathbb K(Y)\).
    Let \(\mathrm{USCO}(X,Y)\) denote the collection of all usco maps \(X \to \mathbb K(Y)\).
    
    An usco map \(\Phi : X \to \mathbb K(Y)\) is said to be \emph{minimal} if its graph is minimal with respect to the
    \(\subseteq\) relation.
    Let \(\mathrm{MU}(X,Y)\) denote the collection of all minimal usco maps \(X \to \mathbb K(Y)\).
\end{definition}
\begin{definition}
    Suppose \(Y\) is a Hausdorff linear space.
    An usco map \(\Phi : X \to \mathbb K(Y)\) is said to be \emph{cusco} if \(\Phi(x)\) is convex
    for every \(x \in X\).
    A cusco map \(\Phi : X \to \mathbb K(Y)\) is said to be \emph{minimal} if its graph is minimal with respect to the
    \(\subseteq\) relation.
    Let \(\mathrm{MC}(X,Y)\) denote the collection of all minimal cusco maps \(X \to \mathbb K(Y)\).
\end{definition}

It is clear that any continuous \(\Phi : X \to \mathbb K(Y)\) is usco and
that there are continuous \(\Phi : X \to \mathbb K(Y)\) which are not minimal.
As demonstrated by \cite[Example 1.31]{CCUsco}, there are minimal usco maps \(\mathbb R \to \mathbb K(\mathbb R)\)
which are not continuous.
We will show, in Example \ref{ex:UscoNotCont}, that such an example can be extended to produce a
minimal cusco map \(\mathbb R \to \mathbb K(\mathbb R)\) that is not continuous.

\begin{definition}
    Suppose \(\Phi : X \to \wp^+(Y)\).
    We say that a function \(f : X \to Y\) is a \emph{selection} of \(\Phi\)
    if \(f(x) \in \Phi(x)\) for every \(x \in X\).
    We let \(\mathrm{sel}(\Phi)\) be the set of all selections of \(\Phi\).
    
    If \(D \subseteq X\) is dense and \(f : D \to Y\) is so that \(f(x) \in \Phi(x)\)
    for each \(x \in D\), we say that \(f\) is a \emph{densely defined selection}
    of \(\Phi\).
\end{definition}

The notion of subcontinuity was introduced by Fuller \cite{Fuller1968} which can be extended
to so-called densely defined functions in the following way.
\begin{definition}
    Suppose \(D \subseteq X\) is dense.
    We say that a function \(f : D \to Y\) is \emph{subcontinuous} if, for every \(x \in X\)
    and every net \(\langle x_\lambda : \lambda \in \Lambda \rangle\) in \(D\) with \(x_\lambda \to x\),
    \(\langle f(x_\lambda) : \lambda \in \Lambda \rangle\) has an accumulation point.
\end{definition}

The following is a well-known property of usco maps that will be used in this paper.
\begin{lemma} \label{lem:SelectionsAreSubcontinuous}
    If \(\Phi \in \mathrm{USCO}(X,Y)\), then any selection of \(\Phi\) is subcontinuous.
\end{lemma}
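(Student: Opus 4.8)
The plan is to prove that any selection $f$ of an usco map $\Phi \in \mathrm{USCO}(X,Y)$ is subcontinuous by directly unpacking the definition and exploiting compactness of the values of $\Phi$ together with upper semicontinuity. So fix $x \in X$ and a net $\langle x_\lambda : \lambda \in \Lambda \rangle$ in $X$ (here the "dense subset" is all of $X$, or a dense subset; the argument is the same) with $x_\lambda \to x$. I must produce an accumulation point of $\langle f(x_\lambda) : \lambda \in \Lambda \rangle$. The natural candidate is to find an accumulation point inside the compact set $\Phi(x)$.

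First I would argue by contradiction: suppose $\langle f(x_\lambda) \rangle$ has no accumulation point in $\Phi(x)$. Since $\Phi(x) \in \mathbb K(Y)$ is compact, for each $y \in \Phi(x)$ there is an open neighborhood $V_y$ of $y$ and an index $\lambda_y \in \Lambda$ such that $f(x_\mu) \notin V_y$ for all $\mu \geq \lambda_y$ (this is the standard characterization of "not an accumulation point" for nets). By compactness, finitely many $V_{y_1}, \dots, V_{y_n}$ cover $\Phi(x)$; set $V = \bigcup_{i=1}^n V_{y_i}$, an open set containing $\Phi(x)$, and choose $\lambda^* \in \Lambda$ above each $\lambda_{y_i}$ (using that $\Lambda$ is directed), so that $f(x_\mu) \notin V$ for all $\mu \geq \lambda^*$.

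Next I would invoke upper semicontinuity: since $\Phi(x) \subseteq V$ with $V$ open, $\Phi^\leftarrow(V) = \{z \in X : \Phi(z) \subseteq V\}$ is open and contains $x$. Because $x_\lambda \to x$, there is $\lambda^{**} \in \Lambda$ with $x_\mu \in \Phi^\leftarrow(V)$ for all $\mu \geq \lambda^{**}$, i.e. $\Phi(x_\mu) \subseteq V$. Taking $\mu \in \Lambda$ above both $\lambda^*$ and $\lambda^{**}$, we get simultaneously $f(x_\mu) \in \Phi(x_\mu) \subseteq V$ (since $f$ is a selection) and $f(x_\mu) \notin V$, a contradiction. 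Hence $\langle f(x_\lambda) \rangle$ has an accumulation point in $\Phi(x)$, and in particular an accumulation point, so $f$ is subcontinuous. (If $f$ is only defined on a dense $D \subseteq X$ and the net lies in $D$, the same argument goes through verbatim, since we only ever evaluate $f$ along the net.)

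I do not anticipate a serious obstacle here; the only mild subtlety is the careful net bookkeeping — translating "no accumulation point" into the existence of a neighborhood and a tail index for each point of $\Phi(x)$, and then using directedness of $\Lambda$ to merge the finitely many tail indices coming from the finite subcover with the tail index coming from convergence into $\Phi^\leftarrow(V)$. Everything else is a routine application of compactness of $\Phi(x)$ and the definition of upper semicontinuity, both of which are available from the definitions given in the excerpt.
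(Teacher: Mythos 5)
Your proof is correct. Note that the paper itself does not prove this lemma---it records it as a well-known property of usco maps and cites nothing further---so there is no in-paper argument to compare against; what you give (using compactness of \(\Phi(x)\) to convert ``no accumulation point in \(\Phi(x)\)'' into a single open \(V \supseteq \Phi(x)\) avoided by a tail of \(\langle f(x_\lambda)\rangle\), then using upper semicontinuity of \(\Phi\) via the open set \(\Phi^\leftarrow(V)\) to force a tail of the net into \(V\), a contradiction) is exactly the standard argument one would supply, and your handling of the net bookkeeping and of densely defined selections is correct.
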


The notion of semi-open sets was introduced by Levine \cite{Levine1963}.
\begin{definition}
    For a space \(X\), a set \(A \subseteq X\) is said to be \emph{semi-open} if
    \(A \subseteq \mathrm{cl}\ \mathrm{int}(A)\).
\end{definition}

The notion of quasicontinuity was introduced by Kempisty \cite{Kempisty1932}
and surveyed by Neubrunn \cite{Neubrunn}.
\begin{definition}
    A function \(f : X \to Y\) is said to be \emph{quasicontinuous} if, for each open
    \(V \subseteq Y\), \(f^{-1}(V)\) is semi-open in \(X\).
    
    If \(D \subseteq X\) is dense and \(f : D \to Y\), we will say that \(f\) is quasicontinuous
    if it is quasicontinuous on \(D\) with the subspace topology.
\end{definition}

\begin{definition}
    For \(f \in \mathrm{Fn}(X,Y)\), define \(\overline{f} : X \to \wp(Y)\) by the rule
    \[\overline{f}(x) = \{ y \in Y : \langle x , y \rangle \in \mathrm{cl}\ \mathrm{gr}(f) \}.\]
\end{definition}
For a linear space \(Y\) and \(A \subseteq Y\), we will use \(\mathrm{conv}{\,}A\) to denote
the convex hull of \(A\).
\begin{definition}
    Suppose \(Y\) is a Hausdorff locally convex linear space.
    For \(f \in \mathrm{Fn}(X,Y)\), define \(\check{f} : X \to \wp(Y)\) by the rule
    \[\check{f}(x) = \mathrm{cl}{\,}\mathrm{conv}{\,}\overline{f}(x).\]
\end{definition}

\begin{theorem}[Hol{\'{a}}, Hol{\'{y}} \cite{HolaHoly2009,HolaHoly2014}] \label{thm:HolaHolyUscoChar}
    Suppose \(Y\) is regular and that \(\Phi : X \to \wp^+(Y)\).
    Then the following are equivalent:
    \begin{enumerate}[label=(\roman*)]
        \item \label{tfae:MinimalUsco}
        \(\Phi\) is minimal usco.
        \item \label{tfae:AllSubQuasi}
        Every selection \(f\) of \(\Phi\) is subcontinuous, quasicontinuous, and \(\Phi = \overline{f}\).
        \item \label{tfae:SomeSubQuasi}
        There exists a selection \(f\) of \(\Phi\) which is subcontinuous,
        quasicontinuous, and \(\Phi = \overline{f}\).
        \item \label{tfae:SomeDenseSubQuasi}
        There exists a densely defined selection \(f\) of \(\Phi\) which is subcontinuous,
        quasicontinuous, and \(\Phi = \overline{f}\).
    \end{enumerate}
\end{theorem}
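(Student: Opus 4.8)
The plan is to prove the four conditions equivalent along the cycle (i) $\Rightarrow$ (ii) $\Rightarrow$ (iii) $\Rightarrow$ (iv) $\Rightarrow$ (i). The two middle implications are essentially formal: (ii) $\Rightarrow$ (iii) simply chooses one point from each non-empty value $\Phi(x)$, and (iii) $\Rightarrow$ (iv) records that an everywhere-defined selection is a densely defined selection with $D = X$. The substantive implications (i) $\Rightarrow$ (ii) and (iv) $\Rightarrow$ (i) both rely on the same small toolkit, which I would set up first: (a) an usco map into a regular space has closed graph (separate a point from a compact fibre by disjoint open sets and apply upper semicontinuity); (b) if $\Psi : X \to \wp^+(Y)$ has closed graph and compact values and satisfies $\Psi(x) \subseteq \Phi(x)$ for all $x$ with $\Phi$ usco, then $\Psi$ is usco (given open $V \supseteq \Psi(x)$, cover the compact set $\Phi(x) \setminus V$ by finitely many open boxes missing $\mathrm{gr}(\Psi)$, enlarge $V$ to a superset of $\Phi(x)$, and intersect the resulting neighbourhoods of $x$); and (c) if $D \subseteq X$ is dense and $f : D \to Y$ is subcontinuous, then $\overline f$ is usco.

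For (i) $\Rightarrow$ (ii), I would take an arbitrary selection $f$ of the minimal usco map $\Phi$. Subcontinuity of $f$ is Lemma~\ref{lem:SelectionsAreSubcontinuous}. Since $\mathrm{gr}(f) \subseteq \mathrm{gr}(\Phi)$ and $\mathrm{gr}(\Phi)$ is closed by (a), we get $\overline f(x) \subseteq \Phi(x)$ for every $x$, so each $\overline f(x)$ is a closed subset of the compact set $\Phi(x)$; hence $\overline f$ has compact non-empty values and, by (b), is usco, and then $\mathrm{gr}(\overline f) \subseteq \mathrm{gr}(\Phi)$ together with minimality of $\Phi$ gives $\Phi = \overline f$. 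For quasicontinuity I would argue by contradiction: if $f$ fails to be quasicontinuous at some point $d_0$, there are open sets $V \ni f(d_0)$ and $U_0 \ni d_0$ such that no non-empty open subset of $U_0$ is sent into $V$ by $f$, which is to say $U_0 \subseteq \mathrm{cl}\{x : f(x) \notin V\}$. Using regularity of $Y$, fix open $W$ with $f(d_0) \in W \subseteq \mathrm{cl}\, W \subseteq V$ and let $\Psi$ be the multifunction with $\mathrm{gr}(\Psi) = \mathrm{gr}(\Phi) \setminus (U_0 \times W)$. This graph is closed; for $x \notin U_0$ the value is just $\Phi(x)$, while for $x \in U_0$ a net $z_\lambda \to x$ with $f(z_\lambda) \notin V$ has, by subcontinuity, a cluster point that lands in $\Phi(x) \setminus W$, so $\Psi$ has non-empty values; its values are compact and contained in those of $\Phi$, so $\Psi$ is usco by (b); and $\langle d_0, f(d_0)\rangle$ lies in $\mathrm{gr}(\Phi) \setminus \mathrm{gr}(\Psi)$, so $\Psi$ is an usco map with $\mathrm{gr}(\Psi) \subsetneq \mathrm{gr}(\Phi)$ — contradicting minimality.

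For (iv) $\Rightarrow$ (i), write $\Phi = \overline f$ with $f : D \to Y$ densely defined, subcontinuous, and quasicontinuous; then $\Phi$ is usco by (c), and it remains to prove minimality. Let $\Psi$ be usco with $\mathrm{gr}(\Psi) \subseteq \mathrm{gr}(\Phi) = \mathrm{cl}\,\mathrm{gr}(f)$; since $\mathrm{gr}(\Psi)$ is closed by (a), it suffices to show $\mathrm{gr}(f) \subseteq \mathrm{gr}(\Psi)$. Suppose instead $f(d) \notin \Psi(d)$ for some $d \in D$. Using regularity, separate $f(d)$ from the compact set $\Psi(d)$: pick an open $V^*$ with $f(d) \in V^*$ and $\mathrm{cl}\, V^* \cap \Psi(d) = \emptyset$, then use upper semicontinuity of $\Psi$ to get an open $U \ni d$ with $\Psi(u) \cap \mathrm{cl}\, V^* = \emptyset$ for all $u \in U$. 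Quasicontinuity of $f$ at $d$ (in the subspace $D$) yields a non-empty relatively open $G \subseteq U \cap D$ with $f(G) \subseteq V^*$; writing $G = O' \cap D$ with $O'$ open in $X$ and $O' \subseteq U$, any net in $D$ converging to a point of the open set $O'$ is eventually in $G$, so $\overline f(x) \subseteq \mathrm{cl}\, V^*$ for every $x \in O'$. But then for $x \in O'$ we have $\Psi(x) \subseteq \Phi(x) = \overline f(x) \subseteq \mathrm{cl}\, V^*$ and simultaneously $\Psi(x) \cap \mathrm{cl}\, V^* = \emptyset$, forcing $\Psi(x) = \emptyset$ and contradicting that $\Psi$ is usco.

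I expect the only genuine obstacle to be the non-formal half of (c): that subcontinuity of the densely defined $f$ forces $\overline f$ to have compact values (once that is in hand, $\overline f$ also has closed graph and takes values in a fixed compact set on a neighbourhood of each point, hence is usco, and everything else is bookkeeping with regularity and closed graphs). I would handle it by showing that subcontinuity of $f$ at $x$ produces a neighbourhood $N$ of $x$ with $\mathrm{cl}\, f(N \cap D)$ compact — otherwise, running along a neighbourhood base at $x$ one could select points of $D$ whose $f$-values escape every compact set, manufacturing a net convergent to $x$ whose image has no cluster point. For the codomains relevant to this paper (e.g.\ $Y = \mathbb R$) this reduces to the remark that a closed non-compact set is unbounded, so no net can have a cluster point while escaping it.
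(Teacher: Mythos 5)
The paper does not prove this theorem at all---it is quoted from Hol{\'a}--Hol{\'y} \cite{HolaHoly2009,HolaHoly2014}---so there is no in-paper proof to compare against; judged on its own, your cycle (i)\(\Rightarrow\)(ii)\(\Rightarrow\)(iii)\(\Rightarrow\)(iv)\(\Rightarrow\)(i) and the two substantive implications follow the standard route. Your toolkit items (a) and (b) are correct, the quasicontinuity argument in (i)\(\Rightarrow\)(ii) (deleting \(U_0 \times W\) from \(\mathrm{gr}(\Phi)\) and using subcontinuity to keep the values over \(U_0\) non-empty) is sound, and the minimality argument in (iv)\(\Rightarrow\)(i) (separate \(f(d)\) from the compact \(\Psi(d)\), shrink by upper semicontinuity of \(\Psi\), and use quasicontinuity of \(f\) on \(D\) to force \(\Psi = \emptyset\) on a non-empty open set) is exactly the right mechanism.

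The genuine gap is the one you flagged, item (c), and your proposed repair does not work at the stated level of generality (arbitrary regular \(Y\)). You want to extract, from subcontinuity at \(x\), a neighbourhood \(N\) with \(\mathrm{cl}\, f(N \cap D)\) compact; no such neighbourhood need exist: the identity map on \(\mathbb Q\), or on an infinite-dimensional Banach space, is subcontinuous (indeed continuous), yet no neighbourhood has relatively compact image. Moreover the contradiction you propose---``values escaping every compact set yield a net with no cluster point''---is false outside locally compact codomains: indexing by pairs (compact set, basic neighbourhood) one easily builds a net in \(\mathbb Q\) converging to \(0\) while eventually avoiding every compact subset of \(\mathbb Q\). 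What the hypotheses actually give is weaker and must be used directly: subcontinuity at \(x\) says the filter base \(\{ f(U \cap D) : U \ni x \text{ open}\}\) is compactoid, and for \emph{regular} \(Y\) one then shows that its adherence \(\overline f(x) = \bigcap_U \mathrm{cl}\, f(U\cap D)\) is non-empty and compact and that for every open \(W \supseteq \overline f(x)\) some \(f(U\cap D)\) lands in \(W\) (shrinking \(W\) by regularity to get upper semicontinuity of \(\overline f\), not just of its closure). This is precisely where the regularity hypothesis earns its keep, and it is the content of the Lechicki--Levi/Hol{\'a}--Hol{\'y} extension result. Your fallback argument via local boundedness is fine for \(Y=\mathbb R\), so it covers \(\mathrm{MU}(X)=\mathrm{MU}(X,\mathbb R)\), but note the paper also invokes this theorem for general Hausdorff locally convex \(Y\) (e.g.\ in Corollary \ref{cor:PreUscoAgreeOpen}), where closed bounded sets need not be compact, so the general case of (c) cannot be dispensed with.
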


Important initial contributions to the following characterization of cusco maps are found
in \cite{GilesMoors,BorweinZhu}.

\begin{theorem}[Hol{\'{a}}, Hol{\'{y}} \cite{HolaHoly2014}] \label{thm:HolaHolyCuscoChar}
    Suppose \(Y\) is a Hausdorff locally convex linear space
    in which the closed convex hull of a compact set is compact and that \(\Phi : X \to \wp^+(Y)\).
    Then the following are equivalent:
    \begin{enumerate}[label=(\roman*)]
        \item
        \(\Phi\) is minimal cusco.
        \item
        There exists a selection \(f\) of \(\Phi\) which is subcontinuous,
        quasicontinuous, and \(\Phi = \check{f}\).
        \item
        There exists a densely defined selection \(f\) of \(\Phi\) which is subcontinuous,
        quasicontinuous, and \(\Phi = \check{f}\).
    \end{enumerate}
\end{theorem}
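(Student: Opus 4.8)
\emph{Proof proposal.}
The plan is to derive this characterization from the usco characterization of Theorem~\ref{thm:HolaHolyUscoChar} by passing everything through the ``fiberwise closed convex hull'' operation sending a set-valued map \(\Psi\) to the map \(x \mapsto \mathrm{cl}\,\mathrm{conv}\,\Psi(x)\). Two auxiliary facts carry the argument. The first is purely topological: by a Zorn's-lemma argument — a chain of usco maps ordered by graph inclusion is bounded below by the fiberwise intersection, which is again usco because in each fiber one is intersecting a nested family of nonempty compact sets, so the intersection is a nonempty compact set and upper semicontinuity is preserved — the graph of every usco (in particular every cusco) map contains the graph of a minimal usco map. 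The second fact is where the hypothesis on \(Y\) enters: if \(\Psi\) is minimal usco, then \(x \mapsto \mathrm{cl}\,\mathrm{conv}\,\Psi(x)\) is minimal cusco. I would prove the three implications assuming these, and then indicate the proof of the second fact, which is the real content.

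Granting the auxiliary facts, the implications run as follows. For (i)\(\Rightarrow\)(ii): a minimal cusco \(\Phi\) is in particular usco, so it contains a minimal usco \(\Psi\); since \(\Psi(x) \subseteq \Phi(x)\) and each \(\Phi(x)\) is compact convex, the cusco map \(x \mapsto \mathrm{cl}\,\mathrm{conv}\,\Psi(x)\) has graph contained in \(\mathrm{gr}(\Phi)\), so minimality of \(\Phi\) forces \(\Phi(x) = \mathrm{cl}\,\mathrm{conv}\,\Psi(x)\) for all \(x\). Choosing any selection \(f\) of \(\Psi\), Theorem~\ref{thm:HolaHolyUscoChar} applied to \(\Psi\) gives that \(f\) is subcontinuous, quasicontinuous, and \(\overline f = \Psi\); then \(f\) is a selection of \(\Phi\) and \(\check f(x) = \mathrm{cl}\,\mathrm{conv}\,\overline f(x) = \mathrm{cl}\,\mathrm{conv}\,\Psi(x) = \Phi(x)\). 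The implication (ii)\(\Rightarrow\)(iii) is immediate. For (iii)\(\Rightarrow\)(i): given a densely defined selection \(f\) of \(\Phi\) that is subcontinuous and quasicontinuous with \(\Phi = \check f\), the function \(f\) is a densely defined selection of \(\overline f\), so Theorem~\ref{thm:HolaHolyUscoChar} (in the form \ref{tfae:SomeDenseSubQuasi}\(\Rightarrow\)\ref{tfae:MinimalUsco}) yields that \(\overline f\) is minimal usco; by the second auxiliary fact, \(x \mapsto \mathrm{cl}\,\mathrm{conv}\,\overline f(x) = \check f(x) = \Phi(x)\) is minimal cusco.

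The real work, and what I expect to be the main obstacle, is the second auxiliary fact. That \(x \mapsto \mathrm{cl}\,\mathrm{conv}\,\Psi(x)\) has nonempty compact convex values is exactly the hypothesis that closed convex hulls of compact subsets of \(Y\) are compact. Upper semicontinuity takes a separate argument: given an open \(V \supseteq \mathrm{cl}\,\mathrm{conv}\,\Psi(x_0)\), one uses local convexity of \(Y\) to interpose a convex open set \(V' \subseteq V\) between \(\mathrm{cl}\,\mathrm{conv}\,\Psi(x_0)\) and \(V\) (covering the compact set \(\mathrm{cl}\,\mathrm{conv}\,\Psi(x_0)\) by translates of a convex symmetric neighborhood of \(0\)), then upper semicontinuity of \(\Psi\) provides a neighborhood \(W\) of \(x_0\) with \(\Psi(W) \subseteq V'\), whence \(\mathrm{cl}\,\mathrm{conv}\,\Psi(W) \subseteq V' \subseteq V\). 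Minimality is the most delicate point. If \(\Theta\) is cusco with \(\mathrm{gr}(\Theta) \subseteq \mathrm{gr}(x \mapsto \mathrm{cl}\,\mathrm{conv}\,\Psi(x))\), then \(\Theta\) is usco and, by the first auxiliary fact, contains a minimal usco \(\Psi'\); it suffices to prove \(\mathrm{cl}\,\mathrm{conv}\,\Psi'(x) = \mathrm{cl}\,\mathrm{conv}\,\Psi(x)\) for every \(x\), since then \(\Theta(x) = \mathrm{cl}\,\mathrm{conv}\,\Theta(x) \supseteq \mathrm{cl}\,\mathrm{conv}\,\Psi'(x) = \mathrm{cl}\,\mathrm{conv}\,\Psi(x) \supseteq \Theta(x)\). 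For that equality I would use Milman's theorem (the extreme points of the compact convex set \(\mathrm{cl}\,\mathrm{conv}\,\Psi(x)\) lie in \(\Psi(x)\)) together with the Hahn--Banach separation theorem and the local ``selection'' characterization of minimal usco maps: if some extreme point of \(\mathrm{cl}\,\mathrm{conv}\,\Psi(x_0)\) were separated from \(\mathrm{cl}\,\mathrm{conv}\,\Psi'\) by a continuous linear functional on a neighborhood of \(x_0\), the minimality of \(\Psi\) would produce points where \(\Psi\) — hence \(\mathrm{cl}\,\mathrm{conv}\,\Psi\), hence \(\Theta\), hence \(\Psi'\) — is pushed to the wrong side of that functional, contradicting upper semicontinuity of \(\Psi'\). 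The remaining ingredients (the Zorn argument, the closed-graph property of usco maps when \(Y\) is regular, and the various routine inclusions among \(\mathrm{gr}(f)\), \(\mathrm{cl}\,\mathrm{gr}(f)\), \(\overline f\), and \(\check f\)) are standard.
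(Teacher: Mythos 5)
The paper itself offers no proof of this theorem --- it is imported verbatim from Hol\'a and Hol\'y \cite{HolaHoly2014} --- so there is no internal argument to compare against; judged on its own, your proposal is essentially correct, and what you do differently from the paper is to re-derive, rather than cite, the usco--cusco correspondence that the paper quotes separately as Theorem \ref{thm:UscoCuscoCorrespondence} (from \cite{HolaHolyRelations}): your ``second auxiliary fact'' is precisely the statement that \(F \mapsto \mathrm{cl}\,\mathrm{conv}\,F\) sends minimal usco maps to minimal cusco maps, and once that and the Zorn's-lemma existence of minimal usco submaps are available, your three implications go through with Theorem \ref{thm:HolaHolyUscoChar} supplying the selection-theoretic content, which is indeed the spirit of the original Hol\'a--Hol\'y proof. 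Two details deserve tightening. In the upper-semicontinuity step you pass from \(\Psi(x) \subseteq V'\) to \(\mathrm{cl}\,\mathrm{conv}\,\Psi(x) \subseteq V'\), but the closed convex hull is only guaranteed to lie in \(\mathrm{cl}(V')\); the standard repair is to pick a convex neighborhood \(U\) of \(0\) with \(\mathrm{cl}\,\mathrm{conv}\,\Psi(x_0) + U + U \subseteq V\) and take \(V' = \mathrm{cl}\,\mathrm{conv}\,\Psi(x_0) + U\), so that \(\mathrm{cl}(V') \subseteq V\). In (iii)\(\Rightarrow\)(i) you should also say explicitly that subcontinuity of the densely defined \(f\) forces \(\overline f(x) \neq \emptyset\) for every \(x \in X\), since Theorem \ref{thm:HolaHolyUscoChar} is stated for maps into \(\wp^+(Y)\). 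Your minimality argument via separation is sound, and in fact Milman's theorem can be bypassed: if some point of \(\mathrm{cl}\,\mathrm{conv}\,\Psi(x_0)\) is strictly separated from \(\mathrm{cl}\,\mathrm{conv}\,\Psi'(x_0)\) by a functional, then \(\Psi(x_0)\) itself must meet the corresponding open half-space (otherwise its closed convex hull would miss the separated point), and the open-set characterization of minimality of \(\Psi\) then produces a nonempty open set inside any neighborhood of \(x_0\) on which \(\Psi\), hence \(\Psi'\), lies in the closed half-space on the wrong side, contradicting upper semicontinuity of \(\Psi'\) at \(x_0\) exactly as you indicate. The trade-off between the two routes is the expected one: citing Theorem \ref{thm:UscoCuscoCorrespondence} (as the paper does) makes the equivalence a short bookkeeping exercise, while your self-contained route costs the separation/Krein--Milman-type analysis but exposes where the hypothesis on closed convex hulls of compact sets is actually used.
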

\begin{remark} \label{rmk:MaximumSelection}
    Based on \cite[Theorem 3.2]{HolaHoly2014}, for each multi-function \(\Phi \in \mathrm{MC}(X,\mathbb R)\),
    the function \(f:X \to \mathbb R\) defined by \(f(x) = \max \Phi(x)\) is subcontinuous and quasicontinuous.
\end{remark}

The following is motivated by Theorem \ref{thm:HolaHolyCuscoChar}.
\begin{definition}
    For any \(\Phi \in \mathrm{MC}(X,Y)\), let \(\mathrm{sel}_{QS}(\Phi)\) be the collection
    of selections \(f\) of \(\Phi\) that are quasicontinuous, subcontinuous, and \(\check{f} = \Phi\).
\end{definition}

As suggested by Theorems \ref{thm:HolaHolyUscoChar} and \ref{thm:HolaHolyCuscoChar},
there is a close relationship between minimal usco and minimal cusco maps.
\begin{theorem}[{\cite[Theorem 2.6]{HolaHolyRelations}}] \label{thm:UscoCuscoCorrespondence}
    Let \(X\) be a space and \(Y\) be a Hausdorff locally convex linear space in which the closed
    convex hull of a compact set is compact.
    Define \(\varphi : \mathrm{MU}(X,Y) \to \mathbb K(Y)^X\) by the rule
    \(\varphi(F)(x) = \mathrm{cl}{\,}\mathrm{conv} F(x).\)
    Then \(\varphi\) is a bijection  from \(\mathrm{MU}(X,Y)\) to \(\mathrm{MC}(X,Y)\).
\end{theorem}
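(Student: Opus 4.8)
The plan is to read well\nobreakdash-definedness and surjectivity straight off the selection characterizations in Theorems \ref{thm:HolaHolyUscoChar} and \ref{thm:HolaHolyCuscoChar}, and to concentrate the real work on injectivity.

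\emph{Well-definedness and surjectivity.} Given \(F\in\mathrm{MU}(X,Y)\), each value \(\varphi(F)(x)=\mathrm{cl}\,\mathrm{conv}\,F(x)\) is nonempty and convex, and it is compact precisely because \(Y\) is assumed to have the property that closed convex hulls of compacta are compact; so \(\varphi(F)\in\mathbb K(Y)^X\). To promote this to \emph{minimal} cusco, I would invoke Theorem \ref{thm:HolaHolyUscoChar} to obtain a subcontinuous, quasicontinuous selection \(f\) of \(F\) with \(\overline f=F\); this same \(f\) is a selection of \(\varphi(F)\), and \(\check f=\mathrm{cl}\,\mathrm{conv}\,\overline f=\varphi(F)\), so Theorem \ref{thm:HolaHolyCuscoChar} forces \(\varphi(F)\in\mathrm{MC}(X,Y)\). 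For surjectivity, start from \(\Phi\in\mathrm{MC}(X,Y)\), use Theorem \ref{thm:HolaHolyCuscoChar} to pick a subcontinuous, quasicontinuous selection \(g\) with \(\check g=\Phi\), and observe that \(g\) is an everywhere-defined selection of \(\overline g\); hence \(\overline g\) meets a hypothesis of Theorem \ref{thm:HolaHolyUscoChar} and is therefore minimal usco — in particular compact-valued, so no separate check is needed. Then \(\varphi(\overline g)=\mathrm{cl}\,\mathrm{conv}\,\overline g=\check g=\Phi\).

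\emph{Injectivity — the main obstacle.} Suppose \(\varphi(F_1)=\varphi(F_2)=:\Phi\) but \(F_1\neq F_2\); fix \(y_0\in F_1(x_0)\setminus F_2(x_0)\) for some \(x_0\). The difficulty is that \(y_0\) still lies in \(\Phi(x_0)=\mathrm{cl}\,\mathrm{conv}\,F_2(x_0)\), so it cannot be separated from \(\Phi(x_0)\); one must separate it from \(F_2(x_0)\) and then exploit minimality of \(F_1\). I would argue as follows. Using local convexity of \(Y\) and compactness of \(F_2(x_0)\), choose a \emph{convex} open \(V\) with \(y_0\in V\) and \(\mathrm{cl}\,V\cap F_2(x_0)=\emptyset\); by upper semicontinuity of \(F_2\), shrink to an open \(U_0\ni x_0\) with \(F_2(x)\cap\mathrm{cl}\,V=\emptyset\) for all \(x\in U_0\). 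Next find a nonempty open \(U_1\subseteq U_0\) on which \(F_1\) is \emph{trapped inside} \(V\) (the key step, below). For \(x\in U_1\), convexity of \(V\) gives \(\Phi(x)=\mathrm{cl}\,\mathrm{conv}\,F_1(x)\subseteq\mathrm{cl}\,V\), whence \(\emptyset\neq F_2(x)\subseteq\mathrm{cl}\,\mathrm{conv}\,F_2(x)=\Phi(x)\subseteq\mathrm{cl}\,V\), contradicting \(F_2(x)\cap\mathrm{cl}\,V=\emptyset\). Combining with surjectivity then yields the bijection (and shows \(\varphi^{-1}(\Phi)=\overline g\) is the minimal usco whose graph sits inside \(\mathrm{gr}(\Phi)\)).

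The trapping step is the lemma I expect to cost the most: \emph{if \(F\) is minimal usco, \(U\) is nonempty open, \(V\) is open, and \(F(x_0)\cap V\neq\emptyset\) for some \(x_0\in U\), then \(F(U')\subseteq V\) for some nonempty open \(U'\subseteq U\).} I would prove it by contradiction. If \(F^{\leftarrow}(V)\cap U=\emptyset\) then \(F(x)\not\subseteq V\) throughout \(U\). Take a selection \(f\) of \(F\) with \(f(x_0)\in F(x_0)\cap V\); by Theorem \ref{thm:HolaHolyUscoChar}, \(f\) is quasicontinuous and \(\overline f=F\), so \(f^{-1}(V)\) is semi-open and meets \(U\), producing a nonempty open \(W\) with \(f(W)\subseteq V\). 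On \(W\) choose a second selection \(g\) agreeing with \(f\) off \(W\) and taking values in the closed set \(Y\setminus V\) on \(W\); since \(W\) is open, a net argument confines \(\mathrm{cl}\,\mathrm{gr}(g)\) over \(W\) to \(W\times(Y\setminus V)\), so \(\overline g(x)\subseteq Y\setminus V\) for \(x\in W\). But \(\overline g=F\) by Theorem \ref{thm:HolaHolyUscoChar} (every selection of a minimal usco generates it), so \(F(x)\cap V=\emptyset\) on the nonempty \(W\), contradicting \(f(x)\in F(x)\cap V\) there. Throughout, local convexity enters only in producing the convex separating set \(V\), which is exactly the feature that makes the cusco side of the correspondence work.
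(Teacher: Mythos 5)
Your proposal is correct, but note that the paper offers no proof of this statement at all: it is imported verbatim from Hol\'a--Hol\'y \cite{HolaHolyRelations}, so the only comparison available is with that source rather than with anything in this paper. Your reconstruction is sound and follows the natural route: well-definedness and surjectivity do fall straight out of Theorems \ref{thm:HolaHolyUscoChar} and \ref{thm:HolaHolyCuscoChar} exactly as you say (a quasicontinuous, subcontinuous selection \(f\) of \(F\) with \(\overline f = F\) satisfies \(\check f = \varphi(F)\), and conversely \(\overline g\) is minimal usco with \(\varphi(\overline g)=\check g=\Phi\)), and your injectivity argument via separating \(y_0\) from the compact set \(F_2(x_0)\) by a convex open \(V\) with \(\mathrm{cl}\,V\) disjoint from \(F_2(x_0)\), shrinking by upper semicontinuity, and then trapping \(F_1\) inside \(V\) on a nonempty open set is exactly the kind of argument used in the literature; the trapping lemma you isolate is the classical characterization of minimal usco maps, and your proof of it (quasicontinuity of a selection gives a nonempty open \(W\subseteq U\) with \(f(W)\subseteq V\), then a competing selection valued in \(Y\setminus V\) on \(W\) forces \(F(W)\subseteq Y\setminus V\) via \(\overline g = F\)) is valid. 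Two small points you leave implicit but which are harmless: the negation of the trapping conclusion is \(F^{\leftarrow}(V)\cap U=\emptyset\) precisely because \(F^{\leftarrow}(V)\) is open by upper semicontinuity, and one should record that \(W\) can be taken inside \(U\) (take \(W=U\cap\mathrm{int}\,f^{-1}(V)\)), since the existence of values of \(F\) outside \(V\) is only guaranteed on \(U\). Also, Theorem \ref{thm:HolaHolyUscoChar} needs \(Y\) regular, which holds since a Hausdorff topological vector space is (completely) regular; worth one line in a final write-up.
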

In fact, by \cite[Theorem 3.1]{HolaHolyRelations}, when \(Y\) is a Banach space and
\(\mathrm{MU}(X,Y)\) and \(\mathrm{MC}(X,Y)\) are both given the topology of either pointwise
convergence or uniform convergence on compacta, \(\varphi\) is continuous.
When \(X\) is locally compact, \(Y\) is a Banach space, and the spaces
\(\mathrm{MU}(X,Y)\) and \(\mathrm{MC}(X,Y)\) are given the topology of uniform convergence
on compacta, by \cite[Theorem 3.2]{HolaHolyRelations}, \(\varphi\) is
a homeomorphism.
However, it seems unknown whether \(\mathrm{MU}(X,Y)\) and \(\mathrm{MC}(X,Y)\) are always homeomorphic,
though \cite[Example 3.2]{HolaHolyRelations} may lead one to conjecture that
this is not the case since it is an example where the natural mapping \(\varphi\) is not
a homeomorphism.

We will be using the following to construct certain functions.
\begin{lemma} \label{lem:UsefulFunction}
    Let \(f , g : X \to Y\) and \(U \in \mathscr T_X\) and define \(h : X \to Y\) by the rule
    \[h(x) = \begin{cases} f(x), & x \in \mathrm{cl}(U); \\ g(x), & x \not\in \mathrm{cl}(U).\end{cases}\]
    \begin{enumerate}[label=(\roman*)]
        \item \label{charSub}
        If \(f\) and \(g\) are subcontinuous, then \(h\) is subcontinuous.
        \item \label{charQuasi}
        If \(f\) is constant, and \(g\) is quasicontinuous,
        then \(h\) is quasicontinuous.
    \end{enumerate}
    Consequently, if \(f\) is constant and \(g\) is both subcontinuous and quasicontinuous, then
    \(h\) is both subcontinuous and quasicontinuous.
    Moreover, \(\overline{h}\) is minimal usco;
    under the assumption that \(Y\) is a Hausdorff locally convex linear
    space in which the closed convex hull of a compact set is compact,
    \(\check{h}\) is minimal cusco.
\end{lemma}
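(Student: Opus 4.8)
The plan is to prove the two parts separately, then derive the "Consequently" and "Moreover" clauses from them.

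For part \ref{charSub}, I would argue directly from the net characterization of subcontinuity. Let $x \in X$ and let $\langle x_\lambda : \lambda \in \Lambda \rangle$ be a net in $X$ with $x_\lambda \to x$; I want an accumulation point of $\langle h(x_\lambda) : \lambda \in \Lambda \rangle$. The idea is to split the index set according to whether $x_\lambda \in \mathrm{cl}(U)$. At least one of the two sets $\{\lambda : x_\lambda \in \mathrm{cl}(U)\}$ and $\{\lambda : x_\lambda \notin \mathrm{cl}(U)\}$ is cofinal in $\Lambda$, so pass to a cofinal subnet lying entirely in one case. On that subnet, $h$ agrees with either $f$ or $g$, both of which are subcontinuous, so the subnet of $\langle h(x_\lambda)\rangle$ has an accumulation point; an accumulation point of a subnet is an accumulation point of the original net. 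This gives \ref{charSub}.

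For part \ref{charQuasi}, fix an open $V \subseteq Y$; I need $h^{-1}(V)$ semi-open, i.e. $h^{-1}(V) \subseteq \mathrm{cl}\,\mathrm{int}\, h^{-1}(V)$. Write $c$ for the constant value of $f$. Take $x \in h^{-1}(V)$ and a basic open neighborhood $W$ of $x$; I must find a nonempty open subset of $W \cap h^{-1}(V)$. There are two cases. If $x \notin \mathrm{cl}(U)$, then $h(x) = g(x) \in V$, and near $x$ (inside the open set $X \setminus \mathrm{cl}(U)$, where $h = g$) quasicontinuity of $g$ supplies a nonempty open subset of $W \cap (X\setminus \mathrm{cl}(U)) \cap g^{-1}(V) \subseteq W \cap h^{-1}(V)$. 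If $x \in \mathrm{cl}(U)$, then $h(x) = c$, so $c \in V$; now either $W \cap U \neq \emptyset$, and since $g$ is quasicontinuous its preimages are semi-open so one may shrink inside $W \cap U$ to a nonempty open $W'$, then replace $W'$ by $W' \cap \mathrm{int}(g^{-1}(V \cup (Y\setminus\{g\text{-values}\})))$—more cleanly: I would instead note that on the open set $U$ we have $h = f \equiv c \in V$, so $W \cap U$ itself is a nonempty open subset of $h^{-1}(V)$ whenever $W \cap U \neq \emptyset$; and if $W \cap U = \emptyset$ then $W \subseteq X \setminus U$ is open, and $x \in \mathrm{cl}(U) \cap W$ forces $x$ to be a boundary point, but $W$ being disjoint from $U$ means $W \cap \mathrm{cl}(U) \subseteq W \cap \partial U$ has empty interior relative to... this degenerate subcase needs a little care. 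The clean resolution: if $W \cap U = \emptyset$, then $W \cap \mathrm{cl}(U) = W \cap \mathrm{cl}(U)\setminus U$; since $W \setminus \mathrm{cl}(U)$ is open and its closure contains $W \cap \mathrm{cl}(U)$ (because $x \in \mathrm{cl}(U)$ cannot be in the interior of $X \setminus U$ when... actually $x$ could be such a point). The correct statement is that $W \cap h^{-1}(V) \supseteq (W \setminus \mathrm{cl}(U)) \cap g^{-1}(V)$, and I claim this is nonempty with nonempty interior: the subtle point is exactly that the indicator-type splitting behaves well because $\mathrm{cl}(U)$ is closed and $X \setminus \mathrm{cl}(U)$ is dense in $X \setminus U$. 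I expect \emph{this bookkeeping in part \ref{charQuasi}—handling the boundary $\partial U$ correctly—to be the main obstacle}; everything else is routine.

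The "Consequently" clause is immediate: combine \ref{charSub} and \ref{charQuasi} with $f$ the constant function. For the "Moreover" clause, once $h$ is subcontinuous and quasicontinuous, $h$ is in particular a subcontinuous quasicontinuous everywhere-defined selection of the set-valued map $\overline h$; then Theorem \ref{thm:HolaHolyUscoChar}, applied via the implication \ref{tfae:SomeSubQuasi} $\Rightarrow$ \ref{tfae:MinimalUsco} with the selection taken to be $h$ itself and $\Phi = \overline h$ (using regularity of $Y$), shows $\overline h$ is minimal usco. Under the extra hypothesis that $Y$ is a Hausdorff locally convex linear space in which the closed convex hull of a compact set is compact, the same selection $h$ witnesses condition (ii) of Theorem \ref{thm:HolaHolyCuscoChar} with $\Phi = \check h$, so $\check h$ is minimal cusco. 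I would close by remarking that this is the mechanism by which all the "glue a constant onto a good function across a closed set" constructions later in the paper produce legitimate minimal usco/cusco maps.
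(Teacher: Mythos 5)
Your handling of the ``Moreover'' clause is exactly the paper's: the paper disposes of (i), (ii), the ``Consequently'' statement and the minimal usco claim by citing \cite[Lemma 1.30]{CCUsco}, and then, just as you do, obtains minimality of \(\check h\) by feeding the subcontinuous, quasicontinuous selection \(h\) into Theorem \ref{thm:HolaHolyCuscoChar} (and of \(\overline h\) via Theorem \ref{thm:HolaHolyUscoChar}). Your self-contained proof of (i) is also correct: one of the two index sets is cofinal, the restricted subnet still converges to \(x\), \(h\) agrees with \(f\) or with \(g\) along it, and an accumulation point of a subnet is an accumulation point of the original net.

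The genuine problem is the unresolved tangle in part (ii). The ``degenerate subcase'' you agonize over --- \(x \in \mathrm{cl}(U)\), \(W\) an open neighborhood of \(x\), and \(W \cap U = \emptyset\) --- is vacuous: by the definition of closure, every open neighborhood of a point of \(\mathrm{cl}(U)\) meets \(U\). So your first observation already finishes that case: since \(c \in V\) we have \(\mathrm{cl}(U) \subseteq h^{-1}(V)\), and \(W \cap U\) is a nonempty open subset of \(W \cap h^{-1}(V)\). As written, however, you leave the subcase open (``needs a little care'') and the attempted repairs are incorrect: \(X \setminus \mathrm{cl}(U)\) need not be dense in \(X \setminus U\) (take \(U\) a dense proper open set, so that \(X \setminus \mathrm{cl}(U) = \emptyset\) while \(X \setminus U \neq \emptyset\)), and the claim that \((W \setminus \mathrm{cl}(U)) \cap g^{-1}(V)\) is nonempty with nonempty interior has no justification, since quasicontinuity of \(g\) only produces such open sets near points where \(g\) itself takes values in \(V\). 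Delete that digression, insert the one-line closure observation, and your proof of (ii) --- and hence of the whole lemma --- is complete; it then amounts to a direct verification of what the paper imports by citation, plus the same appeal to Theorem \ref{thm:HolaHolyCuscoChar} for the cusco statement.
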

\begin{proof}
    All except for the fact that \(\check{h}\) is minimal cusco when \(Y\) is a Hausdorff locally convex linear
    space in which the closed convex hull of a compact set is compact is proved in \cite[Lemma 1.30]{CCUsco}.
    For the remaining remark, simply appeal to Theorem \ref{thm:HolaHolyCuscoChar}.
\end{proof}
\begin{example} \label{ex:UscoNotCont}
    Consider \(\mathrm{MC}(\mathbb R,\mathbb R)\).
    By Lemma \ref{lem:UsefulFunction}, \(\Phi := \check{\mathbf{1}}_{[0,1]}\) is minimal cusco.
    However, \(\Phi\) is not continuous since
    \[\{0,1\} = \{ x\in \mathbb R : \Phi(x) \in [(-0.5,0.75),(0.25,1.5)] \}.\]
\end{example}
Hence, when \(Y\) is metrizable, studying the space \(\mathrm{MC}(X,Y)\) is, in general, different than studying
the space of continuous functions into a metrizable space.

Although the next two results are known, we provide short proofs for the convenience of the reader.
\begin{corollary} \label{cor:PreUscoAgreeOpen}
    Suppose \(X\) is a space, \(Y\) is a Hausdorff locally convex linear space
    in which the closed convex hull of a compact set is compact,
    \(\Phi, \Psi \in \mathrm{MC}(X,Y)\), \(f \in \mathrm{sel}_{QS}(\Phi)\), \(g\in \mathrm{sel}_{QS}(\Psi)\),
    and \(f\restriction_U = g\restriction_U\).
    Then \(\Phi\restriction_U = \Psi\restriction_U\).
\end{corollary}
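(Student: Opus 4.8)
The plan is to leverage Theorem~\ref{thm:HolaHolyCuscoChar}: since $f \in \mathrm{sel}_{QS}(\Phi)$ means $f$ is a quasicontinuous, subcontinuous selection of $\Phi$ with $\check f = \Phi$, and similarly $\check g = \Psi$, it suffices to show that the hypothesis $f\restriction_U = g\restriction_U$ forces $\check f(x) = \check g(x)$ for every $x \in U$. Because $\check f(x) = \mathrm{cl}\,\mathrm{conv}\,\overline{f}(x)$ and $\overline{f}(x)$ is read off from $\mathrm{cl}\,\mathrm{gr}(f)$, the real content is that, for $x \in U$, the vertical fiber of $\mathrm{cl}\,\mathrm{gr}(f)$ over $x$ depends only on the behavior of $f$ near $x$ — and in particular only on $f\restriction_U$, since $U$ is an open neighborhood of $x$.

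The key step I would carry out is the following localization claim: if $x \in U$ with $U$ open, then $\overline{f}(x) = \overline{f\restriction_U}(x)$, where the right-hand closure is taken in $U \times Y$ (equivalently, $\langle x, y\rangle \in \mathrm{cl}_{X\times Y}\,\mathrm{gr}(f)$ iff $\langle x,y\rangle \in \mathrm{cl}_{U\times Y}\,\mathrm{gr}(f\restriction_U)$, for $x\in U$). One inclusion is immediate since $\mathrm{gr}(f\restriction_U) \subseteq \mathrm{gr}(f)$. For the other, suppose $\langle x, y\rangle \in \mathrm{cl}\,\mathrm{gr}(f)$ with $x \in U$; then there is a net $\langle x_\lambda, f(x_\lambda)\rangle \to \langle x,y\rangle$, and since $U$ is open and $x_\lambda \to x \in U$, the net is eventually inside $U$, so $\langle x, y\rangle$ is a limit of points of $\mathrm{gr}(f\restriction_U)$ as well. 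Applying this to both $f$ and $g$ and using $f\restriction_U = g\restriction_U$, we get $\overline{f}(x) = \overline{g}(x)$ for all $x \in U$, hence $\check f(x) = \mathrm{cl}\,\mathrm{conv}\,\overline f(x) = \mathrm{cl}\,\mathrm{conv}\,\overline g(x) = \check g(x)$, i.e. $\Phi\restriction_U = \Psi\restriction_U$.

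I expect the only mild subtlety to be bookkeeping about which ambient space the closure is taken in, and making sure the "eventually in $U$" argument for nets is stated cleanly (it is the standard fact that an open set is a neighborhood of each of its points). Everything else is formal: the identity $\Phi = \check f$, $\Psi = \check g$ comes for free from the definition of $\mathrm{sel}_{QS}$, and no further appeal to subcontinuity or quasicontinuity of $f,g$ is actually needed for this particular corollary — those hypotheses are only there to guarantee, via Theorem~\ref{thm:HolaHolyCuscoChar}, that the objects in question genuinely are the minimal cusco maps $\Phi$ and $\Psi$. So the whole proof is essentially the one-line localization observation plus the remark that $\mathrm{cl}\,\mathrm{conv}$ is applied pointwise.
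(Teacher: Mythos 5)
Your proof is correct, but it takes a genuinely different route from the paper's. The paper's proof sets \(\Phi^* = \overline{f}\) and \(\Psi^* = \overline{g}\), uses the quasicontinuity and subcontinuity hypotheses via Theorem \ref{thm:HolaHolyUscoChar} to see that these are minimal usco, quotes the analogous result for minimal usco maps \cite[Cor. 1.32]{CCUsco} to conclude \(\Phi^*\restriction_U = \Psi^*\restriction_U\), and then transfers back to the cusco maps through \(\Phi = \check{f} = \mathrm{cl}\,\mathrm{conv}\,\overline{f}\) and Theorem \ref{thm:UscoCuscoCorrespondence}. You instead prove the underlying localization fact directly: for \(x\) in the open set \(U\), the fiber \(\overline{f}(x)\) is determined by \(f\restriction_U\) (your net argument is sound, and amounts to the standard inclusion \(\mathrm{cl}(\mathrm{gr}(f)) \cap (U \times Y) \subseteq \mathrm{cl}(\mathrm{gr}(f\restriction_U))\) for the open set \(U \times Y\)), so \(\overline{f} = \overline{g}\) on \(U\) and hence \(\check{f} = \check{g}\) on \(U\). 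Your remark that only the identities \(\check{f} = \Phi\) and \(\check{g} = \Psi\) from the definition of \(\mathrm{sel}_{QS}\) are actually used is accurate: in the paper, subcontinuity and quasicontinuity enter only because its argument routes through minimality of \(\overline{f},\overline{g}\) in order to invoke the cited usco corollary. The trade-off is that the paper's proof is shorter on the page by leaning on previously established usco machinery, whereas yours is self-contained and in fact proves a slightly more general statement; you also correctly read \(U\) as an open subset of \(X\), which is the intended (and necessary) hypothesis even though the statement does not say so explicitly.
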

\begin{proof}
    Set \(\Phi^* = \overline{f}\) and \(\Psi^* = \overline{g}\). Then
    \(\Phi^*, \Psi^* \in \mathrm{MU}(X,Y)\) by Theorem \ref{thm:HolaHolyUscoChar};
    so \(\Phi^* \subseteq \Phi\) and \(\Psi^* \subseteq \Psi\).
    By \cite[Cor 1.32]{CCUsco}, \(\Phi^\ast\restriction_U = \Psi^\ast\restriction_U\).
    Finally, by Theorem \ref{thm:UscoCuscoCorrespondence}, \(\Phi\restriction_U = \Psi\restriction_U\).
\end{proof}
\begin{corollary} \label{cor:CuscoAgreeOpen}
    Suppose \(X\) is a space and \(Y\) is a Hausdorff locally convex linear space
    in which the closed convex hull of compact set is compact.
    If \(A \subseteq X\) is non-empty, \(U, V \in \mathscr T_X\) are such that
    \(A \subseteq V \subseteq \mathrm{cl}(V) \subseteq U\),
    \(\Phi \in \mathrm{MC}(X,Y)\), and \(f \in \mathrm{sel}_{QS}(\Phi)\) is such that
    \(\Phi = \check{f}\), then, for \(y_0 \in Y\), the map \(g : X \to Y\) defined by
    \[
        g(x) = \begin{cases} y_0, & x \in \mathrm{cl}(X \setminus \mathrm{cl}(V));\\
        f(x), & \mathrm{otherwise},\end{cases}
    \]
    has the property that \(\Psi := \check{g} \in \mathrm{MC}(X,Y)\), \(\langle \Phi, \Psi \rangle \in \mathbf W(A,E)\)
    for any entourage \(E\) of \(Y\), and \(g[X \setminus U] = \{y_0\}\).
\end{corollary}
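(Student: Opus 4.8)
The plan is to recognize that \(g\) is an instance of the function built in Lemma \ref{lem:UsefulFunction}, deduce the cusco and selection properties of \(g\) from that lemma, and then transfer agreement of \(\Phi\) and \(\Psi\) on an open set using Corollary \ref{cor:PreUscoAgreeOpen}.

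First I would set \(W = X \setminus \mathrm{cl}(V)\) and check that \(W \in \mathscr T_X\): it is non-empty because \(\mathrm{cl}(V) \subseteq U \neq X\), and it is proper because \(V \neq \emptyset\). Then \(\mathrm{cl}(W) = \mathrm{cl}(X \setminus \mathrm{cl}(V))\) is exactly the set on which \(g\) takes the value \(y_0\), so \(g\) is the function \(h\) produced by Lemma \ref{lem:UsefulFunction} applied with open set \(W\), where the lemma's constant function is \(x \mapsto y_0\) and the lemma's quasicontinuous/subcontinuous function is our \(f\) (which is both, since \(f \in \mathrm{sel}_{QS}(\Phi)\)). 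Lemma \ref{lem:UsefulFunction} then gives that \(g\) is subcontinuous and quasicontinuous and that \(\check g\) is minimal cusco; hence \(\Psi := \check g \in \mathrm{MC}(X,Y)\), which is the first assertion. Moreover \(g\) is a selection of \(\check g = \Psi\) and is quasicontinuous and subcontinuous, so \(g \in \mathrm{sel}_{QS}(\Psi)\). For the third assertion: if \(x \notin U\) then \(x \notin \mathrm{cl}(V)\), so \(x \in W \subseteq \mathrm{cl}(W)\) and \(g(x) = y_0\); since \(U \neq X\), the set \(X \setminus U\) is non-empty, so \(g[X \setminus U] = \{y_0\}\).

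For the second assertion I would first observe that \(f\) and \(g\) agree on \(V\): since \(V\) is open and \(V \subseteq \mathrm{cl}(V)\), \(V\) is disjoint from \(X \setminus \mathrm{cl}(V)\), hence disjoint from \(\mathrm{cl}(X \setminus \mathrm{cl}(V)) = \mathrm{cl}(W)\); thus \(g(x) = f(x)\) for \(x \in V\), i.e.\ \(f\restriction_V = g\restriction_V\). Now \(f \in \mathrm{sel}_{QS}(\Phi)\), \(g \in \mathrm{sel}_{QS}(\Psi)\), they agree on the open set \(V\), and (by the hypothesis and the previous paragraph) \(\Phi = \check f\) and \(\Psi = \check g\); so Corollary \ref{cor:PreUscoAgreeOpen} yields \(\Phi\restriction_V = \Psi\restriction_V\), and in particular \(\Phi(x) = \Psi(x)\) for every \(x \in A\) since \(A \subseteq V\). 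Finally, for \(x \in A\) both \(\Phi(x)\) and \(\Psi(x)\) lie in \(K(Y)\), and since every entourage \(E\) contains \(\Delta_Y\) we get \(\Phi(x) \subseteq E[\Phi(x)] = E[\Psi(x)]\) and symmetrically; hence \(\langle \Phi(x), \Psi(x) \rangle \in hE\) for all \(x \in A\), i.e.\ \(\langle \Phi, \Psi \rangle \in \mathbf W(A,E)\).

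The argument is essentially bookkeeping, so the main things to get right are structural rather than deep: correctly matching the two-case definition of \(g\) with the template of Lemma \ref{lem:UsefulFunction} (noting that the constant value \(y_0\) plays the role of the lemma's constant function while our selection \(f\) plays the role of its quasicontinuous function, and that the relevant open set is \(W = X \setminus \mathrm{cl}(V)\), whose membership in \(\mathscr T_X\) must be justified via \(\mathrm{cl}(V) \subseteq U \neq X\)), and carrying out the one genuinely topological step, namely that \(V\) is disjoint from \(\mathrm{cl}(X \setminus \mathrm{cl}(V))\), which is precisely what forces \(f\) and \(g\) to agree on \(V\) and thereby drives the \(\mathbf W(A,E)\) conclusion.
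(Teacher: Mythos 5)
Your proposal is correct and follows essentially the same route as the paper: apply Lemma \ref{lem:UsefulFunction} (with the constant function \(y_0\) and the open set \(X \setminus \mathrm{cl}(V)\)) to get that \(g\) is subcontinuous and quasicontinuous and \(\check g \in \mathrm{MC}(X,Y)\), observe that \(\mathrm{cl}(X\setminus\mathrm{cl}(V))\) misses \(V\) so \(f\) and \(g\) agree on \(V \supseteq A\), invoke Corollary \ref{cor:PreUscoAgreeOpen}, and note \(X\setminus U \subseteq \mathrm{cl}(X\setminus\mathrm{cl}(V))\) for the last claim. The only differences are that you spell out a few steps the paper leaves implicit (membership of \(X\setminus\mathrm{cl}(V)\) in \(\mathscr T_X\), \(g \in \mathrm{sel}_{QS}(\Psi)\), and that equality \(\Phi\restriction_A = \Psi\restriction_A\) gives \(\langle\Phi,\Psi\rangle \in \mathbf W(A,E)\) since \(\Delta_Y \subseteq E\)), which is fine.
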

\begin{proof}
    Note that Lemma \ref{lem:UsefulFunction} implies that \(g\) is subcontinuous and quasicontinuous.
    Then \(\Psi := \check{g} \in \mathrm{MC}(X,Y)\) by Theorem \ref{thm:HolaHolyCuscoChar}.

    Since \(V\) is open, \(\mathrm{cl}(X \setminus \mathrm{cl}(V)) \subseteq X \setminus V\).
    Then \(g(x) = f(x)\) for all \(x \in V\).
    By Corollary \ref{cor:PreUscoAgreeOpen}, we see that \(\Phi\restriction_A = \Psi\restriction_A\)
    as \(A \subseteq V\).
    Also, since \(X \setminus U \subseteq X \setminus \mathrm{cl}(V) \subseteq \mathrm{cl}(X \setminus \mathrm{cl}(V))\),
    we see that \(g[X \setminus U] = \{y_0\}\).
\end{proof}

Since every linear space is a topological group, we can use the uniform structure
generated by the neighborhoods of the identity;
that is, the sets of the form
\[U_R = \{ \langle x , y \rangle \in X : xy^{-1} \in U \}\]
where \(U\) is a neighborhood of the identity form a base
for a uniformity on a topological group \(X\).
We can also restrict our attention to a particular basis at the identity to produce
this uniform structure.
Note that a closed neighborhood of identity generates a closed entourage as viewed as a
subset of \(X^2\) with the product topology.

The following can be seen as a modification of \cite[Lemma 6.1]{HolaNovotny}.
We provide a full proof for the convenience of the reader.

\begin{corollary} \label{cor:AlmostLikeContinuous}
    Let \(X\) be a space and \(Y\) be a Hausdorff locally convex linear space.
    Suppose \(\Phi,\Psi \in \mathrm{MC}(X,Y)\), \(E\) is a closed convex neighborhood of \(0_Y\),
    \(D \subseteq X\) is dense, and
    \(\langle \Phi(x),\Psi(x) \rangle \in hE_R\) for all \(x \in D\).
    Then \(\langle \Phi(x), \Psi(x) \rangle \in hE_R\)
    for all \(x \in X\).
\end{corollary}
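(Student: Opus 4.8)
The plan is to exploit the characterization of minimal cusco maps via their quasicontinuous, subcontinuous selections (Theorem \ref{thm:HolaHolyCuscoChar}) together with the pointwise/hyperspace structure of the entourages $hE_R$. First I would fix $f \in \mathrm{sel}_{QS}(\Phi)$ and $g \in \mathrm{sel}_{QS}(\Psi)$, so that $\Phi = \check f$ and $\Psi = \check g$. The hypothesis $\langle \Phi(x), \Psi(x)\rangle \in hE_R$ for $x \in D$ says exactly that $\Phi(x) \subseteq E_R[\Psi(x)]$ and $\Psi(x) \subseteq E_R[\Phi(x)]$ on the dense set $D$; unwinding the group uniformity, this means $\Phi(x) \subseteq \Psi(x) + E$ and $\Psi(x) \subseteq \Phi(x) + E$ for $x \in D$ (using that $E$ is a neighborhood of $0_Y$). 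The goal is to propagate these two containments from $D$ to all of $X$.

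The key step is to fix an arbitrary $x_0 \in X$ and show $\Phi(x_0) \subseteq \Psi(x_0) + E$ (the other containment is symmetric). Since $\Phi = \check f = \mathrm{cl}\,\mathrm{conv}\,\overline{f}$ and $\Psi(x_0) + E$ is closed and convex (here is where closedness and convexity of $E$ enter), it suffices to show $\overline{f}(x_0) \subseteq \Psi(x_0) + E$, i.e. that every point of $\mathrm{cl}\,\mathrm{gr}(f)$ lying over $x_0$ lands in $\Psi(x_0)+E$. So take $y \in \overline f(x_0)$; then $\langle x_0, y\rangle \in \mathrm{cl}\,\mathrm{gr}(f)$, so there is a net $\langle x_\lambda \rangle$ in the domain of $f$ with $x_\lambda \to x_0$ and $f(x_\lambda) \to y$. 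The plan is to perturb this net to land inside the dense set $D$: using quasicontinuity of $f$ (which makes $f^{-1}$ of neighborhoods semi-open, hence with dense interior) together with density of $D$, I can replace each $x_\lambda$ by a nearby point $x_\lambda' \in D$ with $f(x_\lambda')$ close to $f(x_\lambda)$, so that still $x_\lambda' \to x_0$ and $f(x_\lambda') \to y$ — essentially the standard argument that a quasicontinuous function's graph closure is determined by any dense subset of its domain. On $D$ we know $f(x_\lambda') \in \Phi(x_\lambda') \subseteq \Psi(x_\lambda') + E$, so pick $z_\lambda \in \Psi(x_\lambda')$ with $f(x_\lambda') - z_\lambda \in E$.

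Now I would use subcontinuity of the selection $g$ of $\Psi$ — or rather that any selection of the usco map underlying $\Psi$ is subcontinuous (Lemma \ref{lem:SelectionsAreSubcontinuous} applied to the minimal usco map $\varphi^{-1}(\Psi)$, whose selections are exactly selections of $\Psi$) — to extract an accumulation point $z$ of the net $\langle z_\lambda\rangle$; passing to a subnet, $z_\lambda \to z$ and $\langle x_\lambda', z_\lambda \rangle$ lies in $\mathrm{gr}$ of that selection, so $z \in \overline{(\text{selection})}(x_0) \subseteq \Psi(x_0)$ since $\Psi$ is usco (upper semicontinuity forces graph points over $x_0$ into the compact value $\Psi(x_0)$). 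Along this subnet, $f(x_\lambda') - z_\lambda \to y - z$, and since $E$ is closed, $y - z \in E$, so $y = z + (y-z) \in \Psi(x_0) + E$. This gives $\overline f(x_0) \subseteq \Psi(x_0) + E$, hence $\Phi(x_0) \subseteq \Psi(x_0) + E$ after taking closed convex hull, i.e. $\Phi(x_0) \subseteq E_R[\Psi(x_0)]$; the symmetric argument gives $\Psi(x_0) \subseteq E_R[\Phi(x_0)]$, so $\langle \Phi(x_0), \Psi(x_0)\rangle \in hE_R$.

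**Main obstacle.** I expect the delicate point to be the net-perturbation step that moves $\langle x_\lambda, f(x_\lambda)\rangle$ into $D$ while keeping convergence to $\langle x_0, y\rangle$: making this precise requires care with the interplay of quasicontinuity (semi-openness of preimages) and density, and with indexing a possibly larger directed set. A clean alternative is to cite Corollary \ref{cor:AlmostLikeContinuous}'s ancestor, namely the already-established fact that $\check f$ (equivalently $\overline f$) is determined on a dense set — concretely, that for quasicontinuous subcontinuous $f$, the closure of $\mathrm{gr}(f)$ equals the closure of $\mathrm{gr}(f\restriction_{D'})$ for any dense $D' \subseteq X$ with $D'$ contained in a suitable dense-in-itself part of the domain — which follows from Theorem \ref{thm:HolaHolyCuscoChar}\ref{tfae:SomeDenseSubQuasi}. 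Assuming that, the whole argument reduces to the closed-and-convex limiting argument above, and the rest is routine manipulation of the group uniformity $E_R$ and the Hausdorff entourage $hE_R$.
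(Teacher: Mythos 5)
Your plan is correct in substance, but it is organized differently from the paper's proof, so a comparison is worthwhile. The paper argues globally: it forms the auxiliary multifunction \(F(x) = E_R[\Phi(x)]\), which has closed convex values, proves that \(\mathrm{gr}(F)\) is closed using subcontinuity of selections of \(\Phi\) (Lemma \ref{lem:SelectionsAreSubcontinuous}) together with closedness of \(E_R\), and then takes a densely defined quasicontinuous, subcontinuous selection \(g\) of \(\Psi\) on \(D\) furnished by Theorem \ref{thm:HolaHolyCuscoChar}; the hypothesis places \(\mathrm{gr}(g)\) inside \(\mathrm{gr}(F)\), and closedness of \(\mathrm{gr}(F)\) plus convexity of its values yield \(\Psi = \check g \subseteq F\) pointwise, with the symmetric argument finishing. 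You instead argue pointwise at each \(x_0\), using a globally defined \(f \in \mathrm{sel}_{QS}(\Phi)\) and pushing a net witnessing \(y \in \overline f(x_0)\) into \(D\) via quasicontinuity of \(f\) plus density of \(D\) — in effect re-proving that the graph closure of a quasicontinuous map is determined by any dense subset of its domain, which is exactly the role the densely-defined-selection clause plays in the paper — and then using usco-ness of \(\Psi\) (compact values and upper semicontinuity give the accumulation point and force it into \(\Psi(x_0)\)) and closedness of \(E\) before taking closed convex hulls. The essential ingredients (closedness and convexity of \(E\), subcontinuity/compactness to extract accumulation points, \(\Phi = \check f\)) coincide; your version trades the paper's single closed-graph computation for a net-perturbation argument whose indexing you rightly flag as the delicate point, but which is standard and does go through. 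Three small repairs: with the paper's convention \(E_R[w] = \{v : w - v \in E\}\) the dense-set hypothesis reads \(\Phi(x) \subseteq \Psi(x) - E\) and \(\Psi(x) \subseteq \Phi(x) - E\), so run your argument with \(-E\) (again a closed convex neighborhood of \(0_Y\)); the parenthetical that selections of \(\varphi^{-1}(\Psi)\) are exactly the selections of \(\Psi\) is false (only an inclusion holds), but it is also unnecessary, since cusco maps are usco and Lemma \ref{lem:SelectionsAreSubcontinuous} applies to \(\Psi\) directly; and your fallback in the last paragraph is shakier than your main line — the ``dense-in-itself'' caveat is spurious, and the dense-restriction fact is best justified either by your net argument or by minimality (the cusco map \(\check{(f\restriction_D)}\) is contained in the minimal cusco \(\Phi\), hence equals it), rather than by citing Theorem \ref{thm:HolaHolyCuscoChar} alone.
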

\begin{proof}
  Define \(F : X \to \wp(Y)\) by \(F(x) = E_R[\Phi(x)]\)
  and note that \(F(x)\) is convex and closed for each \(x \in X\).

  We show that the graph of \(F\) is closed.
  Suppose \(\langle x, y \rangle \in \mathrm{cl}\ \mathrm{gr}(F)\)
  and let \(\langle \langle x_\lambda , y_\lambda \rangle : \lambda \in \Lambda \rangle\)
  be a net in \(\mathrm{gr}(F)\) so that \(\langle x_\lambda, y_\lambda \rangle \to \langle x , y \rangle\).
  Since \(y_\lambda \in E_R[\Phi(x_\lambda)]\), we can let
  \(w_\lambda \in \Phi(x_\lambda)\) be so that \(y_\lambda \in E_R[w_\lambda]\).
  Observe that, since \(x_\lambda \to x\) and \(w_\lambda \in \Phi(x_\lambda)\) for each \(\lambda \in \Lambda\),
  by Lemma \ref{lem:SelectionsAreSubcontinuous},
  \(\langle w_\lambda : \lambda \in \Lambda \rangle\) has an accumulation point \(w \in \Phi(x)\).
  Since \(y_\lambda \to y\) and \(w\) is an accumulation point of \(\langle w_\lambda : \lambda \in \Lambda \rangle\),
  \(\langle w, y\rangle\) is an accumulation point of
  \(\langle \langle w_\lambda , y_\lambda \rangle : \lambda \in \Lambda \rangle\).
  Moreover, as \(\langle w_\lambda , y_\lambda \rangle \in E_R\) for all \(\lambda\in\Lambda\)
  and \(E_R\) is closed, we see that \(\langle w , y \rangle \in E_R\).
  Hence, \(y \in E_R[w] \subseteq E_R[\Phi(x)] = F(x)\).
  That is, \(\langle x, y \rangle \in \mathrm{gr}(F)\) which establishes that
  \(\mathrm{gr}(F)\) is closed.

  By Theorem \ref{thm:HolaHolyCuscoChar}, we can let \(g : D \to Y\) be subcontinuous
  and quasicontinuous so that \(g(x) \in \Psi(x)\) for each \(x\in D\) and \(\Psi = \check{g}\).
  Since \(\mathrm{gr}(F)\) is closed, convex-valued, and \(\mathrm{gr}(g) \subseteq \mathrm{gr}(F)\),
  we see that \(\mathrm{cl}\ \mathrm{gr}(g) \subseteq \mathrm{gr}(F)\)
  and
  \(\check g(x) \subseteq F(x)\).
  That is, \(\Psi(x) \subseteq F(x) = E_R[\Phi(x)]\) for all \(x \in X\).

  A symmetric argument shows that \(\Phi(x) \subseteq E_R[\Psi(x)]\) for all \(x \in X\),
  finishing the proof.
\end{proof}

\section{Results}

For the remainder of the paper, we will be interested only in real set-valued functions;
so we will let \(\mathrm{MU}(X) = \mathrm{MU}(X,\mathbb R)\) and \(\mathrm{MC}(X) = \mathrm{MC}(X,\mathbb R)\).
We also use, for \(\varepsilon > 0\), \[\Delta_\varepsilon = \{ \langle x, y \rangle \in \mathbb R^2 : |x-y| < \varepsilon \}.\]
For \(A \subseteq X\), we will use \(\mathbf U(A,\varepsilon) = \mathbf U(A,\Delta_\varepsilon)\)
and \(\mathbf W(A,\varepsilon) = \mathbf W(A , \Delta_\varepsilon)\).
For \(Y \subseteq \mathbb R\), let \(\mathbb B(Y,\varepsilon) = \bigcup_{y \in Y} B(y,\varepsilon)\) and
note that
\[
    \mathbf W(A, \varepsilon)
     = \left\{ \langle \Phi , \Psi \rangle \in \mathbb K(\mathbb R)^X : (\forall x \in A)
    [\Phi(x) \subseteq \mathbb B(\Psi(x),\varepsilon) \wedge \Psi(x) \subseteq \mathbb B(\Phi(x),\varepsilon)] \right\}.
\]
For \(\Phi : X \to \mathbb K(Y)\), \(A \subseteq X\), and \(\varepsilon > 0\), we let \([\Phi; A , \varepsilon]
= \mathbf W(A,\varepsilon)[\Phi]\).

Then, if \(\mathcal A\) is an ideal of closed subsets of \(X\),
we will use \(\mathrm{MU}_{\mathcal A}(X)\) (resp. \(\mathrm{MC}_{\mathcal A}(X)\)) to denote the set \(\mathrm{MU}(X)\) (resp. \(\mathrm{MC}(X)\))
with the topology generated by the base
for a uniformity \(\{ \mathbf W(A,\varepsilon) : A \in\mathcal A, \varepsilon > 0\}\).
When \(\mathcal A = [X]^{<\omega}\), we use \(\mathrm{MU}_p(X)\) and \(\mathrm{MC}_p(X)\); when \(\mathcal A = K(X)\),
we use \(\mathrm{MU}_k(X)\) and \(\mathrm{MC}_k(X)\).
We will use \(\mathbf 0\) to denote the function that is constantly \(0\) when dealing
with real-valued functions and the function that is constantly \(\{0\}\) when dealing
with set-valued maps.

\begin{theorem} \label{thm:FirstEquivalence}
    Let \(X\) be regular and let \(\mathcal A\) and \(\mathcal B\) be ideals of closed
    subsets of \(X\).
    Then,
    \begin{enumerate}[label=(\roman*)]
        \item \label{thm:RothbergerFirst}
        \(\mathsf G_1(\mathcal O_X(\mathcal A), \Lambda_X(\mathcal B))
        \leq_{\mathrm{II}} \mathsf G_1(\Omega_{\mathrm{MC}_{\mathcal A}(X),\mathbf 0},\Omega_{\mathrm{MC}_{\mathcal B}(X),\mathbf 0})\),
        \item \label{thm:RothbergerSecond}
        \(\mathsf G_1(\Omega_{\mathrm{MC}_{\mathcal A}(X),\mathbf 0},\Omega_{\mathrm{MC}_{\mathcal B}(X),\mathbf 0})
        \leq_{\mathrm{II}} \mathsf G_1(\mathscr D_{\mathrm{MC}_{\mathcal A}(X)},\Omega_{\mathrm{MC}_{\mathcal B}(X),\mathbf 0})\), and
        \item \label{thm:RothbergerThird}
        if \(X\) is \(\mathcal A\)-normal,
        \(\mathsf G_1(\mathscr D_{\mathrm{MC}_{\mathcal A}(X)},\Omega_{\mathrm{MC}_{\mathcal B}(X),\mathbf 0})
        \leq_{\mathrm{II}} \mathsf G_1(\mathcal O_X(\mathcal A), \Lambda_X(\mathcal B))\).
    \end{enumerate}
    Thus, if \(X\) is \(\mathcal A\)-normal, the three games are equivalent.
\end{theorem}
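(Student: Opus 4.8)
The three-part structure is designed to be a cycle, so the plan is to prove each \(\leq_{\mathrm{II}}\) separately, mostly by appealing to the translation machinery (Theorem \ref{TranslationTheorem}) and to the duality/equivalence bookkeeping lemmas. For part \ref{thm:RothbergerFirst}, I would apply Theorem \ref{TranslationTheorem} with \(\mathcal A = \mathcal O_X(\mathcal A)\), \(\mathcal C = \Lambda_X(\mathcal B)\) on the cover side and \(\mathcal B = \Omega_{\mathrm{MC}_{\mathcal A}(X),\mathbf 0}\), \(\mathcal D = \Omega_{\mathrm{MC}_{\mathcal B}(X),\mathbf 0}\) on the function side. The backward translation \(\overleftarrow{T}_{\mathrm{I},n}\) should take a set \(B \in \Omega_{\mathrm{MC}_{\mathcal A}(X),\mathbf 0}\) (a family of minimal cusco maps clustering at \(\mathbf 0\)) and produce an \(\mathcal A\)-cover: for each \(\Phi \in B\) and each relevant \(A \in \mathcal A\), membership in the basic neighborhood \([\mathbf 0; A, 1]\) forces \(\Phi(x) \subseteq (-1,1)\) for \(x \in A\); using a chosen quasicontinuous subcontinuous selection \(f_\Phi \in \mathrm{sel}_{QS}(\Phi)\), the sets \(U_\Phi = \mathrm{int}\{x : \Phi(x) \subseteq (-1,1)\}\) (or preimages under \(f_\Phi\) of small neighborhoods of \(0\), using that \(\Phi = \check{f_\Phi}\)) form the cover. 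The forward translation \(\overrightarrow{T}_{\mathrm{II},n}\) takes such a \(U\) together with \(B\) and returns a \(\Phi \in B\) witnessing \(A \subseteq U\). Conditions (i) and (ii) of the translation theorem then reduce to: if \(x\) (here a cover element, i.e. an open set) is chosen from the cover produced from \(B\), the returned \(\Phi\) lies in \(B\); and if the chosen open sets form a cover in \(\Lambda_X(\mathcal B)\), the returned maps cluster at \(\mathbf 0\) in \(\mathrm{MC}_{\mathcal B}(X)\) — the latter because each \(B \in \mathcal B\) is covered cofinitely, hence infinitely, by Lemma \ref{lem:IdealLargeCovers}, forcing arbitrarily good approximations to \(\mathbf 0\) on \(B\).

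Part \ref{thm:RothbergerSecond} is the cheapest: it should follow immediately from the general inclusion \(\Omega_{Z,z} \supseteq \mathscr D_Z\) for any space \(Z\) and point \(z\) — every dense set clusters at every point — so that One's moves in \(\mathsf G_1(\mathscr D_{\mathrm{MC}_{\mathcal A}(X)}, -)\) are legal moves in \(\mathsf G_1(\Omega_{\mathrm{MC}_{\mathcal A}(X),\mathbf 0}, -)\); any strategy for Two in the latter transfers verbatim (for all four strategy strengths), giving the \(\leq_{\mathrm{II}}\). I would state this as a one-line inclusion-of-moves argument, possibly noting it is an instance of the general principle that shrinking One's move-set preserves all of Two's winning assertions.

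Part \ref{thm:RothbergerThird} is where \(\mathcal A\)-normality is needed and is the main obstacle. Here I would again use Theorem \ref{TranslationTheorem}, now going from a dense set \(\mathcal D \in \mathscr D_{\mathrm{MC}_{\mathcal A}(X)}\) back to an \(\mathcal A\)-cover. Given an \(\mathcal A\)-cover \(\mathscr U\) as One's move in the cover game, and \(A \in \mathcal A\), pick \(U \in \mathscr U\) with \(A \subseteq U\); by \(\mathcal A\)-normality interpolate \(A \subseteq V \subseteq \mathrm{cl}(V) \subseteq U\), and use Corollary \ref{cor:CuscoAgreeOpen} with \(y_0 = 0\) to manufacture, from any \(\Phi \in \mathrm{MC}(X)\), a map \(\Psi = \check{g}\) with \(\langle \Phi, \Psi\rangle \in \mathbf W(A,E)\) for every entourage \(E\) and \(g[X \setminus U] = \{0\}\). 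The density of \(\mathcal D\) is exploited to find an actual member of \(\mathcal D\) inside a prescribed basic neighborhood of such a \(\Psi\); the backward translation outputs this neighborhood-radius data as (a refinement determining) the cover, and the forward translation extracts from \(\mathcal D\) the witnessing map, whose "support" is contained in the chosen \(U \in \mathscr U\). Then if Two's selected maps form a set in \(\Omega_{\mathrm{MC}_{\mathcal B}(X),\mathbf 0}\), i.e. cluster at \(\mathbf 0\), one reads off that the associated opens form a cover in \(\Lambda_X(\mathcal B)\): for each \(B \in \mathcal B\) and each \(n\), the clustering gives maps agreeing with \(\mathbf 0\) to within \(1/n\) on \(B\), which (using Corollary \ref{cor:PreUscoAgreeOpen} / \ref{cor:CuscoAgreeOpen} to locate where the map is nonzero) forces \(B\) to sit inside the corresponding \(U\), and infinitely often by the clustering. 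The delicate points are (a) threading the \(\varepsilon\)-radii and the turn index \(n\) through \(\overleftarrow{T}_{\mathrm{I},n}, \overrightarrow{T}_{\mathrm{II},n}\) so that condition (ii) of the translation theorem genuinely yields a \(\mathcal B\)-cover rather than merely a cover, and (b) ensuring the map pulled from the dense set \(\mathcal D\) — which need not be one of the specially constructed \(\Psi\)'s — still has the property that it is close to \(\mathbf 0\) off \(U\); this is handled by choosing the basic neighborhood of \(\Psi\) small enough on the relevant \(A\) and invoking Corollary \ref{cor:AlmostLikeContinuous} to propagate closeness from a dense subset of \(X\) to all of \(X\). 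Finally, chaining \ref{thm:RothbergerFirst}--\ref{thm:RothbergerThird} through transitivity of \(\leq_{\mathrm{II}}\) gives a cycle of \(\leq_{\mathrm{II}}\)'s among all three games, and since \(\mathcal G \leq_{\mathrm{II}} \mathcal H \leq_{\mathrm{II}} \mathcal G\) implies \(\mathcal G \equiv \mathcal H\), the three games are equivalent whenever \(X\) is \(\mathcal A\)-normal.
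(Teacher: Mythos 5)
Your parts \ref{thm:RothbergerFirst} and \ref{thm:RothbergerSecond} follow the paper's route: translate a family clustering at \(\mathbf 0\) into the \(\mathcal A\)-cover of preimages of small intervals, and get \ref{thm:RothbergerSecond} from the inclusion \(\mathscr D_{\mathrm{MC}_{\mathcal A}(X)} \subseteq \Omega_{\mathrm{MC}_{\mathcal A}(X),\mathbf 0}\). One repair is needed in \ref{thm:RothbergerFirst}: you keep the radius fixed at \(1\) (neighborhoods \([\mathbf 0;A,1]\), sets \(\{x : \Phi(x)\subseteq(-1,1)\}\)). With a fixed radius, the hypothesis that each \(B\in\mathcal B\) lies in infinitely many of the selected open sets only yields infinitely many \(\Phi_n\in[\mathbf 0;B,1]\), which does not give clustering at \(\mathbf 0\); the radius must shrink with the turn index, e.g. \(W_{\Phi,n}=\Phi^\leftarrow[(-2^{-n},2^{-n})]\) as in the paper, so that \(B\subseteq U_n\) for arbitrarily large \(n\) produces arbitrarily good approximations. (One must also handle the degenerate case \(W_{\Phi,n}=X\), since members of \(\mathcal O_X(\mathcal A)\) are non-trivial covers.)

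The genuine gap is in \ref{thm:RothbergerThird}, and it is the choice \(y_0=0\). If the maps you manufacture from \(U\in\mathscr U\) have a selection equal to \(0\) off \(U\), then the verification step collapses: a map vanishing off \(U\) is close to \(\mathbf 0\) on \(B\) whether or not \(B\subseteq U\), so clustering at \(\mathbf 0\) "locates" nothing. Concretely, \(\mathbf 0\) itself belongs to your translated dense set for every cover, so Two's dense-set strategy may always return \(\mathbf 0\), and no definition of \(\overrightarrow{T}_{\mathrm{II},n}\) can then recover a \(\mathcal B\)-cover; condition (ii) of Theorem \ref{TranslationTheorem} fails. The marker value must be bounded away from \(0\): take \(y_0=1\), let \(\overleftarrow{T}_{\mathrm I,n}(\mathscr U)\) be the set of \(\Phi\in\mathrm{MC}(X)\) admitting \(f\in\mathrm{sel}_{QS}(\Phi)\) with \(f[X\setminus U]=\{1\}\) for some \(U\in\mathscr U\) (dense by \(\mathcal A\)-normality, Remark \ref{rmk:MaximumSelection} and Corollary \ref{cor:CuscoAgreeOpen}), and let the forward map read off that \(U\); then \(\Phi_n\in[\mathbf 0;B,1]\) forces \(f(x)\in(-1,1)\) on \(B\), hence \(B\cap(X\setminus U_n)=\emptyset\), i.e. \(B\subseteq U_n\), and Lemma \ref{lem:IdealLargeCovers} upgrades the resulting \(\mathcal B\)-cover to \(\Lambda_X(\mathcal B)\). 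Your auxiliary device — pulling a nearby member out of an ambient dense set \(\mathcal D\) and invoking Corollary \ref{cor:AlmostLikeContinuous} — neither rescues the \(y_0=0\) version nor is it needed: in this direction of the translation you construct the dense set yourself, so you may take it to consist exactly of the structured maps; moreover, basic neighborhoods in \(\mathrm{MC}_{\mathcal A}(X)\) only control values on members of \(\mathcal A\), while Corollary \ref{cor:AlmostLikeContinuous} needs closeness on a dense subset of \(X\), which a member of \(\mathcal A\) generally is not, so closeness on \(A\) cannot be propagated to control behavior off \(U\).
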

\begin{proof}
  We first address \ref{thm:RothbergerFirst}.
  Fix some \(\mathscr U_0 \in \mathcal O_X(\mathcal A)\) and let \(W_{\Phi,n} = \Phi^\leftarrow[(-2^{-n},2^{-n})]\)
  for \(\Phi \in \mathrm{MC}(X)\) and \(n \in \omega\).
  Let \[\mathfrak T_n = \{ \mathscr F \in \Omega_{\mathrm{MC}_{\mathcal A}(X),\mathbf 0} : (\exists \Phi \in \mathscr F)\ W_{\Phi,n} = X \}\]
  and \(\mathfrak T_n^\star = \Omega_{\mathrm{MC}_{\mathcal A}(X),\mathbf 0} \setminus \mathfrak T_n\).
  Define \(\overleftarrow{T}_{\mathrm{I},n} : \Omega_{\mathrm{MC}_{\mathcal A}(X),\mathbf 0} \to \mathcal O_X(\mathcal A)\)
  by the rule
  \[\overleftarrow{T}_{\mathrm{I},n}(\mathscr F) =
  \begin{cases}
    \{ W_{\Phi,n} : \Phi \in \mathscr F \}, & \mathscr F \in \mathfrak T_n^\star;\\
    \mathscr U_0, & \mathrm{otherwise}.
  \end{cases}\]
  To see that \(\overleftarrow{T}_{\mathrm{I},n}\) is defined, let \(\mathscr F \in \mathfrak T_n^\star\).
  Let \(A \in \mathcal A\) be arbitrary and choose \(\Phi \in [\mathbf 0;A,2^{-n}] \cap \mathscr F\).
  It follows that \(A \subseteq W_{\Phi,n}\).
  Hence, \(\overleftarrow{T}_{\mathrm{I},n}(\mathscr F) \in \mathcal O_X(\mathcal A)\).

  We now define
  \[\overrightarrow{T}_{\mathrm{II},n} : \mathscr T_X \times \Omega_{\mathrm{MC}_{\mathcal A}(X),\mathbf 0} \to \mathrm{MC}(X)\]
  in the following way.
  For each \(\langle U ,\mathscr F \rangle \in \mathscr T_X \times \mathfrak T_n\), let
  \(\overrightarrow{T}_{\mathrm{II},n}(U,\mathscr F) \in \mathscr F\) be so that
  \(W_{\overrightarrow{T}_{\mathrm{II},n}(U,\mathscr F),n} = X\).
  For \(\langle U, \mathscr F \rangle \in \mathscr T_X \times \mathfrak T_n^\star\) so that
  \(U \in \overleftarrow{T}_{\mathrm{I},n}(\mathscr F)\), let \(\overrightarrow{T}_{\mathrm{II},n}(U,\mathscr F) \in \mathscr F\)
  be so that \(U = W_{\overrightarrow{T}_{\mathrm{II},n}(U,\mathscr F),n}\).
  For \(\langle U, \mathscr F \rangle \in \mathscr T_X \times \mathfrak T_n^\star\) so that
  \(U \not\in \overleftarrow{T}_{\mathrm{I},n}(\mathscr F)\), let \(\overrightarrow{T}_{\mathrm{II},n}(U,\mathscr F) = \mathbf 0\).
  By construction, if \(U \in \overleftarrow{T}_{\mathrm{I},n}(\mathscr F)\),
  then \(\overrightarrow{T}_{\mathrm{II},n}(U,\mathscr F) \in \mathscr F\).

  To finish this application of Theorem \ref{TranslationTheorem}, assume that we have
  \[\langle U_n : n \in \omega \rangle \in \prod_{n\in\omega} \overleftarrow{T}_{\mathrm{I},n}(\mathscr F_n)\]
  for some sequence \(\langle \mathscr F_n : n \in \omega \rangle\) of \(\Omega_{\mathrm{MC}_{\mathcal B}(X),\mathbf 0}\)
  so that \(\{ U_n : n \in \omega \} \in \Lambda_X(\mathcal B)\).
  For each \(n \in \omega\), let \(\Phi_n = \overrightarrow{T}_{\mathrm{II},n}(U_n,\mathscr F_n)\).
  Now, let \(B \in \mathcal B\) and \(\varepsilon > 0\) be arbitrary.
  Choose \(n \in \omega\) so that \(2^{-n} < \varepsilon\) and \(B \subseteq U_n\).
  If \(\mathscr F_n \in \mathfrak T_n\), then \(\Phi_n\) has the property that \(X = \Phi_n^\leftarrow[(-2^{-n},2^{-n})]\);
  hence, \(\Phi_n \in [\mathbf 0; B, \varepsilon]\).
  Otherwise, \(B \subseteq U_n = \Phi_n^\leftarrow[(-2^{-n},2^{-n})]\) which also implies that
  \(\Phi_n \in [\mathbf 0; B , \varepsilon]\).
  Thus, \(\{ \Phi_n : n \in \omega \} \in \Omega_{\mathrm{MC}_{\mathcal B}(X),\mathbf 0}\).

  \ref{thm:RothbergerSecond} holds since \(\mathscr D_{\mathrm{MC}_{\mathcal A}(X)} \subseteq
  \Omega_{\mathrm{MC}_{\mathcal A}(X),\mathbf 0}\).

  Lastly, we address \ref{thm:RothbergerThird}.
  We define
  \(\overset{\leftarrow}{T}_{\mathrm I,n} : \mathcal O_X(\mathcal A) \to \mathscr D_{\mathrm{MC}_{\mathcal A}(X)}\)
  by the rule \[\overset{\leftarrow}{T}_{\mathrm I,n}(\mathscr U)
  = \{ \Phi \in \mathrm{MC}(X) : (\exists U \in \mathscr U)(\exists f \in \mathrm{sel}_{QS}(\Phi))\ f[X \setminus U] = \{1\} \}.\]
  To see that \(\overset{\leftarrow}{T}_{\mathrm I,n}\) is defined, let
  \(\mathscr U \in \mathcal O_X(\mathcal A)\) and consider a basic open set
  \([\Phi ; A , \varepsilon ]\).
  Then let \(U \in \mathscr U\) be so that \(A \subseteq U\) and, by \(\mathcal A\)-normality,
  let \(V\) be open so that \(A \subseteq V \subseteq \mathrm{cl}(V) \subseteq U\).
  Define \(f : X \to \mathbb R\) by the rule
  \[f(x) =
  \begin{cases}
    1, & x \in \mathrm{cl}(X \setminus \mathrm{cl}(V));\\
    \max \Phi(x), & \mathrm{otherwise}.
  \end{cases}
  \]
  By Remark \ref{rmk:MaximumSelection} and Corollary \ref{cor:CuscoAgreeOpen},
  \(\check{f} \in [\Phi; A, \varepsilon] \cap \overset{\leftarrow}{T}_{\mathrm I,n}(\mathscr U)\).
  So \(\overset{\leftarrow}{T}_{\mathrm I,n}(\mathscr U) \in \mathscr D_{\mathrm{MC}_{\mathcal A}(X)}\).

  We define \(\overset{\rightarrow}{T}_{\mathrm{II},n} : \mathrm{MC}(X) \times \mathcal O_X(\mathcal A) \to \mathscr T_X\)
  in the following way.
  Fix some \(U_0 \in \mathscr T_X\).
  For \(\langle \Phi , \mathscr U \rangle \in \mathrm{MC}(X) \times \mathcal O_X(\mathcal A)\),
  if \[\{ U \in \mathscr U : (\exists f \in \mathrm{sel}_{QS}(\Phi))\ f[X \setminus U] = \{1\} \} \neq \emptyset,\]
  let \(\overset{\rightarrow}{T}_{\mathrm{II},n}(\Phi, \mathscr U) \in \mathscr U\) be so that
  there exists \(f \in \mathrm{sel}_{QS}(\Phi)\) with the property that
  \(f[X \setminus \overset{\rightarrow}{T}_{\mathrm{II},n}(\Phi, \mathscr U)] = \{1\};\)
  otherwise, let \(\overset{\rightarrow}{T}_{\mathrm{II},n}(\Phi, \mathscr U) = U_0\).
  By construction, if \(\Phi \in \overset{\leftarrow}{T}_{\mathrm I,n}(\mathscr U)\),
  then \(\overset{\rightarrow}{T}_{\mathrm{II},n}(\Phi, \mathscr U) \in \mathscr U\).

  Suppose we have
  \[\langle \Phi_n : n \in \omega \rangle \in \prod_{n\in\omega} \overset{\leftarrow}{T}_{\mathrm I,n}(\mathscr U_n)\]
  for a sequence \(\langle \mathscr U_n : n \in \omega\rangle\) of \(\mathcal O_X(\mathcal A)\)
  with the property that \(\{ \Phi_n : n \in \omega \} \in \Omega_{\mathrm{MC}_{\mathcal B}(X),\mathbf 0}\).
  For each \(n \in \omega\), let \(U_n = \overset{\rightarrow}{T}_{\mathrm{II},n}(\Phi_n, \mathscr U_n)\).
  Since \(\mathcal B\) is an ideal of sets, we need only show that \(\langle U_n : n \in \omega \rangle\)
  is a \(\mathcal B\)-cover.
  So let \(B \in \mathcal B\) be arbitrary and let \(n \in \omega\) be so that
  \(\Phi_n \in [\mathbf 0; B, 1]\).
  Then we can let \(f \in \mathrm{sel}_{QS}(\Phi_n)\) be so that \(f[X \setminus U_n] = \{1\}\).
  Since \(\Phi_n \in [\mathbf 0;B,1]\), we see that, for each \(x \in B\), \(f(x) \in \Phi_n(x) \subseteq (-1,1)\).
  Hence, \(B \cap (X \setminus U_n) = \emptyset\), which is to say that \(B \subseteq U_n\).
  So Theorem \ref{TranslationTheorem} applies.
\end{proof}
\begin{corollary} \label{cor:Rothberger}
    Let \(\mathcal A\) and \(\mathcal B\) be ideals of closed subsets of \(X\) and suppose that
    \(X\) is \(\mathcal A\)-normal.
    Then
    \begin{align*}
        \mathcal G := \mathsf G_1(\mathcal O_X(\mathcal A), \mathcal O_X(\mathcal B))
        &\equiv \mathsf G_1(\Omega_{\mathrm{MC}_{\mathcal A}(X),\mathbf 0},\Omega_{\mathrm{MC}_{\mathcal B}(X),\mathbf 0})\\
        &\equiv \mathsf G_1(\mathscr D_{\mathrm{MC}_{\mathcal A}(X)},\Omega_{\mathrm{MC}_{\mathcal B}(X),\mathbf 0}),
    \end{align*}
    \begin{align*}
        \mathcal H := \mathsf G_1(\mathscr N_X[\mathcal A], \neg \mathcal O_X(\mathcal B))
        &\equiv \mathsf G_1(\mathscr N_{\mathrm{MC}_{\mathcal A}(X),\mathbf 0}, \neg \Omega_{\mathrm{MC}_{\mathcal B}(X),\mathbf 0})\\
        &\equiv \mathsf G_1(\mathscr T_{\mathrm{MC}_{\mathcal A}(X)}, \neg \Omega_{\mathrm{MC}_{\mathcal B}(X),\mathbf 0}),
    \end{align*}
    and \(\mathcal G\) is dual to \(\mathcal H\).
\end{corollary}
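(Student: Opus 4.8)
The plan is to assemble Corollary \ref{cor:Rothberger} entirely from machinery already in place, with Theorem \ref{thm:FirstEquivalence} doing the substantive work. First I would observe that, by Lemma \ref{lem:IdealLargeCovers}, since \(\mathcal A\) and \(\mathcal B\) are ideals of closed sets we have \(\mathcal O_X(\mathcal A) = \Lambda_X(\mathcal A)\) and \(\mathcal O_X(\mathcal B) = \Lambda_X(\mathcal B)\); hence the game \(\mathsf G_1(\mathcal O_X(\mathcal A),\mathcal O_X(\mathcal B))\) appearing in the statement is literally the game \(\mathsf G_1(\mathcal O_X(\mathcal A),\Lambda_X(\mathcal B))\) that Theorem \ref{thm:FirstEquivalence} concerns. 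Since \(X\) is assumed \(\mathcal A\)-normal (and therefore regular), Theorem \ref{thm:FirstEquivalence} gives the chain of \(\leq_{\mathrm{II}}\) relations among the three games
\[\mathsf G_1(\mathcal O_X(\mathcal A),\Lambda_X(\mathcal B)),\quad \mathsf G_1(\Omega_{\mathrm{MC}_{\mathcal A}(X),\mathbf 0},\Omega_{\mathrm{MC}_{\mathcal B}(X),\mathbf 0}),\quad \mathsf G_1(\mathscr D_{\mathrm{MC}_{\mathcal A}(X)},\Omega_{\mathrm{MC}_{\mathcal B}(X),\mathbf 0}),\]
closing the loop, so all three are \(\equiv\)-equivalent. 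That establishes the first displayed chain, which is \(\mathcal G\).

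Next I would handle \(\mathcal H\) by dualizing each of the three games in the \(\mathcal G\)-chain. For the first, Lemma \ref{lem:DualGames}(i) says \(\mathsf G_1(\mathcal O_X(\mathcal A),\mathcal O_X(\mathcal B))\) is dual to \(\mathsf G_1(\mathscr N_X[\mathcal A],\neg\mathcal O_X(\mathcal B))\) (again using \(\mathcal O_X(\mathcal B)=\Lambda_X(\mathcal B)\) to match the form in the lemma, and that \(\mathcal A\) is an ideal of closed sets as required). For the second, Lemma \ref{lem:DualGames}(iii) with \(x = \mathbf 0\) in the space \(\mathrm{MC}_{\mathcal A}(X)\) and \(\mathcal B := \Omega_{\mathrm{MC}_{\mathcal B}(X),\mathbf 0}\) gives that \(\mathsf G_1(\Omega_{\mathrm{MC}_{\mathcal A}(X),\mathbf 0},\Omega_{\mathrm{MC}_{\mathcal B}(X),\mathbf 0})\) is dual to \(\mathsf G_1(\mathscr N_{\mathrm{MC}_{\mathcal A}(X),\mathbf 0},\neg\Omega_{\mathrm{MC}_{\mathcal B}(X),\mathbf 0})\). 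For the third, Lemma \ref{lem:DualGames}(ii) in the space \(\mathrm{MC}_{\mathcal A}(X)\), with the same \(\mathcal B\), gives that \(\mathsf G_1(\mathscr D_{\mathrm{MC}_{\mathcal A}(X)},\Omega_{\mathrm{MC}_{\mathcal B}(X),\mathbf 0})\) is dual to \(\mathsf G_1(\mathscr T_{\mathrm{MC}_{\mathcal A}(X)},\neg\Omega_{\mathrm{MC}_{\mathcal B}(X),\mathbf 0})\). Now invoke Lemma \ref{lem:BasicEquivDual}: since each game in the \(\mathcal G\)-chain is equivalent to the next and each is dual to its counterpart in the \(\mathcal H\)-chain, the counterparts are pairwise equivalent, giving the second displayed chain. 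Finally, \(\mathcal G\) is dual to \(\mathcal H\) because \(\mathcal G = \mathsf G_1(\mathcal O_X(\mathcal A),\mathcal O_X(\mathcal B))\) is dual to \(\mathsf G_1(\mathscr N_X[\mathcal A],\neg\mathcal O_X(\mathcal B)) = \mathcal H\) by the same application of Lemma \ref{lem:DualGames}(i).

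The only real bookkeeping subtlety — and the step I would be most careful about — is keeping the two meanings of the symbol ``\(\mathcal B\)'' apart: in the hypotheses of Corollary \ref{cor:Rothberger} it is an ideal of closed subsets of \(X\), whereas in Lemma \ref{lem:DualGames}(ii)(iii) the letter \(\mathcal B\) denotes the arbitrary ``target'' collection of the selection game, which here I am instantiating as \(\Omega_{\mathrm{MC}_{\mathcal B}(X),\mathbf 0}\). Once that is stated explicitly there is no genuine obstacle; everything else is a mechanical concatenation of the cited lemmas, with Lemma \ref{lem:IdealLargeCovers} used silently to rewrite \(\Lambda\) as \(\mathcal O\) wherever needed. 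I do not anticipate needing \(\mathcal A\)-normality anywhere except through its single use inside Theorem \ref{thm:FirstEquivalence}\ref{thm:RothbergerThird}.

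\begin{proof}
    By Lemma \ref{lem:IdealLargeCovers}, since \(\mathcal A\) and \(\mathcal B\) are ideals of closed sets,
    \(\mathcal O_X(\mathcal A) = \Lambda_X(\mathcal A)\) and \(\mathcal O_X(\mathcal B) = \Lambda_X(\mathcal B)\).
    Hence \(\mathcal G = \mathsf G_1(\mathcal O_X(\mathcal A), \Lambda_X(\mathcal B))\), and, since \(X\) is
    \(\mathcal A\)-normal (and therefore regular), Theorem \ref{thm:FirstEquivalence} shows that
    \[\mathsf G_1(\mathcal O_X(\mathcal A), \Lambda_X(\mathcal B))
    \leq_{\mathrm{II}} \mathsf G_1(\Omega_{\mathrm{MC}_{\mathcal A}(X),\mathbf 0},\Omega_{\mathrm{MC}_{\mathcal B}(X),\mathbf 0})
    \leq_{\mathrm{II}} \mathsf G_1(\mathscr D_{\mathrm{MC}_{\mathcal A}(X)},\Omega_{\mathrm{MC}_{\mathcal B}(X),\mathbf 0})
    \leq_{\mathrm{II}} \mathsf G_1(\mathcal O_X(\mathcal A), \Lambda_X(\mathcal B)).\]
    Since \(\leq_{\mathrm{II}}\) is transitive, all three games are \(\leq_{\mathrm{II}}\)-equivalent in both
    directions, hence equivalent; this is the first displayed chain.

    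For the dual games, we instantiate the arbitrary target collection ``\(\mathcal B\)'' appearing in
    Lemma \ref{lem:DualGames} as \(\Omega_{\mathrm{MC}_{\mathcal B}(X),\mathbf 0}\) (not to be confused with the
    ideal \(\mathcal B\) of the present statement).
    By Lemma \ref{lem:DualGames}(i), \(\mathcal G = \mathsf G_1(\mathcal O_X(\mathcal A), \Lambda_X(\mathcal B))\)
    is dual to \(\mathsf G_1(\mathscr N_X[\mathcal A], \neg \mathcal O_X(\mathcal B)) = \mathcal H\).
    By Lemma \ref{lem:DualGames}(iii) applied in the space \(\mathrm{MC}_{\mathcal A}(X)\) at the point \(\mathbf 0\),
    \(\mathsf G_1(\Omega_{\mathrm{MC}_{\mathcal A}(X),\mathbf 0},\Omega_{\mathrm{MC}_{\mathcal B}(X),\mathbf 0})\)
    is dual to \(\mathsf G_1(\mathscr N_{\mathrm{MC}_{\mathcal A}(X),\mathbf 0}, \neg \Omega_{\mathrm{MC}_{\mathcal B}(X),\mathbf 0})\).
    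By Lemma \ref{lem:DualGames}(ii) applied in the space \(\mathrm{MC}_{\mathcal A}(X)\),
    \(\mathsf G_1(\mathscr D_{\mathrm{MC}_{\mathcal A}(X)},\Omega_{\mathrm{MC}_{\mathcal B}(X),\mathbf 0})\)
    is dual to \(\mathsf G_1(\mathscr T_{\mathrm{MC}_{\mathcal A}(X)}, \neg \Omega_{\mathrm{MC}_{\mathcal B}(X),\mathbf 0})\).

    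Now apply Lemma \ref{lem:BasicEquivDual}: since the three games of the first displayed chain are pairwise
    equivalent and each is dual to the corresponding game in the second displayed chain, the latter are pairwise
    equivalent as well, establishing the second displayed chain.
    Finally, as noted above, \(\mathcal G\) is dual to \(\mathcal H\).
\end{proof}
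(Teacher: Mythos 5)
Your proof is correct and follows exactly the paper's intended argument: the paper's own proof is just the one-line citation ``Apply Theorem \ref{thm:FirstEquivalence}, Lemmas \ref{lem:IdealLargeCovers}, \ref{lem:BasicEquivDual}, and \ref{lem:DualGames},'' and you have simply spelled out that concatenation, including the correct instantiation of the target collection in Lemma \ref{lem:DualGames} and the observation that \(\mathcal A\)-normality supplies the regularity and the closing link in Theorem \ref{thm:FirstEquivalence}\ref{thm:RothbergerThird}. No gaps.
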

\begin{proof}
    Apply Theorem \ref{thm:FirstEquivalence}, \cite[Lemma 4]{CHContinuousFunctions}, \cite[Lemma 1.12]{CCUsco} and \cite[Lemma 1.13]{CCUsco}.
\end{proof}

We offer the following comments relating this result to other structures.
In \cite[Theorem 31]{CHHyperspaces}, inspired by Li \cite{Li2016}, the game
\(\mathsf G_1(\mathcal O_X(\mathcal A) , \mathcal O_X(\mathcal B))\) is shown to be equivalent
to the selective separability game on certain hyperspaces of \(X\).
Along a similar line, \cite[Cor 14]{CHContinuousFunctions} establishes an analogous result to
Corollary \ref{cor:Rothberger} relative to continuous real-valued functions with the
corresponding topology under the assumption that \(X\) is functionally \(\mathcal A\)-normal (\cite[Def 1.15]{CCUsco}).

\begin{lemma} \label{lem:SequentiallyCompact}
    Suppose \(\Phi \in \mathrm{USCO}(X,\mathbb R)\) and that \(A \subseteq X\) is sequentially compact.
    Then \(\Phi[A]\) is bounded.
\end{lemma}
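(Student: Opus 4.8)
The statement asserts: if $\Phi \in \mathrm{USCO}(X,\mathbb{R})$ and $A \subseteq X$ is sequentially compact, then $\Phi[A] = \bigcup_{x \in A} \Phi(x)$ is a bounded subset of $\mathbb{R}$. I would argue by contradiction. Suppose $\Phi[A]$ is unbounded; without loss of generality assume it is unbounded above (the other case is symmetric, or apply the argument to $-\Phi$). Then for each $n \in \omega$ we can pick $x_n \in A$ and $y_n \in \Phi(x_n)$ with $y_n > n$; in particular $y_n \to +\infty$.

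Next I would use sequential compactness of $A$ to extract a subsequence $\langle x_{n_k} : k \in \omega\rangle$ converging to some point $x \in A$. The plan is now to derive a contradiction with upper semicontinuity of $\Phi$ at $x$. Since $\Phi(x)$ is compact (being the value of an usco map), it is bounded, so there is $M \in \mathbb{R}$ with $\Phi(x) \subseteq (-\infty, M)$; fix the open set $V = (-\infty, M)$. By upper semicontinuity, $\Phi^\leftarrow(V) = \{z \in X : \Phi(z) \subseteq V\}$ is open and contains $x$, so it contains $x_{n_k}$ for all sufficiently large $k$. Hence for all large $k$ we have $\Phi(x_{n_k}) \subseteq (-\infty, M)$, so $y_{n_k} < M$. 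But $y_{n_k} \to +\infty$, a contradiction.

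Alternatively — and perhaps cleaner for the write-up — I would invoke Lemma \ref{lem:SelectionsAreSubcontinuous}: choose any selection $f$ of $\Phi$, modified so that $f(x_n) = y_n$ for the chosen witnesses (this is a legitimate selection since $y_n \in \Phi(x_n)$); then $f$ is subcontinuous, so along the subsequence $x_{n_k} \to x$ the sequence $\langle f(x_{n_k})\rangle = \langle y_{n_k}\rangle$ must have an accumulation point in $\mathbb{R}$, contradicting $y_{n_k} \to +\infty$. This phrasing packages the usco argument into the already-stated subcontinuity lemma. Either way, the core mechanism is the same.

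**Main obstacle.** There is no serious obstacle here; the result is routine. The only points requiring a little care are: (i) handling unboundedness below as well as above — a one-line symmetry remark suffices; (ii) making sure the extracted convergent subsequence of $\langle x_n\rangle$ still has its image escaping to infinity, which is automatic since $y_n > n$ forces $y_{n_k} \to \infty$ along any subsequence; and (iii) if using the subcontinuity route, checking that we may arrange a selection hitting the prescribed values $y_n$, which is immediate since selections are chosen pointwise and $y_n \in \Phi(x_n)$. I expect the proof to be four or five lines.
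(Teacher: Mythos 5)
Your proposal is correct, and your second (subcontinuity) variant is exactly the paper's argument: choose \(x_n \in A\) and \(y_n \in \Phi(x_n)\) with \(|y_n| \geq n\), take a selection \(f\) with \(f(x_n) = y_n\), extract a convergent subsequence by sequential compactness, and contradict Lemma \ref{lem:SelectionsAreSubcontinuous}. Your first, direct upper-semicontinuity argument (using that \(\Phi^\leftarrow((-\infty,M))\) is an open neighborhood of the limit point) is an equally valid, slightly more self-contained alternative, but there is no substantive difference in the mechanism.
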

The proof is nearly identical to the one of \cite[Lemma 2.4]{CCUsco}; one need only replace the application
of \cite[Theorem 1.29]{CCUsco} with an application of Lemma \ref{lem:SelectionsAreSubcontinuous}.

\begin{theorem} \label{thm:SecondTheorem}
    Let \(\mathcal A\) and \(\mathcal B\) be ideals of closed subsets of \(X\).
    If \(X\) is \(\mathcal A\)-normal and \(\mathcal B\) consists of sequentially compact sets, then
    \[
        \mathsf G_1(\mathscr{N}_X[\mathcal A], \neg \Lambda_X(\mathcal B))
        \leq_{\mathrm{II}} \mathsf G_1(\mathscr{T}_{\mathrm{MC}_{\mathcal A}(X)}, \mathrm{CD}_{\mathrm{MC}_{\mathcal B}(X)}).
    \]
    Consequently,
    \begin{align*}
        \mathsf G_1(\mathscr N_X[\mathcal A], \neg \mathcal O_X(\mathcal B))
        &\equiv \mathsf G_1(\mathscr N_{\mathrm{MC}_{\mathcal A}(X),\mathbf 0}, \neg \Omega_{\mathrm{MC}_{\mathcal B}(X),\mathbf 0})\\
        &\equiv \mathsf G_1(\mathscr T_{\mathrm{MC}_{\mathcal A}(X)}, \neg \Omega_{\mathrm{MC}_{\mathcal B}(X),\mathbf 0})\\
        &\equiv \mathsf G_1(\mathscr{T}_{\mathrm{MC}_{\mathcal A}(X)}, \mathrm{CD}_{\mathrm{MC}_{\mathcal B}(X)}).
    \end{align*}
\end{theorem}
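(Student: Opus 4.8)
The plan is to derive everything from the Translation Theorem \ref{TranslationTheorem}, Corollary \ref{cor:Rothberger}, and Lemma \ref{lem:IdealLargeCovers}; the only genuinely new ingredient is the displayed inequality, and the ``Consequently'' clause is assembly. To obtain \(\mathsf G_1(\mathscr N_X[\mathcal A],\neg\Lambda_X(\mathcal B))\leq_{\mathrm{II}}\mathsf G_1(\mathscr T_{\mathrm{MC}_{\mathcal A}(X)},\mathrm{CD}_{\mathrm{MC}_{\mathcal B}(X)})\) I would build translation maps. For each proper non-empty \(\mathcal A\)-open \(\mathcal V\subseteq\mathrm{MC}_{\mathcal A}(X)\) fix \(\Psi_{\mathcal V}\in\mathcal V\) and a basic neighborhood \([\Psi_{\mathcal V};A_{\mathcal V},\varepsilon_{\mathcal V}]\subseteq\mathcal V\) with \(A_{\mathcal V}\in\mathcal A\), and set \(\overleftarrow T_{\mathrm I,n}(\mathcal V)=\mathscr N_X(A_{\mathcal V})\). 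Given in addition \(U\in\mathscr N_X(A_{\mathcal V})\), use \(\mathcal A\)-normality to pick open \(V\) with \(A_{\mathcal V}\subseteq V\subseteq\mathrm{cl}(V)\subseteq U\) and let \(\overrightarrow T_{\mathrm{II},n}(U,\mathcal V)=\check g\), where \(g\) agrees with the quasicontinuous, subcontinuous selection \(\max\Psi_{\mathcal V}\) on \(V\) and equals the constant \(n\) on \(\mathrm{cl}(X\setminus\mathrm{cl}(V))\) (elsewhere define \(\overrightarrow T_{\mathrm{II},n}\) arbitrarily, as condition~(i) is then vacuous). By Remark \ref{rmk:MaximumSelection} and Corollary \ref{cor:CuscoAgreeOpen} (with \(y_0=n\)), \(\check g\in\mathrm{MC}(X)\), \(\langle\Psi_{\mathcal V},\check g\rangle\in\mathbf W(A_{\mathcal V},E)\) for every entourage \(E\) — hence \(\check g\in[\Psi_{\mathcal V};A_{\mathcal V},\varepsilon_{\mathcal V}]\subseteq\mathcal V\), which is condition~(i) — and \(g[X\setminus U]=\{n\}\).

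For condition~(ii), suppose \(\langle U_n\rangle\in\prod_n\overleftarrow T_{\mathrm I,n}(\mathcal V_n)\) with \(\{U_n:n\in\omega\}\notin\Lambda_X(\mathcal B)\); by Lemma \ref{lem:IdealLargeCovers} this says \(\{U_n\}\) is not a \(\mathcal B\)-cover, so fix \(B\in\mathcal B\) with \(B\not\subseteq U_n\) for all \(n\) and pick \(x_n\in B\setminus U_n\). Writing \(\Phi_n=\overrightarrow T_{\mathrm{II},n}(U_n,\mathcal V_n)\) and \(g_n\) for the associated function, we have \(g_n(x_n)=n\) (since \(x_n\in X\setminus U_n\)), hence \(n\in\Phi_n(x_n)\) because \(g_n\) selects \(\Phi_n=\check g_n\). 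For any \(\Phi\in\mathrm{MC}(X)\), Lemma \ref{lem:SequentiallyCompact} gives that \(\Phi[B]\) is bounded (as \(B\) is sequentially compact), so \([\Phi;B,1]\) contains \(\Phi_n\) for only the finitely many \(n\) within distance \(1\) of \(\Phi[B]\). Since \(\mathrm{MC}_{\mathcal B}(X)\) is Hausdorff, \(\{\Phi_n:n\in\omega\}\in\mathrm{CD}_{\mathrm{MC}_{\mathcal B}(X)}\), so Theorem \ref{TranslationTheorem} applies and yields the inequality.

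For the equivalences: since \(X\) is \(\mathcal A\)-normal, Corollary \ref{cor:Rothberger} already gives \(\mathsf G_1(\mathscr N_X[\mathcal A],\neg\mathcal O_X(\mathcal B))\equiv\mathsf G_1(\mathscr N_{\mathrm{MC}_{\mathcal A}(X),\mathbf 0},\neg\Omega_{\mathrm{MC}_{\mathcal B}(X),\mathbf 0})\equiv\mathsf G_1(\mathscr T_{\mathrm{MC}_{\mathcal A}(X)},\neg\Omega_{\mathrm{MC}_{\mathcal B}(X),\mathbf 0})\), and by Lemma \ref{lem:IdealLargeCovers} the first game equals the left side of the displayed inequality; so \(\mathsf G_1(\mathscr T_{\mathrm{MC}_{\mathcal A}(X)},\neg\Omega_{\mathrm{MC}_{\mathcal B}(X),\mathbf 0})\leq_{\mathrm{II}}\mathsf G_1(\mathscr T_{\mathrm{MC}_{\mathcal A}(X)},\mathrm{CD}_{\mathrm{MC}_{\mathcal B}(X)})\). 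All four games will coincide once the reverse inequality is shown, and for that I would again invoke Theorem \ref{TranslationTheorem}, now with \(\overleftarrow T_{\mathrm I,n}\) the identity on \(\mathscr T_{\mathrm{MC}_{\mathcal A}(X)}\) and \(\overrightarrow T_{\mathrm{II},n}(\Phi,\mathcal V)=\Phi\) when \(\Phi\neq\mathbf 0\); when \(\Phi=\mathbf 0\), pick a basic \([\mathbf 0;A,\delta]\subseteq\mathcal V\) with \(A\in\mathcal A\) and replace \(\mathbf 0\) by \(\check h\in\mathcal V\setminus\{\mathbf 0\}\), where (by Lemma \ref{lem:UsefulFunction} and Corollary \ref{cor:CuscoAgreeOpen}) \(h\) vanishes near \(A\) and takes the value \(1\) at a point outside \(A\) — such \(\check h\neq\mathbf 0\) exists because no basic neighborhood of \(\mathbf 0\) in \(\mathrm{MC}_{\mathcal A}(X)\) is a singleton. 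The statements about player One's strategies transfer because \(\mathbf 0\) is not isolated in \(\mathrm{MC}_{\mathcal A}(X)\) (delete \(\mathbf 0\) from One's moves).

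The main obstacle is condition~(ii) for this reverse map, i.e.\ guaranteeing the repaired collection does not re-accumulate at \(\mathbf 0\). If \(\{\Phi_n\}\) is closed discrete in \(\mathrm{MC}_{\mathcal B}(X)\) then \(\mathbf 0\) is isolated within it, so \(\{\Phi_n\}\setminus\{\mathbf 0\}\) misses some \([\mathbf 0;B_0,\varepsilon_0]\); the remedy is to place every replacement bump inside one fixed \(B^\ast\in\mathcal B\) that is contained in no member of \(\mathcal A\), so that each substituted \(\check h\) lies outside \([\mathbf 0;B^\ast,\tfrac12]\) and hence the substituted collection avoids \([\mathbf 0;B_0\cup B^\ast,\min(\varepsilon_0,\tfrac12)]\). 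This works whenever some member of \(\mathcal B\) is dominated by no member of \(\mathcal A\); the remaining degenerate case, where every \(B\in\mathcal B\) lies in some \(A\in\mathcal A\), is handled by a direct case analysis (if \(\mathcal A\) has a countable cofinal subfamily, One can force the selections to converge to \(\mathbf 0\) in both games, so both are won by One off a predetermined strategy; otherwise a winning strategy for Two can be arranged to send the selections to infinity at a point One never constrains, winning both games for Two). Everything apart from this last point is a mechanical application of Theorem \ref{TranslationTheorem} and Corollary \ref{cor:Rothberger}.
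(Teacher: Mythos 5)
Your translation argument for the displayed inequality is correct and is essentially the paper's own proof: the same maps \(\overleftarrow T_{\mathrm I,n}(W)=\mathscr N_X(A_W)\) and the same bump maps equal to \(n\) off a closed neighbourhood of \(A_W\), with Remark \ref{rmk:MaximumSelection}, Corollary \ref{cor:CuscoAgreeOpen} and Lemma \ref{lem:SequentiallyCompact} doing the identical work. The gap is in the ``Consequently'' step, where you still need \(\mathsf G_1(\mathscr T_{\mathrm{MC}_{\mathcal A}(X)},\mathrm{CD}_{\mathrm{MC}_{\mathcal B}(X)})\leq_{\mathrm{II}}\mathsf G_1(\mathscr T_{\mathrm{MC}_{\mathcal A}(X)},\neg\Omega_{\mathrm{MC}_{\mathcal B}(X),\mathbf 0})\). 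Your repair scheme requires some \(B^\ast\in\mathcal B\) contained in no member of \(\mathcal A\), so the case you call degenerate --- every \(B\in\mathcal B\) inside some \(A\in\mathcal A\) --- is in fact the principal one: it covers \(\mathcal A=\mathcal B=[X]^{<\omega}\), \(\mathcal A=\mathcal B=K(X)\), and \(\mathcal A=K(X)\), \(\mathcal B=[X]^{<\omega}\), i.e.\ every application in the paper. And the dichotomy you propose there fails. Take \(X=\mathbb Q\), \(\mathcal A=K(\mathbb Q)\), \(\mathcal B=[\mathbb Q]^{<\omega}\), a legitimate instance of the theorem: \(K(\mathbb Q)\) has no countable cofinal subfamily (\(\mathbb Q\) is not hemicompact), yet Two has no winning strategy in \(\mathsf G_1(\mathscr T_{\mathrm{MC}_k(\mathbb Q)},\neg\Omega_{\mathrm{MC}_p(\mathbb Q),\mathbf 0})\), since by Corollary \ref{cor:Rothberger} and duality such a strategy would give One a winning strategy in \(\mathsf G_1(\mathcal K_{\mathbb Q},\Omega_{\mathbb Q})\), whereas Two wins that game even with a Markov strategy (enumerate \([\mathbb Q]^{<\omega}\) as \(\langle F_n : n\in\omega\rangle\) and at stage \(n\) pick a member of One's \(k\)-cover containing \(F_n\)). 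So ``winning both games for Two'' is not available in your second case; note also that the sets \(A_n\) One uses may union to all of \(X\) (\(\mathbb Q\) is \(\sigma\)-compact), so there need be no point One never constrains, and even full-information wins for Two in both games would not deliver the Markov clause of \(\leq_{\mathrm{II}}\).

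What you are missing is that this last inequality needs no translation machinery and no case analysis, and this is how the paper closes the loop: a closed discrete set omitting \(\mathbf 0\) cannot have \(\mathbf 0\) in its closure. Since \(\mathrm{MC}_{\mathcal A}(X)\) is \(T_1\) and \(\mathbf 0\) is not isolated (Corollary \ref{cor:CuscoAgreeOpen} produces a nonzero element of every \([\mathbf 0;A,\varepsilon]\), using \(A\neq X\) and regularity), Two answers One's move \(W\) in the \(\neg\Omega\)-game by applying a winning (Markov or full) \(\mathrm{CD}\)-strategy to \(W\setminus\{\mathbf 0\}\), again a proper nonempty open set; the selections are then closed discrete and omit \(\mathbf 0\), hence lie in \(\neg\Omega_{\mathrm{MC}_{\mathcal B}(X),\mathbf 0}\). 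For the two clauses concerning One, argue contrapositively: from a winning strategy of One in the \(\neg\Omega\)-game, delete \(\mathbf 0\) from each move, so that Two's selections cluster at \(\mathbf 0\) while omitting it and therefore are not closed. With this one observation the cycle \(\mathsf G_1(\mathscr N_X[\mathcal A],\neg\mathcal O_X(\mathcal B))=\mathsf G_1(\mathscr N_X[\mathcal A],\neg\Lambda_X(\mathcal B))\leq_{\mathrm{II}}\mathsf G_1(\mathscr T_{\mathrm{MC}_{\mathcal A}(X)},\mathrm{CD}_{\mathrm{MC}_{\mathcal B}(X)})\leq_{\mathrm{II}}\mathsf G_1(\mathscr T_{\mathrm{MC}_{\mathcal A}(X)},\neg\Omega_{\mathrm{MC}_{\mathcal B}(X),\mathbf 0})\equiv\mathsf G_1(\mathscr N_X[\mathcal A],\neg\mathcal O_X(\mathcal B))\) closes (the last equivalence is Corollary \ref{cor:Rothberger}), yielding all the stated equivalences; your version of the argument, as it stands, does not establish them for any of the cases the paper actually uses.
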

\begin{proof}
    Let \(\pi_1 : \mathrm{MC}(X) \times \mathcal A \times \mathbb R \to \mathrm{MC}(X)\),
    \(\pi_2 : \mathrm{MC}(X) \times \mathcal A \times \mathbb R \to \mathcal A\),
    and \(\pi_3 : \mathrm{MC}(X) \times \mathcal A \times \mathbb R \to \mathbb R\) be the
    standard coordinate projection maps.
    Define a choice function \(\gamma : \mathscr T_{\mathrm{MC}_{\mathcal A}(X)} \to \mathrm{MC}(X) \times \mathcal A \times \mathbb R\)
    so that \[[\pi_1(\gamma(W)); \pi_2(\gamma(W)) , \pi_3(\gamma(W))] \subseteq W.\]
    Let \(\Psi_{W} = \pi_1(\gamma(W))\), \(A_W = \pi_2(\gamma(W))\), and \(\varepsilon_W = \pi_3(\gamma(W))\).
    Then we define \(\overleftarrow{T}_{\mathrm{I},n} : \mathscr T_{\mathrm{MC}_{\mathcal A}(X)} \to \mathscr N_X[\mathcal A]\)
    by \(\overleftarrow{T}_{\mathrm{I},n}(W) = \mathscr N_X(A_W)\).

    We now define
    \(\overrightarrow{T}_{\mathrm{II},n} : \mathscr T_X \times \mathscr T_{\mathrm{MC}_{\mathcal A}(X)} \to \mathrm{MC}(X)\)
    in the following way.
    For \(A \in \mathcal A\) and \(U \in \mathscr N_X(A)\), let \(V_{A,U}\) be open so that
    \[A \subseteq V_{A,U} \subseteq \mathrm{cl}(V_{A,U}) \subseteq U.\]
    For \(W \in \mathscr T_{\mathrm{MC}_{\mathcal A}(X)}\) and \(U \in \overleftarrow{T}_{\mathrm{I},n}(W)\),
    define \(f_{W,U,n} : X \to \mathbb R\) by the rule
    \[f_{W,U,n}(x) =
    \begin{cases}
        n, & x \in \mathrm{cl}(X \setminus \mathrm{cl}(V_{A_W,U}));\\
        \max \Psi_W(x), & \mathrm{otherwise}.\\
    \end{cases}
    \]
    Then we set
    \[\overrightarrow{T}_{\mathrm{II},n}(U,W) =
    \begin{cases}
        \check{f}_{W,U,n}, & U \in \overleftarrow{T}_{\mathrm{I},n}(W);\\
        \mathbf 0, & \text{otherwise}.
    \end{cases}
    \]
    By Remark \ref{rmk:MaximumSelection} and Corollary \ref{cor:CuscoAgreeOpen},
    \(\overrightarrow{T}_{\mathrm{II},n}(U,W) \in \mathrm{MC}(X)\) and,
    if \(U \in \overleftarrow{T}_{\mathrm{I},n}(W)\),
    \[\overrightarrow{T}_{\mathrm{II},n}(U,W) \in [\Psi_W;A_W,\varepsilon_W] \subseteq W.\]

    Suppose we have a sequence
    \[\langle U_n : n \in \omega \rangle \in \prod_{n\in\omega} \overleftarrow{T}_{\mathrm{I},n}(W_n)\]
    for a sequence \(\langle W_n : n \in \omega \rangle\) of \(\mathscr T_{\mathrm{MC}_{\mathcal A}(X)}\)
    so that \(\{ U_n : n \in \omega \} \not\in \Lambda_X(\mathcal B)\).
    Let \(\Phi_n = \overrightarrow{T}_{\mathrm{II},n}(U_n,W_n)\) for each \(n \in \omega\).
    We can find \(N \in \omega\) and \(B \in \mathcal B\) so that, for every \(n \geq N\),
    \(B \not\subseteq U_n\).
    Now, suppose \(\Phi \in \mathrm{MC}(X) \setminus \{\Phi_n : n \in \omega\}\) is arbitrary.
    By Lemma \ref{lem:SequentiallyCompact}, \(\Phi[B]\) is bounded, so let \(M > \sup |\Phi[B]|\)
    and \(n \geq \max\{N,M+1\}\).
    Now, for \(x \in B \setminus U_n\), note that \(n \in \Phi_n(x)\) and that, for \(y \in \Phi(x)\),
    \[y \leq \sup |\Phi[B]| < M \leq n-1 \implies y-n < -1 \implies |y-n| > 1.\]
    In particular, \(\Phi_n(x) \not\subseteq \mathbb B(\Phi(x),1)\) which establishes that
    \(\Phi_n \not\in [\Phi; B, 1]\).
    Hence, \(\{\Phi_n : n \in \omega \}\) is closed and discrete and Theorem \ref{TranslationTheorem} applies.

    For what remains, observe that
    \[\mathsf G_1(\mathscr{T}_{\mathrm{MC}_{\mathcal A}(X)}, \mathrm{CD}_{\mathrm{MC}_{\mathcal B}(X)})
    \leq_{\mathrm{II}} \mathsf G_1(\mathscr T_{\mathrm{MC}_{\mathcal A}(X)}, \neg \Omega_{\mathrm{MC}_{\mathcal B}(X),\mathbf 0})\]
    since, if Two can produce a closed discrete set, then Two can avoid clustering around \(\mathbf 0\).
    Hence, by Corollary \ref{cor:Rothberger} we obtain that
    \begin{align*}
        \mathsf G_1(\mathscr{N}_X[\mathcal A], \neg \mathcal O_X(\mathcal B))
        &= \mathsf G_1(\mathscr{N}_X[\mathcal A], \neg \Lambda_X(\mathcal B))\\
        &\leq_{\mathrm{II}} \mathsf G_1(\mathscr{T}_{\mathrm{MC}_{\mathcal A}(X)}, \mathrm{CD}_{\mathrm{MC}_{\mathcal B}(X)})\\
        &\leq_{\mathrm{II}} \mathsf G_1(\mathscr T_{\mathrm{MC}_{\mathcal A}(X)}, \neg \Omega_{\mathrm{MC}_{\mathcal B}(X),\mathbf 0})\\
        &\equiv \mathsf G_1(\mathscr{N}_X[\mathcal A], \neg \mathcal O_X(\mathcal B)).
    \end{align*}
    This completes the proof.
\end{proof}

We now offer some relationships related to Gruenhage's \(W\)-games.
\begin{proposition} \label{prop:ConvergenceGames}
    Let \(\mathcal A\) and \(\mathcal B\) be ideals of closed subsets of \(X\).
    Then
    \begin{enumerate}[label=(\roman*)]
        \item \label{prop:ConvergenceGamesA}
        \(\mathsf G_1(\mathscr N_{\mathrm{MC}_{\mathcal A}(X),\mathbf 0} , \neg \Omega_{\mathrm{MC}_{\mathcal B}(X),\mathbf 0})
        \leq_{\mathrm{II}} \mathsf G_1(\mathscr N_{\mathrm{MC}_{\mathcal A}(X),\mathbf 0} , \neg \Gamma_{\mathrm{MC}_{\mathcal B}(X),\mathbf 0})\) and
        \item \label{prop:ConvergenceGamesB}
        \(\mathsf G_1(\mathscr N_{\mathrm{MC}_{\mathcal A}(X),\mathbf 0} , \neg \Gamma_{\mathrm{MC}_{\mathcal B}(X),\mathbf 0})
        \leq_{\mathrm{II}} \mathsf G_1(\mathscr N_X[\mathcal A], \neg \Gamma_X(\mathcal B))\).
    \end{enumerate}
\end{proposition}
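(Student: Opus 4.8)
The plan is to prove both inclusions via Theorem~\ref{TranslationTheorem}, following the same pattern used in Theorems~\ref{thm:FirstEquivalence} and~\ref{thm:SecondTheorem}, but now with One playing from a neighborhood filter rather than from a collection of covers.

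For part~\ref{prop:ConvergenceGamesA}, the games share the same move set for One, namely $\mathscr N_{\mathrm{MC}_{\mathcal A}(X),\mathbf 0}$, so I would take $\overleftarrow{T}_{\mathrm{I},n}$ to be the identity and $\overrightarrow{T}_{\mathrm{II},n}(\Phi,W) = \Phi$ (for $\Phi \in W$, and an arbitrary fixed element otherwise). Condition~(i) of Theorem~\ref{TranslationTheorem} is then immediate. For condition~(ii), I must show that if $\langle \Phi_n : n \in \omega\rangle$ is a sequence with $\Phi_n \in W_n$ and $\{\Phi_n : n \in \omega\} \notin \Gamma_{\mathrm{MC}_{\mathcal B}(X),\mathbf 0}$, then $\{\Phi_n : n \in \omega\} \notin \Omega_{\mathrm{MC}_{\mathcal B}(X),\mathbf 0}$; equivalently, that the selected sequence is not in $\neg\Gamma$ implies it is not in $\neg\Omega$. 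Actually the direction to check is the contrapositive-flavored statement built into the translation: I need $\{\Phi_n\} \in \Gamma_{\mathrm{MC}_{\mathcal B}(X),\mathbf 0} \Rightarrow \{\Phi_n\} \in \Omega_{\mathrm{MC}_{\mathcal B}(X),\mathbf 0}$, which holds trivially since a sequence converging to $\mathbf 0$ in the sense of being eventually in every neighborhood certainly clusters at $\mathbf 0$ (and one must mind the degenerate case where the set is finite, handled exactly as in the cited proofs). So the bookkeeping with the classes $\mathcal C = \neg\Gamma$, $\mathcal D = \neg\Omega$ works out because $\Gamma_{Y,y} \subseteq \Omega_{Y,y}$ for any space $Y$.

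For part~\ref{prop:ConvergenceGamesB}, this is the genuinely substantive half and mirrors the construction in Theorem~\ref{thm:SecondTheorem}. Here One's moves translate from $\mathscr N_X[\mathcal A]$, i.e.\ sets of the form $\mathscr N_X(A)$ for $A \in \mathcal A$, to neighborhoods of $\mathbf 0$ in $\mathrm{MC}_{\mathcal A}(X)$. I would define $\overleftarrow{T}_{\mathrm{I},n}(W) = \mathscr N_X(A_W)$ where, via a choice function on basic neighborhoods, $[\mathbf 0; A_W, \varepsilon_W] \subseteq W$ (this is legitimate since every neighborhood of $\mathbf 0$ contains such a basic set, and one can arrange $\varepsilon_W < 2^{-n}$ by shrinking). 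Then $\overrightarrow{T}_{\mathrm{II},n}$ takes an open $U \supseteq A_W$ and $W$, interposes $A_W \subseteq V \subseteq \mathrm{cl}(V) \subseteq U$ by $\mathcal A$-normality, and returns $\check f$ where $f$ equals $\max\Phi$ (for $\Phi$ the chosen witness $\Psi_W$) off $\mathrm{cl}(X\setminus\mathrm{cl}(V))$ and is $0$ on it — just as in Corollary~\ref{cor:CuscoAgreeOpen} with $y_0 = 0$, so that $\check f \in [\mathbf 0; A_W, \varepsilon_W] \subseteq W$ (actually one wants $\check f \in [\mathbf 0; X\setminus U, \text{small}]$ as well, which is automatic since $\check f$ is $0$ off $U$). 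Wait — the target class is $\neg\Gamma_X(\mathcal B)$, so I need: if the selected sequence $\langle U_n\rangle$ fails to be in $\neg\Gamma_X(\mathcal B)$, i.e.\ $\langle U_n\rangle \in \Gamma_X(\mathcal B)$, then the produced sequence $\langle\check f_n\rangle$ is in $\neg\Gamma_{\mathrm{MC}_{\mathcal B}(X),\mathbf 0}$ — no, it is the other direction: I need $\{\check f_n\} \in \Gamma_{\mathrm{MC}_{\mathcal B}(X),\mathbf 0} \Rightarrow \{U_n\} \in \Gamma_X(\mathcal B)$. So suppose the maps converge to $\mathbf 0$ cofinitely in $\mathrm{MC}_{\mathcal B}(X)$; fix $B \in \mathcal B$. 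For cofinitely many $n$, $\check f_n \in [\mathbf 0; B, \text{(some fixed small }\delta)]$, hence $\max\Phi_n(x) < \delta$ for $x \in B$, but $\check f_n$ takes value $0$ (not $n$ or anything large) off $U_n$... here I need the value on $\mathrm{cl}(X\setminus\mathrm{cl}(V_n))$ to be something that forces $B \subseteq U_n$. Taking that value to be a constant $c \neq 0$ with $|c|$ fixed, membership $\check f_n \in [\mathbf 0; B, |c|/2]$ forces $B \cap (X\setminus U_n) = \emptyset$, i.e.\ $B \subseteq U_n$, for all such $n$ — giving cofinitely many, which is exactly $\Gamma_X(\mathcal B)$. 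The degenerate-finite case and the requirement that the $U_n$ form a $\mathcal B$-cover at all (handled since $\mathcal B$ is an ideal, as in Theorem~\ref{thm:SecondTheorem}) round it out.

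The main obstacle I anticipate is getting the quantifiers in the $\Gamma$ classes to line up correctly through the translation machinery: $\Gamma_X(\mathcal B)$ requires the subcover-at-$B$ to be \emph{cofinite} rather than merely infinite, so the argument in part~\ref{prop:ConvergenceGamesB} must produce, for each $B\in\mathcal B$, membership $B \subseteq U_n$ for \emph{all but finitely many} $n$ — which is why it is essential that convergence of $\langle\check f_n\rangle$ to $\mathbf 0$ is cofinite membership in every basic neighborhood, and that a single fixed threshold (depending only on the fixed constant value used in the construction, not on $n$) suffices to detect $B \subseteq U_n$. Unlike the strong-fan-tightness setting, here one does \emph{not} need to let the off-$\mathrm{cl}(V)$ value grow with $n$; a single nonzero constant works and in fact is needed to keep $\check f_n$ within a fixed neighborhood of $\mathbf 0$ on compact (or $\mathcal B$-)sets contained in $U_n$. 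Everything else — definedness of $\overleftarrow{T}_{\mathrm{I},n}$, the fact that $\check f_n \in W_n$, that $\check f_n \in \mathrm{MC}(X)$ — is furnished verbatim by Remark~\ref{rmk:MaximumSelection}, Corollary~\ref{cor:CuscoAgreeOpen}, and $\mathcal A$-normality, exactly as in the proof of Theorem~\ref{thm:SecondTheorem}.
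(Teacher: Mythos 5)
Your argument for part \ref{prop:ConvergenceGamesA} is fine and is essentially the paper's one-line observation: a selection that fails to cluster at \(\mathbf 0\) certainly fails to converge to \(\mathbf 0\), so Two's winning plays transfer verbatim.

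For part \ref{prop:ConvergenceGamesB}, however, your translation maps are oriented backwards, so you end up proving the converse inequality rather than the one claimed. In Theorem \ref{TranslationTheorem}, to conclude \(\mathsf G_1(\mathcal A,\mathcal C) \leq_{\mathrm{II}} \mathsf G_1(\mathcal B,\mathcal D)\), the map \(\overleftarrow{T}_{\mathrm{I},n}\) must convert One's moves in the \emph{target} game into One's moves in the \emph{source} game, and \(\overrightarrow{T}_{\mathrm{II},n}\) must convert Two's selections in the source game into legal responses in the target game. Here the source is \(\mathsf G_1(\mathscr N_{\mathrm{MC}_{\mathcal A}(X),\mathbf 0}, \neg\Gamma_{\mathrm{MC}_{\mathcal B}(X),\mathbf 0})\) and the target is \(\mathsf G_1(\mathscr N_X[\mathcal A], \neg\Gamma_X(\mathcal B))\), so you need \(\overleftarrow{T}_{\mathrm{I},n} : \mathscr N_X[\mathcal A] \to \mathscr N_{\mathrm{MC}_{\mathcal A}(X),\mathbf 0}\) and \(\overrightarrow{T}_{\mathrm{II},n} : \mathrm{MC}(X) \times \mathscr N_X[\mathcal A] \to \mathscr T_X\). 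You instead defined \(\overleftarrow{T}_{\mathrm{I},n}\) on neighborhoods \(W\) of \(\mathbf 0\) in \(\mathrm{MC}_{\mathcal A}(X)\) with values \(\mathscr N_X(A_W)\), and \(\overrightarrow{T}_{\mathrm{II},n}\) taking an open \(U \supseteq A_W\) to a cusco map \(\check f\); your verification target, \(\{\check f_n\} \in \Gamma_{\mathrm{MC}_{\mathcal B}(X),\mathbf 0} \Rightarrow \{U_n\} \in \Gamma_X(\mathcal B)\), is exactly condition (ii) for the statement \(\mathsf G_1(\mathscr N_X[\mathcal A], \neg\Gamma_X(\mathcal B)) \leq_{\mathrm{II}} \mathsf G_1(\mathscr N_{\mathrm{MC}_{\mathcal A}(X),\mathbf 0}, \neg\Gamma_{\mathrm{MC}_{\mathcal B}(X),\mathbf 0})\), i.e.\ the reverse of \ref{prop:ConvergenceGamesB}. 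A telltale symptom is that your construction invokes \(\mathcal A\)-normality (and the machinery of Remark \ref{rmk:MaximumSelection} and Corollary \ref{cor:CuscoAgreeOpen}), whereas the proposition carries no separation hypothesis at all.

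The intended direction needs no function-building: translate One's move \(\mathscr N_X(A)\) in the target game to \([\mathbf 0; A, 2^{-n}]\) in the \(\mathrm{MC}_{\mathcal A}(X)\) game, and translate Two's selection \(\Phi \in [\mathbf 0; A, 2^{-n}]\) to the open set \(\Phi^{\leftarrow}\bigl[(-2^{-n},2^{-n})\bigr]\), which contains \(A\). If \(\langle \Phi_n : n \in \omega\rangle\) fails to converge to \(\mathbf 0\) in \(\mathrm{MC}_{\mathcal B}(X)\), pick \(B \in \mathcal B\) and \(\varepsilon > 0\) witnessing this; whenever \(2^{-n} < \varepsilon\) and \(\Phi_n \notin [\mathbf 0; B, \varepsilon]\), some \(x \in B\) has \(\Phi_n(x) \not\subseteq (-2^{-n},2^{-n})\), so \(B \not\subseteq \Phi_n^{\leftarrow}\bigl[(-2^{-n},2^{-n})\bigr]\). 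Since this happens for infinitely many \(n\), the selected open sets do not form a \(\Gamma_X(\mathcal B)\)-cover, and Theorem \ref{TranslationTheorem} applies. Note also that the quantifier issue you flagged cuts the other way in this direction: one only needs to defeat cofiniteness for a single \(B\), not establish it.
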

\begin{proof}
    \ref{prop:ConvergenceGamesA} is evident since, if Two can avoid clustering at \(\mathbf 0\), they can surely
    avoid converging to \(\mathbf 0\).

    \ref{prop:ConvergenceGamesB}
    Fix \(U_0 \in \mathscr T_X\) and
    define \(\overleftarrow{T}_{\mathrm{I},n} : \mathscr N_X[\mathcal A] \to \mathscr N_{\mathrm{MC}_{\mathcal A}(X),\mathbf 0}\)
    by \(\overleftarrow{T}_{\mathrm{I},n}(\mathscr N_X(A)) = \left[\mathbf 0; A , 2^{-n}\right]\).
    Then define \(\overrightarrow{T}_{\mathrm{II},n} : \mathrm{MC}(X) \times \mathscr N_X[\mathcal A] \to \mathscr T_X\) by
    \(\overrightarrow{T}_{\mathrm{II},n}(\Phi, \mathscr N_X(A)) = \Phi^{\leftarrow}\left[\left(-2^{-n},2^{-n}\right)\right]\).
    Note that, if \(\Phi \in \left[\mathbf 0; A , 2^{-n}\right] = \overleftarrow{T}_{\mathrm{I},n}(\mathscr N_X(A)),\)
    then \(A \subseteq \Phi^\leftarrow\left[\left(-2^{-n},2^{-n}\right)\right]\), which establishes that
    \(\overrightarrow{T}_{\mathrm{II},n}(\Phi, \mathscr N_X(A)) \in \mathscr N_X(A)\).

    Suppose we have
    \[\left\langle \Phi_n : n \in \omega \right\rangle \in \prod_{n\in\omega} \overleftarrow{T}_{\mathrm{I},n}(\mathscr N_X(A_n))\]
    for a sequence \(\langle A_n : n \in \omega \rangle\) of \(\mathcal A\)
    so that \(\langle \Phi_n : n \in \omega \rangle \not\in \Gamma_{\mathrm{MC}_{\mathcal B}(X),\mathbf 0}\).
    Then we can find \(B \in \mathcal B\), \(\varepsilon > 0\), and \(N \in \omega\) so that \(2^{-N} < \varepsilon\) and,
    for all \(n \geq N\), \(\Phi_n \not\in [\mathbf 0; B, \varepsilon]\).

    To finish this application of Theorem \ref{TranslationTheorem}, we need to show that \[B \not\subseteq
    \overrightarrow{T}_{\mathrm{II},n}(\Phi_n, \mathscr N_X(A_n))\] for all \(n \geq N\).
    So let \(n \geq N\) and note that, since \(\Phi_n \not\in [\mathbf 0; B, \varepsilon]\),
    there is some \(x \in B\) and \(y \in \Phi_n(x)\) so that \(|y| \geq \varepsilon > 2^{-N} \geq 2^{-n}\).
    That is, \(\Phi_n(x) \not\subseteq (-2^{-n} , 2^{-n})\) and so
    \(x \not\in \overrightarrow{T}_{\mathrm{II},n}(\Phi_n, \mathscr N_X(A_n))\).
    This finishes the proof.
\end{proof}

Though particular applications of Corollary \ref{cor:Rothberger} and Theorem \ref{thm:SecondTheorem}
abound, we record a few that capture the general spirit using ideals of usual interest
after recalling some other facts and some names for particular selection principles.
\begin{definition}
    We identify some particular selection principles by name.
    \begin{itemize}
        \item
        \(\mathsf S_1(\Omega_{X,x}, \Omega_{X,x})\) is known as the \emph{strong countable fan-tightness property
        for \(X\) at \(x\)}.
        \item
        \(\mathsf S_1(\mathscr D_{X}, \Omega_{X,x})\) is known as the \emph{strong countable dense fan-tightness property
        for \(X\) at \(x\)}.
        \item
        \(\mathsf S_1(\mathscr T_X, \mathrm{CD}_X)\) is known as the \emph{discretely selective property for \(X\)}.
        \item
        We refer to \(\mathsf S_1(\Omega_X,\Omega_X)\) as the \emph{\(\omega\)-Rothberger property}
        and \(\mathsf S_1(\mathcal K_X,\mathcal K_X)\) as the \emph{\(k\)-Rothberger property}.
    \end{itemize}
\end{definition}

\begin{definition}
    For a partially ordered set \((\mathbb P, \leq)\) and collections \(\mathcal A , \mathcal B \subseteq \mathbb P\)
    so that, for every \(B \in \mathcal B\), there exists some \(A \in \mathcal A\) with \(B \subseteq A\),
    we define the \emph{cofinality of \(\mathcal A\) relative to \(\mathcal B\)} by
    \[\mathrm{cof}(\mathcal A; \mathcal B, \leq)
    = \min \{ \kappa \in \mathrm{CARD} : (\exists \mathscr F \in [\mathcal A]^\kappa)(\forall B \in \mathcal B)(\exists A \in \mathscr F)
    \ B \subseteq A\}\]
    where \(\mathrm{CARD}\) is the class of cardinals
    and \([\mathcal A]^{\kappa}\) is the set of \(\kappa\)-sized subsets of \(\mathcal A\).
\end{definition}
\begin{lemma} \label{lem:TelgarskyCofinal}
    Let \(\mathcal A, \mathcal B \subseteq \wp^+(X)\) for a space \(X\).

    As long as \(X\) is \(T_1\),
    \[\mathrm{I} \uparrow_{\mathrm{pre}} \mathsf G_1(\mathscr N_X[\mathcal A] , \neg \mathcal O_X(\mathcal B))
    \iff \mathrm{cof}(\mathcal A;\mathcal B, \subseteq) \leq \omega.\]
    (See {\cite{GerlitsNagy,TkachukCpBook}, and \cite[Lemma 23]{CHContinuousFunctions}}.)

    If \(\mathcal A\) consists of \(G_\delta\) sets,
    \begin{align*}
        \mathrm{I} \uparrow \mathsf G_1(\mathscr N_X[\mathcal A] , \neg \mathcal O_X(\mathcal B))
        &\iff \mathrm{I} \uparrow_{\mathrm{pre}} \mathsf G_1(\mathscr N_X[\mathcal A] , \neg \mathcal O_X(\mathcal B))\\
        &\iff \mathrm{cof}(\mathcal A;\mathcal B, \subseteq) \leq \omega.
    \end{align*}
    (See {\cite{Galvin1978,Telgarsky1975} and \cite[Lemma 24]{CHContinuousFunctions}}.)
\end{lemma}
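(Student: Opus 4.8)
The plan is to establish, for the game $\mathcal G:=\mathsf G_1(\mathscr N_X[\mathcal A],\neg\mathcal O_X(\mathcal B))$, the chain
\[
\mathrm{cof}(\mathcal A;\mathcal B,\subseteq)\leq\omega
\ \Longrightarrow\ \mathrm{I}\underset{\mathrm{pre}}{\uparrow}\mathcal G
\ \Longrightarrow\ \mathrm{I}\uparrow\mathcal G
\ \Longrightarrow\ \mathrm{cof}(\mathcal A;\mathcal B,\subseteq)\leq\omega ,
\]
together with a separate direct proof of $\mathrm{I}\underset{\mathrm{pre}}{\uparrow}\mathcal G\Rightarrow\mathrm{cof}(\mathcal A;\mathcal B,\subseteq)\leq\omega$ needing only $T_1$. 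The second arrow of the chain is trivial, the third is the only place the $G_\delta$ hypothesis enters, and the first needs no separation axiom; combining the first arrow with the $T_1$-converse gives the first displayed equivalence, and the whole chain gives the second. Throughout I use tacitly that a subfamily of $\mathscr T_X$ which is large for $\mathcal B$ is automatically a $\mathcal B$-cover, which holds whenever $\mathcal B$ contains the singletons --- in particular in every intended application, where $\mathcal B$ is an ideal of closed sets.

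\emph{The two easy directions.} If $\{A_n:n\in\omega\}\subseteq\mathcal A$ witnesses $\mathrm{cof}(\mathcal A;\mathcal B,\subseteq)\leq\omega$, let One use the predetermined strategy $n\mapsto\mathscr N_X(A_n)$; whatever proper open $U_n\supseteq A_n$ Two answers, each $B\in\mathcal B$ lies in some $A_n\subseteq U_n$, so $\{U_n:n\in\omega\}\in\mathcal O_X(\mathcal B)$ and One wins. Conversely, a winning predetermined strategy for One has the form $n\mapsto\mathscr N_X(A_n)$ for some $A_n\in\mathcal A$; were $\{A_n:n\in\omega\}$ not cofinal for $\mathcal B$, fix $B\in\mathcal B$ with $B\not\subseteq A_n$ for all $n$ and $x_n\in B\setminus A_n$, and observe that, $X$ being $T_1$, the set $X\setminus\{x_n\}$ is a proper open superset of $A_n$ and hence a legal answer for Two; playing these, no $U_n$ contains $B$, so $\{U_n:n\in\omega\}\notin\mathcal O_X(\mathcal B)$ and Two wins, contradicting the choice of strategy. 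The point is that against a predetermined strategy Two's answers never influence One's later moves, so there is no branching to control and $T_1$ alone suffices.

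\emph{From full information to countable cofinality.} Fix a winning strategy $\sigma$ for One. For each $A\in\mathcal A$, using $T_1$ (so that the proper set $A$ has a proper open superset $W_A$) and the $G_\delta$ hypothesis, fix a countable family $\{G^A_m:m\in\omega\}$ of proper open supersets of $A$ with $\bigcap_mG^A_m=A$ (obtained by intersecting $W_A$ with a $G_\delta$-representation of $A$). Build a tree of $\sigma$-consistent partial plays indexed by $\omega^{<\omega}$: let $\mathscr N_X(A_{\langle\rangle})=\sigma(\langle\rangle)$, and, given the partial play $p_s$ whose last move is One's move $\mathscr N_X(A_s)$, for each $m\in\omega$ let Two answer $G^{A_s}_m$, let One reply via $\sigma$, and record the result as $p_{s^\frown m}$ with last move $\mathscr N_X(A_{s^\frown m})$. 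Put $\mathcal A^{\ast}=\{A_s:s\in\omega^{<\omega}\}\subseteq\mathcal A$, a countable family; I claim it is cofinal for $\mathcal B$, which gives $\mathrm{cof}(\mathcal A;\mathcal B,\subseteq)\leq\omega$ and hence $\mathrm{I}\underset{\mathrm{pre}}{\uparrow}\mathcal G$ by the previous paragraph. If $\mathcal A^{\ast}$ were not cofinal, fix $B\in\mathcal B$ with $B\not\subseteq A_s$ for all $s$, and recursively choose $m_k$ with $B\not\subseteq G^{A_{s_k}}_{m_k}$ --- possible because $B\not\subseteq A_{s_k}=\bigcap_mG^{A_{s_k}}_m$ --- where $s_0=\langle\rangle$ and $s_{k+1}=s_k^\frown m_k$. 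By construction the play in which Two answers $U_k:=G^{A_{s_k}}_{m_k}$ at stage $k$ is consistent with $\sigma$, yet no $U_k$ contains $B$, so Two wins this $\sigma$-run, contradicting that $\sigma$ is winning.

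\emph{The obstacle.} The one delicate point is verifying that the run in the last step is genuinely $\sigma$-consistent: one checks, by induction on $k$, that the Two-moves $U_0,\dots,U_{k-1}$ read off the branch $s_k=\langle m_0,\dots,m_{k-1}\rangle$ are exactly the Two-moves recorded in $p_{s_k}$, so that One's $k$-th move $\mathscr N_X(A_{s_k})$ really is $\sigma$ applied to $U_0,\dots,U_{k-1}$. Note also that the $G_\delta$ hypothesis is precisely what makes the tree $\omega$-branching, hence $\mathcal A^{\ast}$ countable; without it Two's ``minimal'' legal answers $X\setminus\{x\}$ range over an uncountable set, the branching is uncontrolled, and indeed the full-information/predetermined collapse can fail. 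All of this is classical: the $T_1$ statement is the Gerlits--Nagy/Telg\'arsky analysis of the point-open game and the $G_\delta$ statement is due to Galvin and Telg\'arsky; both are recorded in \cite[Lemmas 23 and 24]{CHContinuousFunctions}, which may simply be cited in place of the above.
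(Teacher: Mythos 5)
Your argument is correct in substance, but it is worth noting that the paper does not prove this lemma at all: it is quoted with citations (Gerlits--Nagy, Telg\'arsky, Galvin, Tkachuk, and Lemmas 23--24 of \cite{CHContinuousFunctions}), so your contribution is a self-contained proof of what the paper treats as a black box. What you supply is exactly the classical argument behind those references: the easy transfer from a countable cofinal family to a predetermined strategy, the $T_1$ trick of answering $\mathscr N_X(A_n)$ with $X\setminus\{x_n\}$ for the converse, and Galvin's $\omega$-branching tree of $\sigma$-consistent plays (made countable precisely by the $G_\delta$ hypothesis) for the full-information collapse, including the diagonal branch refuting cofinality failure. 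Two small points deserve attention. First, as you yourself flag, the implication from $\mathrm{cof}(\mathcal A;\mathcal B,\subseteq)\leq\omega$ to a winning predetermined strategy needs the selected opens to actually form a $\mathcal B$-cover, which requires something like $\bigcup\mathcal B=X$ (true when $\mathcal B$ is an ideal of closed sets containing singletons, as in every application in the paper, but not literally guaranteed by $\mathcal B\subseteq\wp^+(X)$); your caveat is the honest reading of the statement. Second, your proof of the $G_\delta$ half invokes $T_1$ to manufacture the proper open supersets $W_A$, whereas the statement of that half carries no $T_1$ hypothesis; this is harmless in the paper (all spaces are Hausdorff by standing assumption) and easily removed: for $A\neq X$ a $G_\delta$ set, write $A=\bigcap_m H_m$ with $H_m$ open and simply discard the terms equal to $X$ --- at least one proper term remains and the intersection is unchanged, so the countable family of legal answers for Two exists without any separation axiom. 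With these remarks, your proof is a complete and correct substitute for the citations.
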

Observe that Lemma \ref{lem:TelgarskyCofinal} informs us that, for a \(T_1\) space \(X\),
\begin{itemize}
    \item
    \(\mathrm{I} \uparrow_{\mathrm{pre}} \mathsf G_1(\mathbb P_X, \neg\mathcal O_X)\) if and only if \(X\) is countable,
    \item
    \(\mathrm{I} \uparrow_{\mathrm{pre}} \mathsf G_1(\mathscr N_X[K(X)], \neg\mathcal O_X)\) if and only if
    \(X\) is \(\sigma\)-compact, and
    \item
    \(\mathrm{I} \uparrow_{\mathrm{pre}} \mathsf G_1(\mathscr N_X[K(X)], \neg\mathcal K_X)\) if and only if \(X\) is hemicompact.
\end{itemize}
\begin{corollary}
    For an ideal \(\mathcal A\) of closed subsets of a \(T_1\) space \(X\), \(\mathrm{cof}(\mathcal A; \mathcal A, \subseteq) \leq \omega\)
    if and only if \(\mathrm{MC}_{\mathcal A}(X)\) is metrizable.
\end{corollary}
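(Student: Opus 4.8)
The plan is to recognize that metrizability of $\mathrm{MC}_{\mathcal A}(X)$ is governed by Theorem~\ref{thm:UniformMetrizability}: since $\mathrm{MC}_{\mathcal A}(X)$ carries the topology induced by the uniformity with base $\{\mathbf W(A,\varepsilon) : A \in \mathcal A,\ \varepsilon > 0\}$, and since this uniformity is Hausdorff (using that $X = \bigcup \mathcal A$, so two distinct minimal cusco maps, which must differ at some point $x$ by Theorem~\ref{thm:HolaHolyUscoChar}/\ref{thm:HolaHolyCuscoChar}, are separated by $\mathbf W(\{x\},\varepsilon)$ for small $\varepsilon$), it suffices to show that this uniformity has a countable base if and only if $\mathrm{cof}(\mathcal A;\mathcal A,\subseteq) \leq \omega$. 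Thus the statement reduces to a purely combinatorial claim about the base.

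First I would prove the easy direction: if $\mathrm{cof}(\mathcal A;\mathcal A,\subseteq) \leq \omega$, fix a countable cofinal family $\{A_n : n \in \omega\} \subseteq \mathcal A$. Then $\{\mathbf W(A_n, 2^{-m}) : n,m \in \omega\}$ is a countable base for the uniformity: given any $A \in \mathcal A$ and $\varepsilon > 0$, choose $n$ with $A \subseteq A_n$ and $m$ with $2^{-m} < \varepsilon$, and observe that $\mathbf W(A_n, 2^{-m}) \subseteq \mathbf W(A, \varepsilon)$ directly from Definition~\ref{def:UniformStructure} (a smaller test set and a smaller entourage both shrink $\mathbf W$). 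By Theorem~\ref{thm:UniformMetrizability}, $\mathrm{MC}_{\mathcal A}(X)$ is metrizable.

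For the converse, suppose $\mathrm{MC}_{\mathcal A}(X)$ is metrizable, so by Theorem~\ref{thm:UniformMetrizability} the uniformity has a countable base, which I may refine to a countable base of the form $\{\mathbf W(A_n, \varepsilon_n) : n \in \omega\}$ with $A_n \in \mathcal A$ (each basic entourage $\mathbf W(A,\varepsilon)$ contains such a set and conversely). I claim $\{A_n : n \in \omega\}$ is cofinal in $\mathcal A$. Suppose not: there is $A \in \mathcal A$ with $A \not\subseteq A_n$ for every $n$, so pick $x_n \in A \setminus A_n$. The obstacle, and the one place requiring the usco/cusco machinery, is to show $\mathbf W(A_n, \varepsilon_n) \not\subseteq \mathbf W(A, 1)$ for each $n$ — i.e.\ to exhibit $\Phi, \Psi \in \mathrm{MC}(X)$ with $\langle \Phi, \Psi \rangle \in \mathbf W(A_n,\varepsilon_n)$ but $\langle \Phi,\Psi\rangle \notin \mathbf W(A,1)$; then $\mathbf W(A,1)$ would contain no basic entourage, contradicting that the $\mathbf W(A_n,\varepsilon_n)$ form a base. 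To build such a pair, take $\Phi = \mathbf 0$, and use $\mathcal A$-normality? — no, $\mathcal A$-normality is not assumed here, so instead I would use Lemma~\ref{lem:UsefulFunction} more carefully: since $X$ is $T_1$ and $A_n$ is closed with $x_n \notin A_n$, choose an open $U \ni x_n$ with... actually the clean route is to note that the closed set $A_n$ and the point $x_n \notin A_n$ need not be separated by open sets without regularity, so I would instead work with the map $g$ that is $1$ on $X \setminus U$ and $\max\Phi(x)$ elsewhere only when such $U$ exists; lacking that, I fall back on: since $\mathbf W$ only tests on $A_n$, and $\mathbf 0$ together with $\check{\mathbf 1}_{\{x_n\}}$-type constructions via Lemma~\ref{lem:UsefulFunction} (with the constant function $1$ and a quasicontinuous subcontinuous selection) produces a minimal cusco $\Psi$ agreeing with $\mathbf 0$ on a dense-in-$A_n$ set but with $\max \Psi(x_n) \geq 1$; by Corollary~\ref{cor:AlmostLikeContinuous} agreement on a dense subset of $A_n$ suffices to conclude $\langle \mathbf 0, \Psi\rangle \in \mathbf W(A_n,\varepsilon_n)$, while $\Psi(x_n) \not\subseteq (-1,1)$ forces $\langle \mathbf 0,\Psi\rangle \notin \mathbf W(A,1)$. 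This yields the contradiction and hence cofinality, completing the proof.
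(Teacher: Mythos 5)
Your forward direction coincides with the paper's: a countable cofinal family \(\{A_n : n \in \omega\}\) gives the countable uniform base \(\{\mathbf W(A_n,2^{-m}) : n,m\in\omega\}\) and Theorem \ref{thm:UniformMetrizability} applies. The converse, however, has two genuine gaps. First, Theorem \ref{thm:UniformMetrizability} characterizes metrizability of the \emph{uniform space}, i.e.\ the existence of a metric inducing the given uniformity; from metrizability of the \emph{topology} of \(\mathrm{MC}_{\mathcal A}(X)\) you cannot conclude that the specific uniformity generated by \(\{\mathbf W(A,\varepsilon) : A\in\mathcal A,\ \varepsilon>0\}\) has a countable base (a metrizable topology can be induced by a uniformity with no countable base, e.g.\ the fine uniformity of \(\mathbb R\)), and asserting this here essentially assumes the conclusion. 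The repairable version of your idea uses only first countability at \(\mathbf 0\): refine a countable neighborhood base at \(\mathbf 0\) to sets \([\mathbf 0;A_n,\varepsilon_n]\) and show \(\{A_n\}\) is cofinal. That is in effect what the paper does, but it packages the step through the game machinery: first countability gives \(\mathrm{I} \underset{\mathrm{pre}}{\uparrow} \mathsf G_1(\mathscr N_{\mathrm{MC}_{\mathcal A}(X),\mathbf 0}, \neg \Omega_{\mathrm{MC}_{\mathcal A}(X),\mathbf 0})\), Theorem \ref{thm:SecondTheorem} transfers this to \(\mathsf G_1(\mathscr N_X[\mathcal A], \neg\mathcal O_X(\mathcal A))\), and Lemma \ref{lem:TelgarskyCofinal} then yields \(\mathrm{cof}(\mathcal A;\mathcal A,\subseteq)\leq\omega\).

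Second, your construction of the separating map fails as written. The map \(\check{\mathbf 1}_{\{x_n\}}\) is not minimal cusco unless \(x_n\) is isolated: at a non-isolated point its value is \([0,1]\), so its graph properly contains the graph of the cusco map \(\mathbf 0\) (equivalently, \(\mathbf 1_{\{x_n\}}\) is not quasicontinuous there), and Lemma \ref{lem:UsefulFunction} requires the patch to be made along the closure of an \emph{open} set, not a singleton. Moreover, Corollary \ref{cor:AlmostLikeContinuous} requires closeness on a subset dense in \(X\), not in \(A_n\), and your construction in any case does not guarantee \(\langle \mathbf 0,\Psi\rangle \in \mathbf W(A_n,\varepsilon_n)\). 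The obstruction you correctly sensed is real: to produce \(\Psi\) that is \(\{0\}\) on \(A_n\) yet equal to \(\{1\}\) at \(x_n\) via Lemma \ref{lem:UsefulFunction} or Corollary \ref{cor:CuscoAgreeOpen}, one needs an open \(U \ni x_n\) with \(\mathrm{cl}(U)\cap A_n = \emptyset\), a regularity-type separation; this is precisely the role of \(\mathcal A\)-normality in Theorem \ref{thm:SecondTheorem} (through Corollary \ref{cor:CuscoAgreeOpen}), on which the paper's own proof of this corollary quietly leans despite the \(T_1\) phrasing. So your plan is recoverable only by supplying such a separation hypothesis (or by routing through the paper's game equivalences), not by the \(\check{\mathbf 1}_{\{x_n\}}\)/Corollary \ref{cor:AlmostLikeContinuous} patch.
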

\begin{proof}
    If \(\{ A_n : n \in \omega \} \subseteq \mathcal A\) is so that, for every \(A \in \mathcal A\), there is an \(n\in\omega\)
    with \(A \subseteq A_n\), notice that the family \(\{ \mathbf W(A_n,2^{-m}) : n,m \in \omega\}\) is a countable
    base for the uniformity on \(\mathrm{MC}_{\mathcal A}(X)\); so Theorem \ref{thm:UniformMetrizability} demonstrates
    that \(\mathrm{MC}_{\mathcal A}(X)\) is metrizable.

    Now, suppose \(\mathrm{MC}_{\mathcal A}(X)\) is metrizable, which implies that \(\mathrm{MC}_{\mathcal A}(X)\) is first-countable.
    Using a descending countable basis at \(\mathbf 0\), we see that
    \[\mathrm{I} \uparrow_{\mathrm{pre}} \mathsf G_1(\mathscr N_{\mathrm{MC}_{\mathcal A}(X),\mathbf 0} ,
    \neg \Gamma_{\mathrm{MC}_{\mathcal A}(X),\mathbf 0}),\]
    and, in particular,
    \[\mathrm{I} \uparrow_{\mathrm{pre}} \mathsf G_1(\mathscr N_{\mathrm{MC}_{\mathcal A}(X),\mathbf 0} ,
    \neg \Omega_{\mathrm{MC}_{\mathcal A}(X),\mathbf 0}).\]
    By Theorem \ref{thm:SecondTheorem}, we see that
    \[\mathrm{I} \uparrow_{\mathrm{pre}} \mathsf G_1(\mathscr N_X[\mathcal A], \neg \mathcal O_X(\mathcal A)).\]
    So, by Lemma \ref{lem:TelgarskyCofinal}, \(\mathrm{cof}(\mathcal A; \mathcal A, \subseteq) \leq \omega\).
\end{proof}
As a particular consequence of this, we see that
\begin{corollary} \label{cor:ParticularMetrizability}
    For any regular space \(X\), the following are equivalent.
    \begin{enumerate}[label=(\roman*)]
        \item
        \(X\) is countable.
        \item
        \(\mathrm{MC}_p(X)\) is metrizable.
        \item
        \(\mathrm{MC}_p(X)\) is not discretely selective.
        \item
        \(\mathrm{II} \uparrow_{\mathrm{mark}} \mathsf G_1(\Omega_X,\Omega_X)\).
        \item
        \(\mathrm{II} \uparrow_{\mathrm{mark}} \mathsf G_1(\Omega_{\mathrm{MC}_p(X),\mathbf 0},\Omega_{\mathrm{MC}_p(X),\mathbf 0})\).
        \item
        \(\mathrm{II} \uparrow_{\mathrm{mark}} \mathsf G_1(\mathscr D_{\mathrm{MC}_p(X)},\Omega_{\mathrm{MC}_p(X),\mathbf 0})\).
    \end{enumerate}
    Also, the following are equivalent.
    \begin{enumerate}[label=(\roman*)]
        \item
        \(X\) is hemicompact.
        \item
        \(\mathbb K(X)\) is hemicompact. (See \cite[Theorem 3.22]{CHCompactOpen}.)
        \item
        \(\mathrm{MC}_k(X)\) is metrizable. (See \cite[Cor 4.5]{HolaHoly2023}.)
        \item
        \(\mathrm{MC}_k(X)\) is not discretely selective.
        \item
        \(\mathrm{II} \uparrow_{\mathrm{mark}} \mathsf G_1(\mathcal K_X,\mathcal K_X)\).
        \item
        \(\mathrm{II} \uparrow_{\mathrm{mark}} \mathsf G_1(\Omega_{\mathrm{MC}_k(X),\mathbf 0},\Omega_{\mathrm{MC}_k(X),\mathbf 0})\).
        \item
        \(\mathrm{II} \uparrow_{\mathrm{mark}} \mathsf G_1(\mathscr D_{\mathrm{MC}_k(X)},\Omega_{\mathrm{MC}_k(X),\mathbf 0})\).
    \end{enumerate}
\end{corollary}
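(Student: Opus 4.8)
The plan is to read off both chains of equivalences by specializing the machinery already assembled to the ideals $\mathcal A=\mathcal B=[X]^{<\omega}$ for the first list and $\mathcal A=\mathcal B=K(X)$ for the second (with $X$ tacitly infinite, resp.\ non-compact, so that the standing convention that an ideal of closed sets omits the whole space is met). In both cases regularity of $X$ supplies the needed $\mathcal A$-normality: $X$ is $K(X)$-normal by the observation following the definition of $\mathcal A$-normality, and since $[X]^{<\omega}\subseteq K(X)$, $X$ is then also $[X]^{<\omega}$-normal. I also use that ``$\mathrm{MC}_{\mathcal A}(X)$ is not discretely selective'' is, by the remark following the definition of $\mathsf S_1$, the same as $\mathrm{I}\underset{\mathrm{pre}}{\uparrow}\mathsf G_1(\mathscr T_{\mathrm{MC}_{\mathcal A}(X)},\mathrm{CD}_{\mathrm{MC}_{\mathcal A}(X)})$.

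Everything pivots on the cardinal condition $\mathrm{cof}(\mathcal A;\mathcal A,\subseteq)\le\omega$, which unwinds to ``$X$ countable'' when $\mathcal A=[X]^{<\omega}$ and to ``$X$ hemicompact'' when $\mathcal A=K(X)$. First, the preceding corollary gives $\mathrm{cof}(\mathcal A;\mathcal A,\subseteq)\le\omega\iff\mathrm{MC}_{\mathcal A}(X)$ is metrizable; that is (i)$\iff$(ii) of the first list and (i)$\iff$(iii) of the second, and (i)$\iff$(ii) of the second list is \cite[Thm.~3.22]{CHCompactOpen}. Next, Lemma~\ref{lem:TelgarskyCofinal} gives $\mathrm{cof}(\mathcal A;\mathcal A,\subseteq)\le\omega\iff\mathrm{I}\underset{\mathrm{pre}}{\uparrow}\mathsf G_1(\mathscr N_X[\mathcal A],\neg\mathcal O_X(\mathcal A))$, and the pre/mark clause of the duality in Corollary~\ref{cor:Rothberger} rewrites the right side as $\mathrm{II}\underset{\mathrm{mark}}{\uparrow}\mathsf G_1(\mathcal O_X(\mathcal A),\mathcal O_X(\mathcal A))$; since $\mathcal O_X([X]^{<\omega})=\Omega_X$ and $\mathcal O_X(K(X))=\mathcal K_X$, this is (iv) of the first list and (v) of the second. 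The remaining game conditions on $\mathrm{MC}_{\mathcal A}(X)$, namely (v) and (vi) of the first list and (vi) and (vii) of the second, are obtained by reading the $\mathrm{II}\underset{\mathrm{mark}}{\uparrow}$ clause off the equivalences $\mathsf G_1(\mathcal O_X(\mathcal A),\mathcal O_X(\mathcal A))\equiv\mathsf G_1(\Omega_{\mathrm{MC}_{\mathcal A}(X),\mathbf 0},\Omega_{\mathrm{MC}_{\mathcal A}(X),\mathbf 0})\equiv\mathsf G_1(\mathscr D_{\mathrm{MC}_{\mathcal A}(X)},\Omega_{\mathrm{MC}_{\mathcal A}(X),\mathbf 0})$ of Corollary~\ref{cor:Rothberger}.

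It remains to attach the ``not discretely selective'' conditions, (iii) of the first list and (iv) of the second. For this I use the equivalence $\mathsf G_1(\mathscr N_X[\mathcal A],\neg\mathcal O_X(\mathcal A))\equiv\mathsf G_1(\mathscr T_{\mathrm{MC}_{\mathcal A}(X)},\mathrm{CD}_{\mathrm{MC}_{\mathcal A}(X)})$ furnished by Theorem~\ref{thm:SecondTheorem}; its $\mathrm{I}\underset{\mathrm{pre}}{\not\uparrow}$ clause says that $\mathrm{MC}_{\mathcal A}(X)$ is discretely selective iff $\mathrm{I}\underset{\mathrm{pre}}{\not\uparrow}\mathsf G_1(\mathscr N_X[\mathcal A],\neg\mathcal O_X(\mathcal A))$, hence by Lemma~\ref{lem:TelgarskyCofinal} iff $\mathrm{cof}(\mathcal A;\mathcal A,\subseteq)>\omega$, i.e.\ iff the pivot condition fails. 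For $\mathcal A=[X]^{<\omega}$ this is immediate, since finite sets are sequentially compact and Theorem~\ref{thm:SecondTheorem} then applies verbatim.

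The only step that is not pure bookkeeping is applying Theorem~\ref{thm:SecondTheorem} with $\mathcal B=K(X)$, because a compact set need not be sequentially compact and so the stated hypotheses of that theorem are not literally met for the second list. I would get past this by noting that in the proof of Theorem~\ref{thm:SecondTheorem} the hypothesis on $\mathcal B$ is invoked only through the conclusion of Lemma~\ref{lem:SequentiallyCompact}, that $\Phi[B]$ is bounded whenever $\Phi\in\mathrm{USCO}(X,\mathbb R)$ and $B\in\mathcal B$; and this is automatic for every compact $B$, since the image of a compact set under an usco map is compact: cover $\Phi[B]$ by open sets, replace each compact fibre $\Phi(x)$ by the union $V_x$ of a finite subcover, observe that $\{\Phi^\leftarrow(V_x):x\in B\}$ is an open cover of $B$, and take a finite subcover. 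Thus the conclusion of Theorem~\ref{thm:SecondTheorem} remains valid with $\mathcal B=K(X)$, the ``not discretely selective'' condition of the second list joins the chain, and assembling the four blocks yields both stated lists.
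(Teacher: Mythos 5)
Your proposal is correct and follows essentially the same route the paper leaves implicit (the corollary is stated as a ``particular consequence'' of the preceding metrizability corollary, Lemma \ref{lem:TelgarskyCofinal}, Corollary \ref{cor:Rothberger}, and Theorem \ref{thm:SecondTheorem}, specialized to \(\mathcal A = \mathcal B = [X]^{<\omega}\) and \(\mathcal A = \mathcal B = K(X)\)). Your extra observation that the sequential-compactness hypothesis of Theorem \ref{thm:SecondTheorem} is not literally satisfied by \(K(X)\), together with your patch that usco images of compact sets are compact and hence bounded (so Lemma \ref{lem:SequentiallyCompact} can be bypassed), correctly repairs a subtlety the paper glosses over.
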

\begin{theorem}[See {\cite[Cor 11]{CHContinuousFunctions}} and {\cite{TkachukCpBook}}] \label{thm:TkachukStrengthening}
    Let \(\mathcal A\) and \(\mathcal B\) be ideals of closed subsets of \(X\).
    Then
    \[\mathrm{I} \uparrow \mathsf G_1(\mathscr N_X[\mathcal A], \neg \mathcal O_X(\mathcal B))
    \iff \mathrm{I} \uparrow \mathsf G_1(\mathscr N_X[\mathcal A], \neg \Gamma_X(\mathcal B))\]
    and
    \[\mathrm{I} \uparrow_{\mathrm{pre}} \mathsf G_1(\mathscr N_X[\mathcal A], \neg \mathcal O_X(\mathcal B))
    \iff \mathrm{I} \uparrow_{\mathrm{pre}} \mathsf G_1(\mathscr N_X[\mathcal A], \neg \Gamma_X(\mathcal B))\]
\end{theorem}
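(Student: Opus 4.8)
The plan is to prove both biconditionals at once, and in each one implication is a triviality: every $\gamma$-$\mathcal B$-cover is a $\mathcal B$-cover, so $\Gamma_X(\mathcal B)\subseteq\mathcal O_X(\mathcal B)$ and hence $\neg\mathcal O_X(\mathcal B)\subseteq\neg\Gamma_X(\mathcal B)$; thus any (predetermined) strategy for One that forces Two's responses into $\Gamma_X(\mathcal B)$ a fortiori forces them out of $\neg\mathcal O_X(\mathcal B)$, so $\mathrm I\uparrow$ (resp.\ $\mathrm I\underset{\mathrm{pre}}{\uparrow}$) of the $\Gamma$-game implies the same for the $\mathcal O$-game. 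The content is the converse. Before attacking it in general I would record one clean special case: if $\mathrm{cof}(\mathcal A;\mathcal B,\subseteq)\leq\omega$, fix a countable $\{A_n:n\in\omega\}\subseteq\mathcal A$ cofinal for $\mathcal B$ and pass to the increasing sequence $A_n'=A_0\cup\cdots\cup A_n$ (still in $\mathcal A$, which is closed under finite unions); then the predetermined play $n\mapsto\mathscr N_X(A_n')$ forces every $B\in\mathcal B$ to lie in cofinitely many of Two's responses, so One wins the $\Gamma$-game predeterminedly. Together with Lemma \ref{lem:TelgarskyCofinal} this already settles the theorem when $\mathcal A$ consists of $G_\delta$ sets; the point of the general statement is that $\mathrm I\uparrow$ may hold for reasons not captured by $\mathrm{cof}\leq\omega$.

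For the general converse I would fix a winning strategy $\sigma$ for One in $\mathsf G_1(\mathscr N_X[\mathcal A],\neg\mathcal O_X(\mathcal B))$ and manufacture a winning $\sigma'$ for the $\Gamma$-game by amalgamating many copies of $\sigma$. Organize the copies along a tree, say $\omega^{<\omega}$: each branch is to carry one complete simulated play of the $\mathcal O$-game in which One follows $\sigma$, and each node carries one round. Fix a bookkeeping enumeration of $\omega^{<\omega}$ in which every node comes after all of its predecessors, so that at real round $n$ one may legitimately compute, for the node assigned to round $n$, the move $\sigma$ dictates there (it depends only on Two's earlier responses at predecessor nodes, already played). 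One's move $A_n$ in the real game is then the \emph{union} of the $\sigma$-moves of all nodes currently demanding a move — legal because $\mathcal A$ is closed under finite unions — and Two's single response $U_n\supseteq A_n$ is simultaneously a legal reply at each such node and is fed into all of them. Every branch then produces a completed $\sigma$-play, so along each branch Two's responses form an (necessarily infinite) $\mathcal B$-cover, whence by Lemma \ref{lem:IdealLargeCovers} ($\mathcal O_X(\mathcal B)=\Lambda_X(\mathcal B)$) each $B\in\mathcal B$ is trapped by infinitely many of Two's responses overall.

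The delicate step — and the one I expect to be the real obstacle — is to choose the tree and the bookkeeping so cleverly that the full list $\langle U_n:n\in\omega\rangle$ actually lands in $\Gamma_X(\mathcal B)$, i.e.\ so that each $B\in\mathcal B$ sits in \emph{cofinitely} many $U_n$, not merely in infinitely many; a naive schedule only delivers $\Lambda_X(\mathcal B)$. Upgrading ``infinitely often'' to ``cofinitely often'' is precisely the combinatorial bookkeeping worked out for the point-open game by Galvin \cite{Galvin1978} and Telg\'arsky \cite{Telgarsky1975} and, in the present generality of ideal covers, in \cite[Cor.\ 11]{CHContinuousFunctions} and \cite{TkachukCpBook}; I would reproduce that argument rather than re-derive it, leaning on finite-union-closure of both $\mathcal A$ and $\mathcal B$ and on the freedom to reindex the tree. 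The predetermined case then costs nothing extra: if $\sigma$ is predetermined, every simulated play's $\sigma$-move depends only on its internal round count, so $A_n$ depends only on $n$ and $\sigma'$ is predetermined. One may alternatively run the whole argument dually — by Lemmas \ref{lem:BasicEquivDual} and \ref{lem:DualGames}(i) the statement is equivalent to $\mathrm{II}\uparrow\mathsf G_1(\mathcal O_X(\mathcal A),\mathcal O_X(\mathcal B))\iff\mathrm{II}\uparrow\mathsf G_1(\mathcal O_X(\mathcal A),\Gamma_X(\mathcal B))$ together with its Markov-strategy counterpart — but the same ``$\Lambda$-to-$\Gamma$'' bookkeeping reappears there, so this route offers no shortcut around the main obstacle.
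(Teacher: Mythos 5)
The first thing to keep in mind is that the paper does not prove Theorem \ref{thm:TkachukStrengthening} at all: it is imported as a known result, with citations to \cite[Cor. 11]{CHContinuousFunctions} and \cite{TkachukCpBook}, and then used as a black box. So there is no internal argument to compare you against, and your decision to quote the Galvin--Telg\'arsky bookkeeping from exactly those sources puts you on the same footing as the paper. What you prove yourself is correct: the inclusion \(\Gamma_X(\mathcal B)\subseteq\mathcal O_X(\mathcal B)\) gives the easy implications, and your observation that \(\mathrm{cof}(\mathcal A;\mathcal B,\subseteq)\leq\omega\) yields a predetermined win for One in the \(\Gamma\)-game (pass to an increasing cofinal sequence using closure under finite unions) is sound. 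In fact you undersell it: combined with the \(T_1\) half of Lemma \ref{lem:TelgarskyCofinal} (the paper's standing Hausdorff assumption supplies \(T_1\)), \(\mathrm{I} \underset{\mathrm{pre}}{\uparrow} \mathsf G_1(\mathscr N_X[\mathcal A], \neg \mathcal O_X(\mathcal B))\) is equivalent to \(\mathrm{cof}(\mathcal A;\mathcal B,\subseteq)\leq\omega\), so your special case already settles the entire predetermined biconditional for all spaces considered, not just the case where \(\mathcal A\) consists of \(G_\delta\) sets; the \(G_\delta\) hypothesis is only needed to dispose of the full-information case by this route.

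The genuine gap is exactly where you place it: the implication \(\mathrm{I}\uparrow\mathsf G_1(\mathscr N_X[\mathcal A],\neg\mathcal O_X(\mathcal B))\implies\mathrm{I}\uparrow\mathsf G_1(\mathscr N_X[\mathcal A],\neg\Gamma_X(\mathcal B))\) when \(\mathcal A\) need not consist of \(G_\delta\) sets. Your tree amalgamation is legal (finitely many nodes per round, finite unions stay in \(\mathcal A\)), but as you concede it only forces each \(B\in\mathcal B\) into infinitely many of Two's responses, i.e.\ it lands in \(\Lambda_X(\mathcal B)=\mathcal O_X(\mathcal B)\) (Lemma \ref{lem:IdealLargeCovers}) --- which is no improvement, since a single unamalgamated run of the winning strategy \(\sigma\) already produces a set of responses in \(\mathcal O_X(\mathcal B)=\Lambda_X(\mathcal B)\). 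The passage from ``infinitely often'' to ``cofinitely often'' is the entire content of the theorem, and ``I would reproduce that argument rather than re-derive it'' is a citation, not a proof; note moreover that the source you would reproduce it from, \cite[Cor. 11]{CHContinuousFunctions}, is precisely the statement being proved, so as a self-contained argument the proposal is empty at its decisive step. Your dual reformulation via Lemmas \ref{lem:BasicEquivDual} and \ref{lem:DualGames} is correct but, as you say, relocates rather than removes the obstacle. In short: nothing you wrote is wrong, the predetermined half is (implicitly) complete, but the full-information converse is left to the very references the paper itself cites --- which makes your treatment no less complete than the paper's, yet not a proof.
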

\begin{corollary} \label{cor:WeakerThanMetrizability}
    For any regular space \(X\), the following are equivalent.
    \begin{enumerate}[label=(\roman*)]
        \item
        \(\mathrm{II} \uparrow \mathsf G_1(\Omega_X,\Omega_X)\).
        \item
        \(\mathrm{II} \uparrow \mathsf G_1(\Omega_{\mathrm{MC}_p(X),\mathbf 0},\Omega_{\mathrm{MC}_p(X),\mathbf 0})\).
        \item
        \(\mathrm{II} \uparrow \mathsf G_1(\mathscr D_{\mathrm{MC}_p(X)},\Omega_{\mathrm{MC}_p(X),\mathbf 0})\).
        \item
        \(\mathrm{I} \uparrow \mathsf G_1(\mathscr T_{\mathrm{MC}_p(X)}, \mathrm{CD}_{\mathrm{MC}_p(X)})\).
        \item
        \(\mathrm{I} \uparrow \mathsf G_1(\mathscr N_X[[X]^{<\omega}], \neg \Omega_X)\).
        \item
        \(\mathrm{I} \uparrow \mathsf G_1(\mathscr N_X[[X]^{<\omega}], \neg \Gamma_\omega(X))\).
    \end{enumerate}
    Also, the following are equivalent.
    \begin{enumerate}[label=(\roman*)]
        \item
        \(\mathrm{II} \uparrow \mathsf G_1(\mathcal K_X,\mathcal K_X)\).
        \item
        \(\mathrm{II} \uparrow \mathsf G_1(\Omega_{\mathrm{MC}_k(X),\mathbf 0},\Omega_{\mathrm{MC}_k(X),\mathbf 0})\).
        \item
        \(\mathrm{II} \uparrow \mathsf G_1(\mathscr D_{\mathrm{MC}_k(X)},\Omega_{\mathrm{MC}_k(X),\mathbf 0})\).
        \item
        \(\mathrm{I} \uparrow \mathsf G_1(\mathscr T_{\mathrm{MC}_k(X)}, \mathrm{CD}_{\mathrm{MC}_k(X)})\).
        \item
        \(\mathrm{I} \uparrow \mathsf G_1(\mathscr N_X[K(X)], \neg \mathcal K_X)\).
        \item
        \(\mathrm{I} \uparrow \mathsf G_1(\mathscr N_X[K(X)], \neg \Gamma_k(X)\).
    \end{enumerate}
\end{corollary}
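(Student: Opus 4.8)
The plan is to read off both lists from game equivalences, dualities, and strengthenings that are already established, after checking the hypotheses in the two cases. Throughout I take $\mathcal A=\mathcal B=[X]^{<\omega}$ for the first list and $\mathcal A=\mathcal B=K(X)$ for the second, and I use the identifications $\mathcal O_X([X]^{<\omega})=\Omega_X$, $\mathcal O_X(K(X))=\mathcal K_X$, $\Gamma_X([X]^{<\omega})=\Gamma_\omega(X)$, $\Gamma_X(K(X))=\Gamma_k(X)$, together with Lemma~\ref{lem:IdealLargeCovers}, which gives $\Lambda_X(\mathcal A)=\mathcal O_X(\mathcal A)$. Since $X$ is regular, it is $K(X)$-normal by the remark following the definition of $\mathcal A$-normality, and it is also $[X]^{<\omega}$-normal: given a finite $A\subseteq U$ with $U$ open, pick for each $a\in A$ an open $V_a\ni a$ with $\mathrm{cl}(V_a)\subseteq U$ and set $V=\bigcup_{a\in A}V_a$, which works because a finite union of closed sets is closed.

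With hypotheses in place I would assemble the four links. Corollary~\ref{cor:Rothberger} (applicable because $X$ is $\mathcal A$-normal) says the game $\mathcal G$ of item~(i) is equivalent to the games of items~(ii) and~(iii) and dual to the game $\mathcal H=\mathsf G_1(\mathscr N_X[\mathcal A],\neg\mathcal O_X(\mathcal B))$ of item~(v); the equivalence clauses yield (i)$\iff$(ii)$\iff$(iii) at the ``$\mathrm{II}\uparrow$'' level, and the duality clause $\mathrm{II}\uparrow\mathcal G\iff\mathrm I\uparrow\mathcal H$ yields (i)$\iff$(v). Theorem~\ref{thm:TkachukStrengthening} applied to the same ideals gives $\mathrm I\uparrow\mathsf G_1(\mathscr N_X[\mathcal A],\neg\mathcal O_X(\mathcal B))\iff\mathrm I\uparrow\mathsf G_1(\mathscr N_X[\mathcal A],\neg\Gamma_X(\mathcal B))$, that is (v)$\iff$(vi). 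And Theorem~\ref{thm:SecondTheorem} yields, in particular, $\mathsf G_1(\mathscr N_X[\mathcal A],\neg\mathcal O_X(\mathcal B))\equiv\mathsf G_1(\mathscr T_{\mathrm{MC}_{\mathcal A}(X)},\mathrm{CD}_{\mathrm{MC}_{\mathcal B}(X)})$; reading off the ``$\mathrm I\uparrow$'' level of this equivalence and using $\mathcal A=\mathcal B$ gives (iv)$\iff$(v). Combining the four links, all six items in each list are equivalent.

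The one delicate step, and the step I expect to be the main obstacle, is that Theorem~\ref{thm:SecondTheorem} was proved under the assumption that $\mathcal B$ consists of sequentially compact sets; for the first list this is automatic, but for the second $\mathcal B=K(X)$ can contain compact sets that are not sequentially compact. To apply Theorem~\ref{thm:SecondTheorem} in the compact case I would note that its proof uses the sequential-compactness hypothesis only through Lemma~\ref{lem:SequentiallyCompact}, and only to know that $\Phi[B]$ is bounded for the $B\in\mathcal B$ chosen there. But for any compact $B$ and any $\Phi\in\mathrm{USCO}(X,\mathbb R)$, the image $\Phi[B]$ is compact --- given an open cover of $\Phi[B]$, cover each compact $\Phi(x)$ by finitely many of its members, pull those back through $\Phi^{\leftarrow}$ to get an open cover of $B$, and pass to a finite subcover --- hence bounded. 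With ``$\Phi[B]$ bounded'' supplied in this way, the proof of Theorem~\ref{thm:SecondTheorem}, and so its full conclusion, carries over verbatim to $\mathcal B=K(X)$, and the second list then follows exactly as the first; everything else is routine bookkeeping with the definitions of equivalence and duality.
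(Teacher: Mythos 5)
Your proposal is correct and follows essentially the same route the paper intends: specializing Corollary \ref{cor:Rothberger} (with its duality clause), Theorem \ref{thm:SecondTheorem}, and Theorem \ref{thm:TkachukStrengthening} to \(\mathcal A=\mathcal B=[X]^{<\omega}\) and \(\mathcal A=\mathcal B=K(X)\). Your point about the sequential-compactness hypothesis of Theorem \ref{thm:SecondTheorem} when \(\mathcal B=K(X)\) is a genuine subtlety the paper passes over silently, and your repair (an usco image of a compact set is compact, hence bounded, which is all the proof uses) is exactly the right one.
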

In general, Corollaries \ref{cor:ParticularMetrizability} and \ref{cor:WeakerThanMetrizability} are strictly separate,
as the following example demonstrates.
\begin{example}
    Let \(X\) be the one-point Lindel{\"{o}}fication of \(\omega_1\) with the discrete topology,
    an instance of a Fortissimo space \cite[Space 25]{Counterexamples}.
    In \cite[Example 3.24]{CHCompactOpen}, it is shown that \(X\) has the property that
    \(\mathrm{II} \uparrow \mathsf G_1(\mathcal K_X,\mathcal K_X)\), but
    \(\mathrm{II} {\not\uparrow}_{\mathrm{mark}} \mathsf G_1(\mathcal K_X,\mathcal K_X)\).
    Since the compact subsets of \(X\) are finite, we see also that
    \(\mathrm{II} \uparrow \mathsf G_1(\Omega_X,\Omega_X)\), but
    \(\mathrm{II} {\not\uparrow}_{\mathrm{mark}} \mathsf G_1(\Omega_X,\Omega_X)\).
\end{example}

However, according to Theorem \ref{thm:PawlikowskiStuff}, if Two can win against predetermined strategies
in some Rothberger-like games, Two can actually win against full-information strategies in those games.
\begin{theorem} \label{thm:PawlikowskiStuff}
    Let \(X\) be any space.
    \begin{enumerate}[label=(\roman*)]
        \item
        By Pawlikowski \cite{Pawlikowski},
        \[\mathrm{I} \uparrow_{\mathrm{pre}} \mathsf G_1(\mathcal O_X , \mathcal O_X) \iff
        \mathrm{I} \uparrow \mathsf G_1(\mathcal O_X , \mathcal O_X).\]
        \item
        By Scheepers \cite{ScheepersIII} (see also \cite[Cor 4.12]{CHVietoris}),
        \[\mathrm{I} \uparrow_{\mathrm{pre}} \mathsf G_1(\Omega_X , \Omega_X) \iff
        \mathrm{I} \uparrow \mathsf G_1(\Omega_X , \Omega_X).\]
        \item
        By \cite[Theorem 4.21]{CHVietoris},
        \[\mathrm{I} \uparrow_{\mathrm{pre}} \mathsf G_1(\mathcal K_X , \mathcal K_X) \iff
        \mathrm{I} \uparrow \mathsf G_1(\mathcal K_X , \mathcal K_X).\]
    \end{enumerate}
\end{theorem}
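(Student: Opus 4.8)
The plan is to reduce each of the three biconditionals to the characterization of the relevant selection principle in terms of predetermined strategies for One, and then invoke the cited deterministic theorems; no new combinatorial work is needed beyond what those references provide. One implication is trivial in every case: a predetermined strategy is a special kind of full-information strategy, so \(\mathrm{I} \underset{\mathrm{pre}}{\uparrow} \mathsf G_1(\mathcal A,\mathcal B)\) always implies \(\mathrm{I} \uparrow \mathsf G_1(\mathcal A,\mathcal B)\). Thus the content of each part is the reverse implication \(\mathrm{I} \uparrow \implies \mathrm{I} \underset{\mathrm{pre}}{\uparrow}\), equivalently, by contraposition, \(\mathrm{I} \underset{\mathrm{pre}}{\not\uparrow} \implies \mathrm{I} \not\uparrow\).

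For the reduction, recall from \cite[Prop. 15]{ClontzDualSelection}, quoted above just after the definition of \(\mathsf S_1\), that \(\mathrm{I} \underset{\mathrm{pre}}{\not\uparrow} \mathsf G_1(\mathcal A,\mathcal B)\) holds precisely when the selection principle \(\mathsf S_1(\mathcal A,\mathcal B)\) holds. Hence for part (i) it suffices to show that the Rothberger property \(\mathsf S_1(\mathcal O_X,\mathcal O_X)\) implies \(\mathrm{I} \not\uparrow \mathsf G_1(\mathcal O_X,\mathcal O_X)\), which is exactly Pawlikowski's theorem \cite{Pawlikowski} (its converse being the trivial direction of \cite[Prop. 15]{ClontzDualSelection}). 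Part (ii) follows the same template with \(\mathsf S_1(\Omega_X,\Omega_X)\) in place of the Rothberger property, using Scheepers' theorem \cite{ScheepersIII} (see also \cite[Cor. 4.12]{CHVietoris}) that \(\mathsf S_1(\Omega_X,\Omega_X)\) implies One has no winning strategy in \(\mathsf G_1(\Omega_X,\Omega_X)\). Part (iii) is identical, invoking \cite[Thm. 4.21]{CHVietoris} for the \(k\)-cover version \(\mathsf S_1(\mathcal K_X,\mathcal K_X)\). In each case, combining this implication with \cite[Prop. 15]{ClontzDualSelection} yields \(\mathrm{I} \not\uparrow \iff \mathrm{I} \underset{\mathrm{pre}}{\not\uparrow}\), which is the claimed equivalence.

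The genuine difficulty is entirely contained in the three cited deterministic results. Pawlikowski's argument, and its \(\omega\)-cover and \(k\)-cover refinements, analyze the tree of finite plays generated by a hypothetical winning strategy \(\sigma\) for One and extract, whenever \(X\) satisfies the relevant selection principle, a single branch along which Two defeats \(\sigma\); reproducing that combinatorial extraction is the only obstacle to a self-contained proof. Granting those theorems, the assembly described above is routine.
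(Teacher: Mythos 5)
Your proposal is correct and matches the paper's treatment: the theorem is justified there purely by the cited results of Pawlikowski, Scheepers, and \cite[Thm.~4.21]{CHVietoris}, with exactly the routine assembly you describe (the trivial direction that a winning predetermined strategy is a winning full-information strategy, plus the identification \(\mathsf S_1(\mathcal A,\mathcal B) \iff \mathrm{I} \underset{\mathrm{pre}}{\not\uparrow} \mathsf G_1(\mathcal A,\mathcal B)\) from \cite[Prop.~15]{ClontzDualSelection}). Nothing further is needed.
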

\begin{corollary} \label{cor:BigRothberger}
    For any regular space \(X\), the following are equivalent.
    \begin{enumerate}[label=(\roman*)]
        \item
        \(X\) is \(\omega\)-Rothberger.
        \item
        \(X^{<\omega}\) is Rothberger, where \(X^{<\omega}\) is the disjoint union of \(X^n\) for all \(n \geq 1\).
        (See \cite{Sakai1988} and \cite[Cor 3.11]{CHVietoris}.)
        \item
        \(\mathcal{P}_{\mathrm{fin}}(X)\) is Rothberger, where \(\mathcal{P}_{\mathrm{fin}}(X)\)
        is the set \([X]^{<\omega}\) with the subspace topology inherited from \(\mathbb K(X)\).
        (See \cite[Cor 4.11]{CHVietoris}.)
        \item
        \(\mathrm{I} \not\uparrow \mathsf G_1(\Omega_X , \Omega_X)\).
        \item
        \(\mathrm{MC}_p(X)\) has strong countable fan-tightness at \(\mathbf 0\).
        \item
        \(\mathrm{MC}_p(X)\) has strong countable dense fan-tightness at \(\mathbf 0\).
        \item
        \(\mathrm{II} {\not\uparrow}_{\mathrm{mark}} \mathsf G_1(\mathscr N_X[[X]^{<\omega}], \neg \Omega_X)\).
        \item
        \(\mathrm{II} \not\uparrow \mathsf G_1(\mathscr N_X[[X]^{<\omega}], \neg \Omega_X)\).
        \item
        \(\mathrm{II} {\not\uparrow}_{\mathrm{mark}} \mathsf G_1(\mathscr T_{\mathrm{MC}_p(X)}, \mathrm{CD}_{\mathrm{MC}_p(X)})\).
        \item
        \(\mathrm{II} \not\uparrow \mathsf G_1(\mathscr T_{\mathrm{MC}_p(X)}, \mathrm{CD}_{\mathrm{MC}_p(X)})\).
        \item
        \(\mathrm{II} {\not\uparrow}_{\mathrm{mark}} \mathsf G_1(\mathscr N_{\mathrm{MC}_p(X),\mathbf 0}, \neg \Omega_{\mathrm{MC}_p(X),\mathbf 0})\).
        \item
        \(\mathrm{II} \not\uparrow \mathsf G_1(\mathscr N_{\mathrm{MC}_p(X),\mathbf 0}, \neg \Omega_{\mathrm{MC}_p(X),\mathbf 0})\).
    \end{enumerate}
    Also, the following are equivalent.
    \begin{enumerate}[label=(\roman*)]
        \item
        \(X\) is \(k\)-Rothberger.
        \item
        \(\mathrm{I} \not\uparrow \mathsf G_1(\mathcal K_X , \mathcal K_X)\).
        \item
        \(\mathrm{MC}_k(X)\) has strong countable fan-tightness at \(\mathbf 0\).
        \item
        \(\mathrm{MC}_k(X)\) has strong countable dense fan-tightness at \(\mathbf 0\).
        \item
        \(\mathrm{II} {\not\uparrow}_{\mathrm{mark}} \mathsf G_1(\mathscr N_X[K(X)], \neg \mathcal K_X)\).
        \item
        \(\mathrm{II} \not\uparrow \mathsf G_1(\mathscr N_X[K(X)], \neg \mathcal K_X)\).
        \item
        \(\mathrm{II} {\not\uparrow}_{\mathrm{mark}} \mathsf G_1(\mathscr T_{\mathrm{MC}_k(X)}, \mathrm{CD}_{\mathrm{MC}_k(X)})\).
        \item
        \(\mathrm{II} \not\uparrow \mathsf G_1(\mathscr T_{\mathrm{MC}_k(X)}, \mathrm{CD}_{\mathrm{MC}_k(X)})\).
        \item
        \(\mathrm{II} {\not\uparrow}_{\mathrm{mark}} \mathsf G_1(\mathscr N_{\mathrm{MC}_k(X),\mathbf 0}, \neg \Omega_{\mathrm{MC}_k(X),\mathbf 0})\).
        \item
        \(\mathrm{II} \not\uparrow \mathsf G_1(\mathscr N_{\mathrm{MC}_k(X),\mathbf 0}, \neg \Omega_{\mathrm{MC}_k(X),\mathbf 0})\).
    \end{enumerate}
\end{corollary}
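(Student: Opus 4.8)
The plan is to reduce every item on each list to one assertion about a covering game on \(X\) and then invoke the Pawlikowski--Scheepers-type upgrades recorded in Theorem \ref{thm:PawlikowskiStuff}. I will run the two lists in parallel, taking \(\mathcal A = \mathcal B = [X]^{<\omega}\) for the first and \(\mathcal A = \mathcal B = K(X)\) for the second (taking \(X\) infinite, resp. non-compact --- the cases with content). In each case \(X\) regular guarantees that \(X\) is \(\mathcal A\)-normal: for \(K(X)\) this is noted in the preliminaries, and for \([X]^{<\omega}\) it holds since, given a finite \(A\) inside an open \(U\), one picks for each \(a\in A\) an open \(V_a\) with \(a\in V_a\subseteq\mathrm{cl}(V_a)\subseteq U\) and sets \(V=\bigcup_{a\in A}V_a\). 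Writing \(\mathcal G=\mathsf G_1(\mathcal O_X(\mathcal A),\mathcal O_X(\mathcal A))\) --- which is \(\mathsf G_1(\Omega_X,\Omega_X)\), resp. \(\mathsf G_1(\mathcal K_X,\mathcal K_X)\) --- and \(\mathcal H=\mathsf G_1(\mathscr N_X[\mathcal A],\neg\mathcal O_X(\mathcal A))\), Corollary \ref{cor:Rothberger} supplies \(\mathcal G\equiv\mathsf G_1(\Omega_{\mathrm{MC}_{\mathcal A}(X),\mathbf 0},\Omega_{\mathrm{MC}_{\mathcal A}(X),\mathbf 0})\equiv\mathsf G_1(\mathscr D_{\mathrm{MC}_{\mathcal A}(X)},\Omega_{\mathrm{MC}_{\mathcal A}(X),\mathbf 0})\), \(\mathcal H\equiv\mathsf G_1(\mathscr N_{\mathrm{MC}_{\mathcal A}(X),\mathbf 0},\neg\Omega_{\mathrm{MC}_{\mathcal A}(X),\mathbf 0})\equiv\mathsf G_1(\mathscr T_{\mathrm{MC}_{\mathcal A}(X)},\neg\Omega_{\mathrm{MC}_{\mathcal A}(X),\mathbf 0})\), and the duality of \(\mathcal G\) with \(\mathcal H\); Theorem \ref{thm:SecondTheorem} moreover gives \(\mathcal H\equiv\mathsf G_1(\mathscr T_{\mathrm{MC}_{\mathcal A}(X)},\mathrm{CD}_{\mathrm{MC}_{\mathcal A}(X)})\).

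With these identifications the rest is bookkeeping. Using ``\(\mathsf S_1(\mathcal C,\mathcal D)\) holds iff \(\mathrm{I}\underset{\mathrm{pre}}{\not\uparrow}\mathsf G_1(\mathcal C,\mathcal D)\)'', the conditions ``\(X\) is \(\omega\)-Rothberger (resp. \(k\)-Rothberger)'' and ``\(\mathrm{MC}_p(X)\) (resp. \(\mathrm{MC}_k(X)\)) has strong countable, resp. strong countable dense, fan-tightness at \(\mathbf 0\)'' all translate, through the \(\equiv\)'s above, to \(\mathrm{I}\underset{\mathrm{pre}}{\not\uparrow}\mathcal G\); by Theorem \ref{thm:PawlikowskiStuff}(ii) (resp. (iii)) this is \(\mathrm{I}\not\uparrow\mathcal G\), i.e. the listed condition \(\mathrm{I}\not\uparrow\mathsf G_1(\Omega_X,\Omega_X)\) (resp. \(\mathrm{I}\not\uparrow\mathsf G_1(\mathcal K_X,\mathcal K_X)\)). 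Contraposing the duality of \(\mathcal G\) with \(\mathcal H\) turns \(\mathrm{I}\not\uparrow\mathcal G\) into \(\mathrm{II}\not\uparrow\mathcal H\) and \(\mathrm{I}\underset{\mathrm{pre}}{\not\uparrow}\mathcal G\) into \(\mathrm{II}\underset{\mathrm{mark}}{\not\uparrow}\mathcal H\); since the previous line identifies these two, the \(\equiv\)-class of \(\mathcal H\) spreads the property to the six remaining conditions --- the two for \(\mathsf G_1(\mathscr N_X[\mathcal A],\neg\mathcal O_X(\mathcal A))\), the two for Gruenhage's clustering \(W\)-game on \(\mathrm{MC}_{\mathcal A}(X)\) at \(\mathbf 0\), and the two for Tkachuk's closed-discrete selection game on \(\mathrm{MC}_{\mathcal A}(X)\). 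Finally, the two extra items on the first list --- Rothbergerness of \(X^{<\omega}\) and of \(\mathcal P_{\mathrm{fin}}(X)\) --- are equivalent to \(\omega\)-Rothbergerness of \(X\) by \cite{Sakai1988} and \cite{CHVietoris}, so they attach to the chain directly.

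The one hypothesis needing attention is the ``sequentially compact'' clause of Theorem \ref{thm:SecondTheorem} in the \(k\)-case: \(K(X)\) need not consist of sequentially compact sets. That clause, however, enters the proof of Theorem \ref{thm:SecondTheorem} only through Lemma \ref{lem:SequentiallyCompact}, which is used to see that \(\Phi[B]\) is bounded for the relevant \(\Phi\in\mathrm{USCO}(X,\mathbb R)\) and \(B\in\mathcal B\); when \(B\) is compact, \(\Phi[B]\) is the image of a compact set under an usco map, hence compact and so bounded, so the argument of Theorem \ref{thm:SecondTheorem} applies verbatim with \(\mathcal B=K(X)\). I expect this to be the only delicate point: everything else is a matter of chasing the established equivalences and dualities, with the genuinely non-trivial input --- that winning predetermined strategies for One promote to winning strategies for One in the \(\omega\)-Rothberger and \(k\)-Rothberger games --- imported directly from Theorem \ref{thm:PawlikowskiStuff}.
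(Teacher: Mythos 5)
Your proposal is correct and takes essentially the route the paper intends for this corollary, which is left as an immediate consequence of chaining Corollary \ref{cor:Rothberger}, Theorem \ref{thm:SecondTheorem}, the dualities of Lemma \ref{lem:DualGames}, the identification of \(\mathsf S_1\) with One lacking a winning predetermined strategy, and the strategy upgrades of Theorem \ref{thm:PawlikowskiStuff}, together with the cited results of \cite{Sakai1988} and \cite{CHVietoris} for items (ii) and (iii) of the first list. Your observation about the sequentially-compact hypothesis of Theorem \ref{thm:SecondTheorem} in the \(K(X)\) case is well taken and your patch is valid --- for compact \(B\) and usco \(\Phi\), the set \(\Phi[B]\) is compact, hence bounded, so the argument runs verbatim with \(\mathcal B = K(X)\) --- a point the paper applies without comment.
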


We end this section with a couple examples that demonstrate the difference between these two classes
of results.
\begin{example}
    The reals \(\mathbb R\) are hemicompact (and thus \(k\)-Rothberger) but not \(\omega\)-Rothberger.
    Indeed, since every \(\omega\)-Rothberger space is Rothberger by \cite{Sakai1988} and \(\mathbb R\)
    is not Rothberger, \(\mathbb R\) is not \(\omega\)-Rothberger.
    In particular, \(\mathrm{MC}_k(\mathbb R)\) is metrizable whereas \(\mathrm{MC}_p(\mathbb R)\)
    does not even have countable strong fan-tightness at \(\mathbf 0\).
\end{example}
\begin{example}
    The rationals \(\mathbb Q\) are countable (and thus \(\omega\)-Rothberger) but, by \cite[Prop 5]{AppskcoversII},
    not \(k\)-Rothberger.
    In particular, \(\mathrm{MC}_p(\mathbb Q)\) is metrizable whereas \(\mathrm{MC}_k(\mathbb Q)\) does not
    have countable strong fan-tightness at \(\mathbf 0\).
\end{example}

\section{Obstacles to generalization}

In this section, we discuss difficulties that arise in trying to extend Theorems \ref{thm:HolaHolyUscoChar} and \ref{thm:HolaHolyCuscoChar} to classes of usco maps other than minimal usco and cusco maps.

First, notice that not every selection of a minimal cusco map is quasicontinuous and subcontinuous, as suggested
by Theorem \ref{thm:HolaHolyCuscoChar}.
Since every usco map contains a minimal usco map, every usco map has some quasicontinuous and subcontinuous
selection.
However, the key to making sure that those selections from a minimal cusco map \(\Phi\)
bring one back to \(\Phi\) via closure and convex hull, as in Theorem \ref{thm:HolaHolyCuscoChar}, is convexity.
Without an analogous structure in place, there is no clear way to make sure a correspondence
of this type holds for other classes of usco maps.

Inspired by the operation of the convex hull, one my think similar additions may generate
interesting examples.
However, adding points to the vertical sections of a minimal usco map may create a graph which is not closed.
\begin{example} \label{example:FirstExample}
    Let \(a_n = \frac{n}{n+1}\), \(b_n = \frac{n+1}{n+2}\), and
    \(\mathrm{mid}_n = \frac{a_n + b_n}{2}\).
    Define the funtion \(f:(0,1) \to [0,2]\) by
    \[
    f(x)
    =\begin{cases}
        \frac{2}{b_n - a_n}(x - a_n) & x \in (a_n,\mathrm{mid}_n) \\
        2 & x \in [\mathrm{mid}_n,b_n] \\
    \end{cases}
    \]
    Then \(f\) is quasicontinuous and subcontinuous by Lemma \ref{lem:UsefulFunction}.
    Thus, \(\overline{f}\) is minimal usco.
    However,
    \[
    G = \overline{f}  \cup \left\{\left(x, \frac{\max\overline{f}(x) + \min\overline{f}(x)}{2} \right) : x \in [0,1] \right\},
    \]
    the map created by adding the midpoint of each vertical section, is not usco.
    Indeed, let \(\varepsilon > 0\) be small enough so that \(\frac{3}{2} \notin W := (-\varepsilon, 1+\varepsilon) \cup (2 - \varepsilon, 2 + \varepsilon)\). Then \(1 \in G^{\leftarrow}(W)\), but for every \(N\), there is an \(x \in (1 - \frac{1}{N},1]\) so that \(\frac{3}{2} \in G(x)\); thus \(x \notin G^{\leftarrow}(W)\), meaning this preimage is not open, and therefore \(G\) is not usco.
    The graphs of \(\overline{f}\) and \(G\) are in Figure \ref{fig:FirstExample}.
\end{example}
\begin{figure}
    \begin{tikzpicture}[scale=3.5]
        \draw [->] (-0.2,0) -- (1.2,0) node [below right] {\(x\)};
        \draw [->] (0,-0.2) -- (0,2.2) node [above right] {\(y\)};
        \foreach \n in {0,1,...,50}{
            \draw ({\n/(\n+1)},0) -- ({(1/2)*(\n/(\n+1) + (\n+1)/(\n+2))},1);
            \draw ({(1/2)*(\n/(\n+1) + (\n+1)/(\n+2))},2) -- ({(\n+1)/(\n+2)},2);
        }
        \draw (1,0) -- (1,1);
        \draw [fill=black] (1,2) circle (0.1pt);
    \end{tikzpicture}
    \begin{tikzpicture}[scale=3.5]
        \draw [->] (-0.2,0) -- (1.2,0) node [below right] {\(x\)};
        \draw [->] (0,-0.2) -- (0,2.2) node [above right] {\(y\)};
        \foreach \n in {0,1,...,50}{
            \draw ({\n/(\n+1)},0) -- ({(1/2)*(\n/(\n+1) + (\n+1)/(\n+2))},1);
            \draw [fill=black] ({(1/2)*(\n/(\n+1) + (\n+1)/(\n+2))},{3/2}) circle (0.1pt);
            \draw ({(1/2)*(\n/(\n+1) + (\n+1)/(\n+2))},2) -- ({(\n+1)/(\n+2)},2);
        }
        \draw (1,0) -- (1,1);
        \draw [fill=black] (1,2) circle (0.1pt);
    \end{tikzpicture}
    \caption{Adding midpoints}
    \label{fig:FirstExample}
\end{figure}
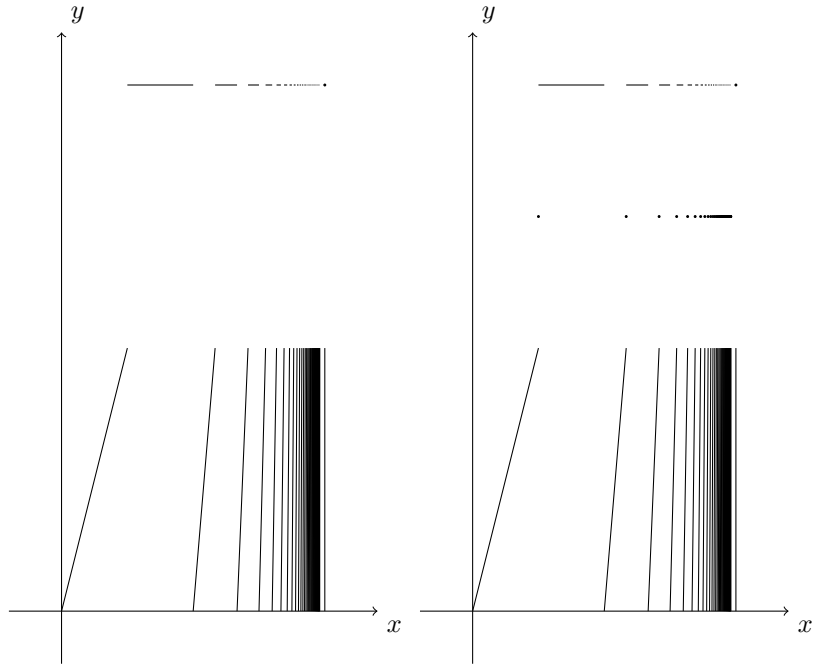

Note, however, by \cite{Christensen1982}, we can take the closure \(\overline{G}\) of the \(G\) defined in Example
\ref{example:FirstExample} to create an usco map since the resulting graph will be contained in the
graph of a cusco map.
Unfortunately, it's not clear what kind of structural facts could be used to ensure that quasicontinuous and
subcontinuous selections generate the same given map.
For example, one could add the singleton \(\{0\}\) to every section when mapping into the reals and, in the end,
there would be maps with distinct selections that are quasicontinuous and subcontinuous but that don't generate
the original map with the given procedure.
In the convex setting, one can use half-spaces to separate compact convex sets from convex sets to eventually
arrive at Theorem \ref{thm:HolaHolyCuscoChar}.
These tools offered by the convex setting do not adapt to the operation of adding midpoints as described above.

More broadly, we would like to find maps that complete the commuting diagram in Figure \ref{fig:Commuting}, but is not clear at this time if anything other than minimal usco (where \(\mathfrak m\) is the closure and \(\mathfrak e\) is the identity) and minimal cusco (where \(\mathfrak m\) is the pointwise convex hull of the closure and \(\mathfrak e\) is the pointwise convex hull) maps work.
\begin{figure}
    \begin{tikzpicture}
        \node (Q) at (0,2) {\(\mathcal{QS}(X,Y)\)};
        \node (M) at (0,0) {\(\mathrm{MU}(X,Y)\)};
        \node (U) at (4,0) {\(\mathrm{USCO}(X,Y)\)};
        \node (RM) at (0,-2) {\(\mathrm{MU}(U,Y)\)};
        \node (RU) at (4,-2) {\(\mathrm{USCO}(U,Y)\)};
        \draw [->] (Q) -- (M) node [midway,left] {\(\mathrm{cl}{\,}\mathrm{gr}\)};
        \draw [->] (Q) -- (U) node [midway,above] {\(\mathfrak{m}\)};
        \draw [->,dashed] (M) -- (U) node [midway,above] {\(\mathfrak e\)};
        \draw [->] (M) -- (RM) node [midway,left] {\(\restriction_U\)};
        \draw [->] (U) -- (RU) node [midway,right] {\(\restriction_U\)};
        \draw [->,dashed] (RM) -- (RU) node [midway,above] {\(\mathfrak e_U\)};
    \end{tikzpicture}
    \caption{Commuting diagram}
    \label{fig:Commuting}
\end{figure}
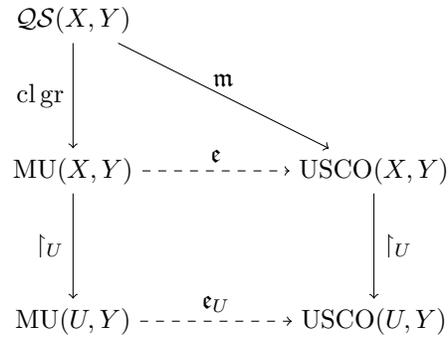

\section{Questions} \label{sec:Questions}

We end with a few questions.

\begin{question} \label{question:Generality}
    As mentioned after Corollary \ref{cor:Rothberger}, many of the equivalences here can be expanded
    to include \(\mathrm{MU}_{\mathcal A}(X)\) and, under the additional assumption that \(X\) is
    functionally \(\mathcal A\)-normal, even the appropriately topologized ring \(C_{\mathcal A}(X)\)
    of continuous real-valued function from \(X\).
    Is there a more general theory, perhaps relative to the set of quasicontinuous and subcontinuous real-valued
    functions, that unifies all of these results?
\end{question}
\begin{question}
    In general, one could define an operator \(\mathcal C : K(Y) \to K(Y)\) to apply to the outputs of minimal usco
    maps.
    In the cusco case, \(\mathcal C\) is the convex hull.
    For what kind of operators \(\mathcal C\) do we obtain analogues to Theorems \ref{thm:HolaHolyUscoChar} and
    \ref{thm:HolaHolyCuscoChar}?
\end{question}
\begin{question} \label{question:OtherFunctions}
    Are there other maps \(\mathfrak m\) as in Figure \ref{fig:Commuting} that make the diagram commute?
\end{question}
\begin{question} \label{question:Homogeneous}
    Can results similar to Theorems \ref{thm:FirstEquivalence} and \ref{thm:SecondTheorem} be established
    relative to \(\Omega_{\mathrm{MC}_{\mathcal A}(X),\Phi}\) for any \(\Phi \in \mathrm{MC}(X)\)?
\end{question}
 \begin{question}
    How many of the equivalences and dualities of this paper can be established for games of longer length
    and for finite-selection games?
 \end{question}
\begin{question}
    How much of this theory can be recovered when we study \(\mathrm{MC}(X,Y)\) for \(Y \neq \mathbb R\), for example,
    when \(Y\) is \([0,1]\), \(Y = \mathbb S^1\) (the circle group), or \(Y\) is a general Hausdorff locally convex linear space?
\end{question}

\providecommand{\bysame}{\leavevmode\hbox to3em{\hrulefill}\thinspace}
\providecommand{\MR}{\relax\ifhmode\unskip\space\fi MR }
\providecommand{\MRhref}[2]{%
  \href{http://www.ams.org/mathscinet-getitem?mr=#1}{#2}
}
\providecommand{\href}[2]{#2}

\end{document}